\newcommand{\Toz}{T^{1,0}}
\newcommand{\Tzo}{T^{0,1}}
\newcommand{\Lb}{\bar L}
\newcommand{\Lba}{ {\bar L}^* }
\newcommand{\Lbap}{ {\bar L}^{*,+} }
\newcommand{\Lbam}{ {\bar L}^{*,-} }
\newcommand{\dbarbsp}{\dbarb^{*,+}}
\newcommand{\dbarbsm}{\dbarb^{*,-}}
\newcommand{\dbarbspm}{\dbars_{b,\pm}}
\newcommand{\Qbpm}{Q_{b,\pm}}
\newcommand{\Qbpmp}{\Qbpm(\vp,\vp)}
\newcommand{\Qbp}{Q_{b,+}}
\newcommand{\Qbpp}{\Qbp(\vp,\vp)}
\newcommand{\Qbm}{Q_{b,-}}
\newcommand{\Qbmp}{\Qbm(\vp,\vp)}
\newcommand{\Qbo}{Q_{b,0}}
\newcommand{\ob}{\,\bar\omega}
\newcommand{\omb}{\bar\omega}
\newcommand{\Cp}{\mathcal{C}^+}
\newcommand{\Cm}{\mathcal{C}^-}
\newcommand{\Co}{\mathcal{C}^0}
\newcommand{\Cpn}{\mathcal{C}^+_{\nu}}
\newcommand{\Cmn}{\mathcal{C}^-_{\nu}}
\newcommand{\Con}{\mathcal{C}^0_{\nu}}
\newcommand{\Com}{\mathcal{C}^0_{\mu}}
\newcommand{\tCp}{\tilde{\mathcal{C}}^+}
\newcommand{\tCon}{\tilde{\mathcal{C}}^0_\nu}
\newcommand{\tCpm}{ {\tilde{\mathcal{C}}}^+_{\mu} }
\newcommand{\tCmm}{\tilde{\mathcal{C}}^-_\mu}
\newcommand{\psp}{\psi^+}
\newcommand{\psm}{\psi^-}
\newcommand{\pso}{\psi^0}
\newcommand{\pspl}{\psi^+_{A}}
\newcommand{\psml}{\psi^-_{A}}
\newcommand{\psol}{\psi^0_{A}}
\newcommand{\tpspl}{{\tilde\psi}^+_{A}}
\newcommand{\pspln}{\psi^+_{\nu,A}}
\newcommand{\tpsplm}{{\tilde\psi}^+_{\mu,A}}
\newcommand{\tpsmlm}{{\tilde\psi}^-_{\mu,A}}
\newcommand{\Pso}{\Psi^0}
\newcommand{\Pspl}{\Psi^+_{A}}
\newcommand{\Psml}{\Psi^-_{A}}
\newcommand{\Psol}{\Psi^0_{A}}
\newcommand{\tPspl}{{\tilde\Psi}^+_{A}}
\newcommand{\tPsol}{{\tilde\Psi}^0_{A}}
\newcommand{\Pspla}{ (\Psi^+_{A})^* }
\newcommand{\Psmla}{ (\Psi^-_{A})^* }
\newcommand{\Psola}{ (\Psi^0_{A})^* }
\newcommand{\tPspla}{ ({\tilde\Psi}^+_{A})^* }
\newcommand{\Pspln}{\Psi^+_{\nu,A}}
\newcommand{\Psmln}{\Psi^-_{\nu,A}}
\newcommand{\Psoln}{\Psi^0_{\nu,A}}
\newcommand{\tPsoln}{{\tilde\Psi}^0_{\nu,A}}
\newcommand{\Psplan}{ (\Psi^+_{\nu,A})^* }
\newcommand{\Psmlan}{ (\Psi^-_{\nu,A})^* }
\newcommand{\Psolan}{ (\Psi^0_{\nu,A})^* }
\newcommand{\Psplm}{\Psi^+_{\mu,A}}
\newcommand{\Psmlm}{\Psi^-_{\mu,A}}
\newcommand{\Psolm}{\Psi^0_{\mu,A}}
\newcommand{\tPsplm}{{\tilde\Psi}^+_{\mu,A}}
\newcommand{\tPsmlm}{{\tilde\Psi}^-_{\mu,A}}
\newcommand{\Psplam}{ (\Psi^+_{\mu,A})^* }
\newcommand{\Jt}{ {\mathcal{J}}_{\vartheta} }
\newcommand{\tJt}{ {}^t\!{\mathcal{J}}_{\vartheta} }
\newcommand{\lp}{{\lambda^{+}}}
\newcommand{\lm}{{\lambda^{-}}}
\newcommand{\zn}{\zeta_\nu}
\newcommand{\tzn}{ {\tilde\zeta_{\nu}} }
\newcommand{\zm}{\zeta_\mu}
\newcommand{\tzm}{ {\tilde\zeta_{\mu}} }
\newcommand{\tz}{{\tilde\zeta}}
\newcommand{\normlplm}{\norm_{\lp,\lm}}
\newcommand{\normpm}{\norm_{\pm}}
\newcommand{\sumn}{\sum_{\nu}}
\newcommand{\summ}{\sum_{\mu}}
\newcommand{\vpn}{\vp^\nu}
\newcommand{\la}{\langle}
\newcommand{\ra}{\rangle}
\newcommand{\ralplm}{\rangle_{\lp,\lm}}
\newcommand{\rapm}{\rangle_{\pm}}
\newcommand{\Alp}{A_{\lp}}
\newcommand{\Alm}{A_{\lm}}
\newcommand{\Apm}{A_{\pm}}
\newcommand{\Hpm}{\mathcal{H}_{\pm}}
\newcommand{\Hpmp}{{}^\perp\Hpm}
\newcommand{\sjkp}{s_{jk}^+}
\newcommand{\sjkm}{s_{jk}^-}
\newcommand{\Boxbpm}{\Box_{b,\pm}}
\newcommand{\Gpmq}{G_{q,\pm}}
\renewcommand{\H}{\mathcal{H}}
\newcommand{\Hp}{{}^\perp\H}
\newcommand{\CRPq}{$($CR-$P_q)$}
\newcommand{\CRPnq}{$($CR-$P_{n-1-q})$}
\newtheorem{thm}{Theorem}[section]
\newtheorem{prop}[thm]{Proposition}
\newtheorem{lem}[thm]{Lemma}
\newtheorem{cor}[thm]{Corollary}
\newtheorem{defn}[thm]{Definition}
\begin{document}

\title [Compactness of the Complex Green Operator]{Compactness of the Complex Green Operator on CR-Manifolds of Hypersurface Type}
\author{Andrew Raich}

\begin{abstract}  The purpose of this article is to study compactness of the complex Green operator on CR manifolds of hypersurface
type.   We introduce  \CRPq, a potential theoretic condition on $(0,q)$-forms that generalizes Catlin's property $(P_q)$ to CR manifolds of 
arbitrary  codimension. We prove 
that if an embedded CR-manifold of hypersurface type  of real dimension at least five satisfies
\CRPq and \CRPnq, then the complex Green operator is a compact operator on the Sobolev spaces $H^s_{0,q}(M)$ 
and $H^s_{0,n-1-q}(M)$, if $1\leq q \leq n-2$ and $s\geq 0$.
We use CR-plurisubharmonic functions to build a microlocal
norm that controls the totally real direction of the tangent bundle. 
\end{abstract}

\address{
Department of Mathematical Sciences \\ 1 University of Arkansas \\ SCEN 327 \\ Fayetteville, AR 72701}

\subjclass[2000]{32W10, (35N10, 32V20, 35A27)}

\keywords{}
\email{araich@uark.edu}

\maketitle

%
%
\section{Introduction and Results}
In this article, we introduce property \CRPq, a potential theoretic condition on $(0,q)$-forms.
We show that if an embedded CR-manifold of hypersurface type  satisfies
\CRPq and \CRPnq, then the complex Green operator is a compact operator on the Sobolev spaces $H^s_{0,q}(M)$ 
and $H^s_{0,n-1-q}(M)$ if $1\leq q \leq n-2$.
We use CR-plurisubharmonic functions to build a microlocal
norm that controls the ``bad" direction of the tangent bundle. We first prove the closed range and compactness results on 
$L^2_{0,q}(M)$ and use an elliptic regularization argument to pass to higher Sobolev spaces.

A CR-manifold of hypersurface type $M$ is the generalization to higher codimension
of the boundary of a pseudoconvex domain. Let $\Omega\subset\C^N$ be
a pseudoconvex domain and $H$ be a holomorphic function on the closure of $\Omega$. If $h$ is the boundary value of $H$, then
$h$ satisfies the tangential Cauchy-Riemann equations $\dbarb h=0$.  As with the Cauchy-Riemann operator, $\dbarb$ 
gives rise to a complex
that is a useful tool for analyzing the behavior of forms on and near the boundary. A
CR-manifold of hypersurface type is a 
$(2n-1)$-dimensional manifold that is locally equivalent to a hypersurface in $\C^n$. The tangential
Cauchy-Riemann operator $\dbarb$ can again be thought of as the restriction of $\dbar$ to $M$.

The $L^2$-theory of $\dbarb$ has been studied when $M$ is a CR-manifold of hypersurface type. When $M$ is the boundary of a
pseudoconvex domain, it is by now classical that $\dbarb$ has closed range \cite{Kohn86,Shaw85,BoSh86}. 
More recent work by Nicoara \cite{Nic06} shows the same result holds when $M$ a CR-manifold of hypersurface type. 
The approach to analyze $\dbarb$-problems proceeds down one of two paths. One is to follow Shaw's approach
and use $\dbar$-techniques and jump formulas, and the other path is to use Kohn's ideas and develop a microlocal
analysis to control the totally real or ``bad" direction of the tangent bundle.
When $M$ is not a hypersurface,
microlocal analysis seems to be a more natural  approach, and we will use this approach.

The method that we use to
solve the $\dbarb$-equation is to introduce the Kohn Laplacian $\Boxb= \dbarbs\dbarb + \dbarb\dbarbs$ and invert it.
The inverse (modulo its null space) is called the complex Green operator and denoted $G_q$ when it acts on
$L^2_{0,q}(M)$, and the canonical solution to $\dbarb u=f$ is given by $u= \dbarbs G_q f$ (assuming $f$ satisfies the
appropriate compatibility condition, e.g., $\dbarb f=0$ when $1\leq q\leq n-2$). 
Closed range of $\dbarb$ implies that $G_q$ exists and is bounded on $L^2$,  though geometric and potential
theoretic properties of $M$ can give $G_q$ much stronger regularity properties. These additional regularity properties, however,
have only been explored when $M=\bd\Omega$ is the boundary of a pseudoconvex domain. 
In this case, subellipticity of $G_q$ holds if and only if $M$ satisfies a curvature condition called finite
type (at the symmetric level $q$ and $n-1-q$)  \cite{Cat83,Cat87,Koh02, Nic07, Dia86, Koe04, RaSt08}.
Optimal subelliptic estimates (so called maximal estimates) were obtained in \cite{Koe02}
under the additional condition that all eigenvalues of the Levi form are comparable. 
This work unifies earlier results for strictly pseudoconvex domains and for domains of finite type in $\mathbb{C}^{2}$. 
For general domains, it is known that 
if $\Omega$ admits a defining function that is plurisubharmonic at points of the boundary,
then $G_q$ preserves the Sobolev spaces $H^s(\bd\Omega)$, $s\geq 0$ \cite{BoSt91}. 
A defining function is called plurisubharmonic at the boundary when its complex Hessian at points of the boundary is positive semidefinite in all directions. For example, all convex domains admit such defining functions. 

On a pseudoconvex domain $\Omega\subset\C^N$, the $\dbar$-Neumann operator is the inverse to the
$\dbar$-Neumann Laplacian $\Box = \dbar\dbars + \dbars\dbar$ on $L^2_{0,q}(\Omega)$. 
When $q=1$, a necessary and sufficient condition for subellipticity of the $\dbar$-Neumann operator on $\Omega$
is the existence of a plurisubharmonic function
whose complex Hessian blows up proportional to a reciprocal power of the distance to the boundary \cite{Cat83,Cat87,Str97}. 
In \cite{Cat84}, Catlin introduces a weakened version of complex Hessian blowup condition and instead requires only that
there exist plurisubharmonic functions with arbitrarily large complex Hessians. He calls this condition property (P) and
its natural generalization to $(0,q)$-forms, called
$(P_q)$,  is now a well known sufficient condition for compactness of the $\dbar$-Neumann operator  (see
\cite{FuSt01,Str06} for a discussion of compactness in the $\dbar$-Neumann problem). In \cite{RaSt08}, Emil Straube and I
show that if $M=\bd\Omega$ is the boundary of a smooth, bounded, pseudoconvex domain 
and satisfies $(P_q)$ and $(P_{n-1-q})$, then
$G_q$ is a compact operator on $L^2_{0,q}(M)$. We also show that compactness of $G_q$ implies compactness of the 
$\dbar$-Neumann operator on $(0,q)$-forms on $\Omega$ and if $\bd\Omega$ is locally convexifiable then $(P_q)$
and $(P_{n-1-q})$ is equivalent to compactness of $G_q$ (see \cite{FuSt98} as well).
Our methods involve $\dbar$-techniques, a jump formula
in the spirit of Shaw (and Boas) \cite{Shaw85,BoSh86}, and a detailed study of compactness of the $\dbar$-Neumann operator on 
the annulus between two pseudoconvex domains. Applying $\dbar$-techniques to investigate the complex Green operator in the
higher codimension case investigated in this article seems to be difficult if $q>1$  because it is unknown
if $(P_q)$ is invariant under CR-equivalences (or even biholomorphisms that are not conformal mappings) if $q>1$.

%
%
%
%
The goal of this article is to generalize the compactness result of \cite{RaSt08} to the case when $M$ is a CR-manifold
of hypersurface type. We introduce property \CRPq,
a generalization of $(P_q)$ for CR-manifolds of hypersurface type, and show that it is a sufficient
condition for compactness of the complex Green operator.

Let 
\[
\H^q = \{\vp\in L^2_{0,q}(M)\cap\Dom(\dbarb)\cap\Dom(\dbarbs) : \dbarb\vp=0, \dbarbs\vp=0\} 
\]
be the space of harmonic forms and
\[
\Hp^q = \{\vp\in L^2_{0,q}(M) : ( \vp,\phi)_0 =0,\text{ for all }\phi\in\H\}.
\]

Our main result is the following theorem.
\begin{thm}\label{thm:G_q is compact}
Let $M\subset \C^N$ be a smooth, compact, orientable weakly pseudoconvex CR-manifold of hypersurface type 
of real dimension at least five 
that satisfies \CRPq and \CRPnq. If $1\leq q\leq n-2$ and $s\geq 0$,
then 
\renewcommand{\labelenumi}{(\roman{enumi})}
\begin{enumerate}
\item $\dbarb$ and $\dbarbs$ acting on $H^s_{0,q}(M)$ have closed range,
\item the complex Green operator $G_q$ exists and is a compact operator on $H^s_{0,q}(M)$,
\item $\H^q$ is finite dimensional.
\end{enumerate}
\end{thm}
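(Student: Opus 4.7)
The plan is to prove the $s=0$ case of (i)--(iii) by establishing a microlocal compactness estimate, and then bootstrap to $s>0$ by elliptic regularization, as indicated in the introduction. Following Kohn's approach, I would first fix a coordinate atlas on $M$ in which a purely imaginary Reeb-type vector field transverse to $\Toz\oplus\Tzo$ is available, and partition the cotangent variables into three closed conic regions: $\Cp$ on which the symbol of this vector field is bounded below by a positive multiple of $|\xi|$, $\Cm$ on which it is bounded above by a negative multiple, and $\Co$ the elliptic remainder. I would choose pseudodifferential cutoffs $\psp+\psm+\pso\equiv 1$ supported in the respective regions, together with slightly larger $\tpsp,\tpsm,\tpso$ that are identically one on the supports of $\psp,\psm,\pso$. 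For every $A>0$, hypothesis \CRPq produces a CR-plurisubharmonic function $\lp$ whose complex Hessian dominates $A$ in the directions relevant to $(0,q)$-forms, and \CRPnq symmetrically produces $\lm$; together these define a microlocal weighted inner product
\[
\la\vp,\phi\ralplm = \la e^{-\lp}\tpsp\vp,\tpsp\phi\ra + \la e^{\lm}\tpsm\vp,\tpsm\phi\ra + \la\tpso\vp,\tpso\phi\ra,
\]
with associated weighted quadratic form $\Qbpmp$ computed using the adjoints $\dbarbsp,\dbarbsm$ in the respective weights.

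The heart of the argument will be a weighted basic estimate in the spirit of Kohn--Morrey--H\"ormander: for $\vp\in\Hp^q\cap\Dom(\dbarb)\cap\Dom(\dbarbs)$,
\[
A\bigl(\|\tpsp\vp\|^2 + \|\tpsm\vp\|^2\bigr) + \|\tpso\vp\|_{1/2}^2 \lesssim \Qbpmp + C_A\|\vp\|_{-1}^2.
\]
On $\Co$ this is immediate from ellipticity of $\Box_b$ off the characteristic variety; on $\Cp$ and $\Cm$ I would run the weighted integration-by-parts identity, whose commutator terms produce positive contributions from the complex Hessians of $\lp$ and $\lm$, bounded below by $A$ by construction of the CR-plurisubharmonic weights. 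After controlling the pseudodifferential commutators $[\dbarb,\tpsp]$, $[\dbarbs,\tpsm]$ and their analogues --- they are of order zero and microlocally supported on the overlaps of the three regions, where their effect is absorbed into the elliptic piece --- and comparing the weighted and unweighted $L^2$ norms, choosing $A=1/\epsilon$ yields the unweighted compactness estimate
\[
\|\vp\|^2 \leq \epsilon\bigl(\|\dbarb\vp\|^2 + \|\dbarbs\vp\|^2\bigr) + C_\epsilon\|\vp\|_{-1}^2
\]
for all $\vp\in\Hp^q\cap\Dom(\dbarb)\cap\Dom(\dbarbs)$. Since the inclusion $L^2(M)\hookrightarrow H^{-1}(M)$ on the compact manifold $M$ is compact, standard functional analysis then delivers $L^2$-closed range of $\dbarb$ and $\dbarbs$, finite-dimensionality of $\H^q$, and compactness of $G_q$ on $L^2_{0,q}(M)$; the same argument run on $(0,n-1-q)$-forms, with the roles of \CRPq and \CRPnq swapped, handles $G_{n-1-q}$.

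To pass from $s=0$ to $s>0$, I would use elliptic regularization: for $\delta>0$ augment $\Box_b$ by $\delta\Lambda^{2}$ with a self-adjoint elliptic first-order $\Lambda$, solve the resulting fully elliptic problem to obtain smooth canonical solutions $\vp_\delta$, and rerun the microlocal argument with up to $s$ tangential derivatives inserted. The cutoffs, the CR-plurisubharmonic weights, and the commutator identities all survive tangential differentiation at the cost of strictly lower-order errors, so the resulting $H^s$-bounds on the regularized Green operators $G_q^\delta$ are uniform in $\delta$; letting $\delta\to 0^+$ and passing to a weak limit recovers $G_q$ on $H^s_{0,q}(M)$, with compactness inherited from the limiting form of the compactness estimate. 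The main obstacle is the weighted basic estimate on $\Cm$, where the weight $e^{\lm}$ points the ``wrong'' way and its Hessian must be shown to still contribute positively once the microlocal sign of the Reeb direction is taken into account --- this is precisely why \emph{both} hypotheses are needed: the duality between $\Cp$ and $\Cm$ at the symbol level matches the Hodge-style duality between $(0,q)$- and $(0,n-1-q)$-forms, so \CRPq controls the $\Cp$ piece and \CRPnq controls the $\Cm$ piece.
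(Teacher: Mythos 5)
Your proposal follows essentially the same route as the paper: the microlocal decomposition into $\Cp$, $\Cm$, $\Co$, the weighted norm built from $e^{-\lp}$ and $e^{\lm}$ with $\lp$ tied to \CRPq on the $\Cp$ piece and $\lm$ tied to \CRPnq on the $\Cm$ piece, the weighted basic estimate with the elliptic region absorbed separately, the choice of arbitrarily large CR-plurisubharmonicity constant to get the unweighted compactness estimate and hence closed range, finite-dimensional $\H^q$, and compactness of $G_q$ at $s=0$, followed by elliptic regularization and a $\delta\to 0$ limit for $s>0$. The paper fills in the steps you compress (sharp G{\aa}rding to exploit the Levi form against $T$ microlocally, the multilinear algebra where the dimension-at-least-five hypothesis enters, and the equivalence of the weighted and unweighted norms uniformly in $A$), but the architecture is the same.
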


The assumption that $1\leq q\leq n-2$ excludes the endpoints $q=0$ and $q=n-1$. For the endpoint case, it is not clear
what (CR-$P_0$) should be. However, one can check (in analogy to the $\dbar$-Neumann problem) that
$G_0 = \dbarbs G_1^2 \dbarb = \dbarbs G_1 (\dbarbs G_1)^*$,
and thus it follows that (CR-$P_1$) is a sufficient condition 
for compactness of $G_0$ (and $G_{n-1}$ as well).  The requirement that the dimension of $M$ is at least five is a seemingly
technical assumption concerning the eigenvalues of a Hermitian matrix. In particular, 
 and $H=(h_{jk})$ is a Hermitian, positive definite matrix,
$1\leq i,k\leq n-1$, then $(\delta_{jk}\sum_{\ell=1}^{n-1}h_{\ell\ell}-h_{jk})$ is a Hermitian, positive definite matrix if $n\geq 3$. This
fact is false when $n=2$, and this causes the three dimensional case to remain open.

The symmetric requirements at level $q$ and $n-1-q$ are necessary \cite{Koe04,RaSt08, Koh81}. 
To a $(0,q)$-form $u$ on $\bd\Omega$, 
there is an associated $(0,n-1-q)$-form $\tilde{u}$ 
(obtained through a modified Hodge-$*$  construction) such that $\|u\| \approx \|\tilde{u}\|$,  $\overline{\partial}_{b}\tilde{u}=(-1)^{q}\widetilde{(\overline{\partial}^{*}_{b}u)}$, and $\overline{\partial}_{b}^{*}\tilde{u}=(-1)^{q+1}\widetilde{(\overline{\partial}_{b}u)}$, modulo terms that are $O(\|u\|)$. Consequently, a compactness estimate holds for $(0,q)$-forms if and only if the corresponding estimate holds for $(0,n-1-q)$-forms. In view of the characterization of compactness on convex domains \cite{FuSt98}, 
such a symmetry between form levels is absent in the $\overline{\partial}$-Neumann problem. 
(The analogous construction performed for forms on $\Omega$ yields a form $\tilde{u}$ that 
in general is not in the domain of $\dbars$.)

A consequence of Theorem \ref{thm:G_q is compact} and Corollary \ref{cor:psh implies CR psh} is the following generalization
of Theorem 1.4 in \cite{RaSt08}.
\begin{cor}\label{cor:G_q is compact with P_q}
Let $M\subset \C^N$ be a smooth, compact, orientable weakly pseudoconvex CR-manifold of hypersurface type
that satisfies $(P_q)$. Then
$M$ satisfies \CRPq. In particular, if $M$ satisfies
$(P_q)$ and $(P_{n-1-q})$ and is of real dimension at least five, then the
conclusions of Theorem \ref{thm:G_q is compact} hold.
\end{cor}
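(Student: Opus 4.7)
The plan is to reduce the corollary directly to Theorem \ref{thm:G_q is compact} by converting $(P_q)$ data into \CRPq\ data. The work splits into two small pieces: using the already-cited Corollary \ref{cor:psh implies CR psh} to pass from ordinary plurisubharmonic functions on a neighborhood of $M$ to CR-plurisubharmonic functions on $M$, and then an elementary linear-algebra argument to transfer the eigenvalue lower bound on the complex Hessian to the analogous bound in the CR setting.

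First, I would fix $M_{0}>0$ and invoke $(P_q)$ to produce a smooth plurisubharmonic function $\lambda$ on a neighborhood of $M$ with $0\leq \lambda\leq 1$ whose complex Hessian in $\C^{N}$ satisfies $\sum_{k=1}^{q} \mu_{k}(\partial\bar\partial\lambda)\geq M_{0}$ at every point of $M$, where the $\mu_{k}$ denote the eigenvalues listed in increasing order. Corollary \ref{cor:psh implies CR psh} then produces a CR-plurisubharmonic function on $M$ with the same sup-norm bound, whose CR-Hessian is controlled from below by the complex Hessian of $\lambda$ restricted to $T^{1,0}M$.

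Second, the complex Hessian $\partial\bar\partial\lambda$ is a Hermitian form on $T^{1,0}_{p}\C^{N}$ of complex dimension $N$, whereas the quantity tested in \CRPq\ is a related Hermitian form on $T^{1,0}_{p}M$ of complex dimension $n-1\leq N$. The Cauchy interlacing theorem for restrictions of Hermitian matrices guarantees that the $k$-th smallest eigenvalue of the restriction is bounded below by the $k$-th smallest eigenvalue of the ambient form. Summing the interlacing inequalities for $k=1,\ldots,q$ yields the required lower bound $\geq M_{0}$ for the sum of the $q$ smallest eigenvalues in the CR setting, and since $M_{0}$ was arbitrary this is precisely \CRPq. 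The symmetric argument applied to $(P_{n-1-q})$ gives \CRPnq.

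With \CRPq\ and \CRPnq\ in hand, the real-dimension hypothesis is unchanged, and Theorem \ref{thm:G_q is compact} applies verbatim to produce closed range of $\dbarb$ and $\dbarbs$ on $H^{s}_{0,q}(M)$, compactness of $G_q$ on $H^{s}_{0,q}(M)$, and finite-dimensionality of $\H^q$. I do not anticipate any serious obstacle beyond confirming that Corollary \ref{cor:psh implies CR psh} preserves the eigenvalue information cleanly; if the passage from $\lambda$ to its CR counterpart introduces torsion-type error terms from the embedding of $T^{1,0}M$ in $T^{1,0}\C^{N}$, these can be absorbed into the arbitrary constant $M_{0}$, so they pose no genuine difficulty.
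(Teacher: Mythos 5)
Your argument is correct and is essentially the paper's own: the paper proves the corollary precisely by combining property $(P_q)$ with Corollary \ref{cor:psh implies CR psh} (whose proof rests on Proposition \ref{prop:dbar vs. dbar_b on M} and the multilinear algebra of Lemma \ref{lem:multilinear algebra}, which already supplies the restriction-to-$T^{1,0}(M)$ step you obtain via Cauchy interlacing) and then invoking Theorem \ref{thm:G_q is compact}. One small correction to your closing remark: the error term $\tfrac 12\nu\{\lambda\}\langle d\gamma, L\wedge\bar L\rangle$ from Proposition \ref{prop:dbar vs. dbar_b on M} depends on $\lambda$ itself and so cannot be ``absorbed into the arbitrary constant $M_0$'' chosen in advance; it is instead compensated by the $A_0\, d\gamma$ term in Definition \ref{defn:q CR psh} (with $A_0$ permitted to depend on $\lambda$) together with weak pseudoconvexity, which is exactly what Corollary \ref{cor:psh implies CR psh} encapsulates, so your reliance on that corollary keeps the proof intact.
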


I would like to thank Emil Straube for his suggestion to investigate this problem and for many 
helpful discussions. I would also like to acknowledge Siqi Fu and thank him for some insightful conversations regarding
\CRPq.

%
%
\section{Definitions and Notation}

\subsection{CR-Manifolds and the tangential Cauchy-Riemann operator $\dbarb$}

\begin{defn}\label{defn:CR mfld} 
Let $M\subset \C^N$ be a smooth manifold of real dimension $2n-1$. The \textbf{CR-structure on $M$} is given by a
complex subbundle $\Toz(M)$ of the complexified tangent bundle $T(M)\otimes \C$ that satisfies the following conditions:
\begin{enumerate}\renewcommand{\labelenumi}{(\roman{enumi})}
\item The complex dimension of each fiber of $\Toz(M)$ is $n-1$ for all $p\in M$;
\item If we define $\Tzo(M) = \overline{\Toz(M)}$, then $\Toz(M)\cap \Tzo(M) = \{0\}$;
\item If $L, L'\in \Toz(M)$ are two vector fields defined near $M$, then their commutator
$[L,L'] = LL'-L'L$ also an element of $\Toz(M)$.
\end{enumerate}
A manifold $M$ endowed with a CR-structure is called a \textbf{CR-manifold}.
\end{defn}
In the case that $M$ is a submanifold of $\C^N$, then for each $z\in\C^N$, set 
$\Toz_z(M) = \Toz_z(\C^N)\cap T_z(M)\otimes\C$ (under the natural inclusions). If the complex dimension
of $\Toz_{z}(M)$ is $n-1$ for all $z\in M$, we can then let
$\Toz(M) =\bigcup_{z\in M} T_{z}^{1,0}(M)$, and this defines the \textbf{\emph{induced CR-structure on $M$}}. Observe that conditions
(ii) and (iii) are automatically satisfied in this case.

For the remainder of this article, $M$ is a smooth, orientable 
CR-manifold of real dimension $2n-1$ embedded $\C^N$ for some $N\geq n$. 
Let $B^q(M)=\bigwedge^q(\Tzo(M))$ (the bundle of $(0,q)$ forms that consists of skew-symmetric multilinear maps of 
$\Tzo(M)^q$ into $\C$). We can
therefore choose our Riemannian metric to be the restriction on  $T(M)\otimes\C$ of the usual Hermitian inner product on
$\C^N$. 
We can define a Hermitian inner product on $B^q(M)$ by
\[
(\vp,\psi) = \int_M \langle \vp,\psi\rangle_x\, dV,
\]
where $dV$ is the volume element on $M$ and $\langle \vp,\psi \rangle_x$ is the induced inner product on $B^q(M)$.
This metric is compatible with the induced CR-structure, i.e., the vector spaces $\Toz_z(M)$ and $\Tzo_z(M)$
are orthogonal under the inner product.

The involution condition (iii) of Definition \ref{defn:CR mfld} means that there is a restriction of the de Rham exterior derivative
$d$ to $B^q(M)$, which we denote by $\dbarb$. The inner product gives rise to an $L^2$-norm $\| \cdot \|_0$, and
we also denote the closure of $\dbarb$ in this norm by $\dbarb$ (by an abuse of notation). 
In this way, $\dbarb : L^2_{0,q}(M)\to L^2_{0,q+1}(M)$ is a well-defined, closed, densely defined operator, and we define
$\dbarbs:L^2_{0,q+1}(M)\to L^2_{0,q}(M)$ to be the $L^2$-adjoint of $\dbarb$. The Kohn
Laplacian $\Boxb:L^2_{0,q}(M) \to L^2_{0,q}(M)$ is defined as
\[
\Boxb = \dbarbs\dbarb + \dbarb\dbarbs,
\]
and its inverse on $(0,q)$-forms (up to $\null(\Boxb)$) is called the \textbf{\emph{complex Green operator}} and denoted by $G_q$.

The induced CR-structure has a local basis $L_1,\dots, L_{n-1}$ for the $(1,0)$-vector fields in a neighborhood $U$ of each
point $x\in M$. Let $\omega_1,\dots,\omega_{n-1}$ be the dual basis of $(1,0)$-forms that satisfy $\langle \omega_j, L_k\rangle 
=\delta_{jk}$. Then $\Lb_1,\dots, \Lb_{n-1}$ is a local basis for the $(0,1)$-vector fields with dual basis $\ob_1,\dots,\ob_{n-1}$
in $U$. Also, $T(U)$ is spanned by $L_1,\dots,L_{n-1}$, $\Lb_1,\dots,\Lb_{n-1}$ and one more vector $T$ taken to be 
purely imaginary (so $\bar T = -T$). Let $\gamma$ be the purely imaginary global $1$-form on $M$ that
annihilates $\Toz(M)\oplus \Tzo(M)$ and is normalized so that $\langle \gamma, T \rangle =-1$.

\begin{defn}\label{defn:Levi form}
The \textbf{Levi form at a point $x\in M$} is the Hermitian form given by $\langle d\gamma_x, L\wedge \bar L'\rangle$
where $L, L'\in \Toz_x(U)$, $U$ a neighborhood of $x\in M$. We call $M$ \textbf{weakly pseudoconvex} if there exists a
form $\gamma$ such that the Levi form is positive semi-definite at all $x\in M$ and \textbf{strictly pseudoconvex} if there
is a form $\gamma$ such that the Levi form is positive definite at all $x\in M$.
\end{defn}

\subsection{Property \CRPq and CR-plurisubharmonic functions}


\begin{defn} A smooth function $\vp:\Omega\to\C$ is called \textbf{plurisubharmonic on $(0,q)$-forms} if the 
sum of any $q$ eigenvalues of the complex Hessian of $\vp$ at $z\in\Omega$ is at least $C\geq 0$. The constant $C$ is the {constant of
plurisubharmonicity}(of $\vp$ at $z$). 
\end{defn}

\begin{defn} A surface $S \subset \R^k$ satisfies \textbf{property $($P${}_{q})$} if for every $C>0$, there exists a function $\vp$ and a neighborhood
$U\supset S$ so that $0\leq \vp \leq 1$ and $\vp$
is plurisubharmonic on $(0,q)$-forms on $U$ with plurisubharmonicity constant $C$.
\end{defn}

As discussed above, property $(P_q)$ has played a crucial role in the development of the compactness theory for the $\dbar$-Neumann operator
and now we define its analog for the compactness theory of the complex Green operator on CR-manifolds of hypersurface type.

\begin{defn}\label{defn:q CR psh}
Let $M$ be a CR-manifold. A real-valued $\mathcal{C}^\infty_c$ function $\lam$ defined in a neighborhood of $M$ is called
\textbf{strictly CR-plurisubharmonic on $(0,q)$-forms} if there exist constants $A_0, A_\lam>0$ so that for any orthonormal 
$Z_j\in \Toz(M)$, $1\leq j\leq q$,
\[
\sum_{j=1}^q \Big\la \frac 12\Big( \p_b\dbarb\lam - \dbarb\p_b\lam\Big) + A_0d\gamma, Z_j\wedge \ZZ_j\Big\ra
\geq A_\lam
\]
where $d\gamma$ is the invariant expression of the Levi form. $\lam$ is called \textbf{weakly CR-plurisubharmonic 
on $(0,q)$-forms} if $A_\lam\geq 0$. $A_\lam$ is called the \textbf{CR-plurisubharmonicity constant}.
\end{defn}
CR-plurisubharmonic functions were first introduced by Nicoara \cite{Nic06} to prove closed range of $\dbarb$ on CR manifolds
of hypersurface type.

\begin{defn} A surface $S \subset \R^k$ satisfies \textbf{property $($CR-P${}_{q})$}
if for every $A>0$, there exists a function $\lam$ and a neighborhood
$U\supset S$ so that $0\leq \lam \leq 1$ and $\lam$
is CR-plurisubharmonic on $(0,q)$-forms on $U$ with CR-plurisubharmonicity constant $A$.
\end{defn}
Appendix \ref{app:multilinear algebra} contains results multilinear algebra that help to explain the relationship of the definitions
of $(P_q)$ and \CRPq.

In this article, constants with no subscripts may depend on $n$, $N$, $M$ but not the CR-plurisubharmonic functions 
$\lp$, $\lm$, or any quantities associated with
$\lp$ or $\lm$. Those constants will be denoted with an,  $\lp$, $\lm$, or $\pm$ in the subscript. The constant $A$ will be reserved
the constant in the construction of pseudodifferential operators in Section \ref{sec:local coordinates} (though $A$ with subscripts will not).

%
%
\section{Computations in Local Coordinates}\label{sec:local coordinates}

\subsection{Local coordinates and CR-plurisubharmonicity} 

The microlocal analysis that we will use relies the existence of suitable local coordinates. The first such result is
Lemma 3.2 from \cite{Nic06}, recorded here as the following result.
\begin{lem}\label{lem:good coordinates}
Let $M$ be a compact smooth, $(2n-1)$-dimensional weakly pseudoconvex CR-manifold of hypersurface type embedded in a complex
space $\C^N$ such that $N\geq n$ and endowed with an induced CR-structure. For each point
$P\in M$, there exists
a  neighborhood $U$ so that $M\cap U$ is CR-equivalent to a hypersurface in $\C^n$. Additionally, on $U$ there is 
a local orthonormal basis $L_1,\dots,L_n$, $\Lb_1,\dots,\Lb_n$ of the $n$-dimensional
complex bundle containing $TM$:\\
(i) $L_j\big|_P = \frac{\p}{\p w_j}$ for $1\leq j\leq n$ where $(w_1,\dots,w_N)$ are the coordinates of $\C^N$, and \\
(ii) $[L_j,\Lb_k]\big|_P = c_{jk}T$ where $T = L_n-\Lb_n$ and $c_{jk}$ are the coefficients of the Levi form
in $L_1,\dots,L_{n-1},\Lb_1,\dots,\Lb_{n-1},T$, a local basis for $TM$.
\end{lem}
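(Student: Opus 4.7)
My approach is three-phase: first normalize coordinates linearly in $\C^N$ to realize $M$ locally as a hypersurface in $\C^n$, then construct natural candidate basis vectors from a defining function, and finally orthonormalize in a way that preserves the desired relations at $P$.

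First, I would use that $w_1,\ldots,w_N$ restrict to CR functions on $M$. Since $\Toz_P(M)$ has complex dimension $n-1$ and $T_P(M)$ has real dimension $2n-1$, a complex-linear change of coordinates in $\C^N$, followed by a unitary rotation preserving the induced Hermitian inner product, lets us arrange that $\Toz_P(M)$ is spanned by $\partial/\partial w_1,\ldots,\partial/\partial w_{n-1}$, that the remaining real direction in $T_P(M)$ is $\partial/\partial y_n$ with $w_n=x_n+iy_n$, and that $w_{n+1},\ldots,w_N$ are transverse to $M$ at $P$. The projection $(w_1,\ldots,w_N)\mapsto(w_1,\ldots,w_n)$ then restricts on a neighborhood of $P$ to a CR-diffeomorphism between $M\cap U$ and a smooth real hypersurface $\widetilde M\subset\C^n$, because this projection is holomorphic on $\C^N$ and its differential is injective on $T_PM$ by the coordinate alignment. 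This proves the CR-equivalence assertion.

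Working next with $\widetilde M=\{\rho=0\}$ and $P=0$, I would make a further biholomorphic normalization so that $d\rho(0)=dx_n$, i.e.\ $(\partial\rho/\partial w_n)(0)=\tfrac{1}{2}$ and $(\partial\rho/\partial w_j)(0)=0$ for $j<n$. Natural candidate basis vectors are
\[
L_j=\frac{\partial}{\partial w_j}-\frac{\partial\rho/\partial w_j}{\partial\rho/\partial w_n}\,\frac{\partial}{\partial w_n}\quad (j<n),\qquad L_n=\Bigl(2\,\frac{\partial\rho}{\partial\bar w_n}\Bigr)^{-1}\frac{\partial}{\partial w_n},
\]
so that $L_1,\ldots,L_{n-1}\in\Toz(\widetilde M)$, $L_n$ is transverse with $T:=L_n-\Lb_n$ real and tangent to $\widetilde M$ at $0$, and each $L_j|_P=\partial/\partial w_j$. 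To orthonormalize, I would note that the Gram matrix of $(L_1,\ldots,L_{n-1})$ equals $\delta_{jk}+g_j\bar g_k$ with $g_j:=(\partial\rho/\partial w_j)/(\partial\rho/\partial w_n)$ vanishing at $0$, and hence equals the identity to second order at $P$. Gram--Schmidt therefore modifies $L_j$ only by vector fields whose coefficients vanish to second order at $P$, which preserves both the values $L_j|_P$ and the commutators $[L_j,\Lb_k]|_P$; this yields property (i).

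Property (ii) then follows from a short direct computation: expanding $[L_j,\Lb_k]$ using the explicit formulas for $L_j$ and $\Lb_k$ and setting $w=0$, the identities $g_j(0)=\bar g_k(0)=0$ kill the majority of the terms, leaving
\[
[L_j,\Lb_k]\big|_0=(\partial_{\bar k}g_j)(0)\,\frac{\partial}{\partial w_n}-(\partial_j\bar g_k)(0)\,\frac{\partial}{\partial \bar w_n}=2\rho_{j\bar k}(0)\,\Bigl(\frac{\partial}{\partial w_n}-\frac{\partial}{\partial\bar w_n}\Bigr)=c_{jk}T,
\]
where $c_{jk}:=2\rho_{j\bar k}(0)$ are the Levi form coefficients in the basis $(L_1,\ldots,L_{n-1},\Lb_1,\ldots,\Lb_{n-1},T)$ at $P$. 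The main obstacle I anticipate is ensuring that the orthonormalization does not introduce spurious $\Toz\oplus\Tzo$ components into this commutator at $P$; the second-order vanishing of the Gram defect is precisely what resolves this, since it forces the Gram--Schmidt corrections to have zero first derivatives at $P$ and hence to contribute nothing to commutators evaluated there.
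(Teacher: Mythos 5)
First, a caveat on the comparison: the paper gives no proof of this lemma at all—it is quoted as Lemma 3.2 of \cite{Nic06}—so your argument is measured against Nicoara's, and your outline (unitary alignment of coordinates at $P$, projection of $M\cap U$ onto a hypersurface in $\C^n$, a frame built from a defining function, Gram--Schmidt controlled by orders of vanishing) is essentially that standard route; in particular the computation $[L_j,\Lb_k]\big|_0=2\rho_{j\bar k}(0)\big(\partial/\partial w_n-\partial/\partial\bar w_n\big)$ is correct, as is the observation that a Gram defect of size $O(|w|^2)$ leaves both the values and the commutators at $P$ unchanged. The first genuine gap is your choice of $L_n$. With $L_n=(2\rho_{\bar w_n})^{-1}\partial/\partial w_n$ one gets $T\rho=(\rho_{w_n}^2-\rho_{\bar w_n}^2)/(2|\rho_{w_n}|^2)$, which vanishes at $P$ but not on a neighborhood (near $P$ it vanishes precisely where $\rho_{w_n}$ is real), so $T=L_n-\Lb_n$ is tangent to $\{\rho=0\}$ only at the point $P$. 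The lemma, however, asserts that $L_1,\dots,L_{n-1},\Lb_1,\dots,\Lb_{n-1},T$ form a \emph{local} basis of $TM$, and the paper immediately uses $T\in TM$ and $\nu=L_n+\Lb_n$ as the real normal as vector fields near $P$ (Proposition \ref{prop:dbar vs. dbar_b on M} and the commutator expansion in Section 4). The repair is to take $L_n$ proportional to the $(1,0)$ gradient, $L_n=|\partial\rho|^{-1}\sum_k\rho_{\bar w_k}\,\partial/\partial w_k$, for which $L_n-\Lb_n$ is tangent and $L_n+\Lb_n$ is normal everywhere on the hypersurface, while still $L_n|_P=\partial/\partial w_n$, so your computation at $P$ survives. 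Relatedly, your orthonormalization treats only $L_1,\dots,L_{n-1}$: the inner products $\langle L_j,L_n\rangle$ vanish merely to first order at $P$, which is still enough to preserve $L_n|_P$ (and (ii) involves only $j,k\le n-1$ together with the value of $T$ at $P$), but the Gram--Schmidt step for $L_n$ must be included and this point argued rather than left implicit.

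The second gap is that the frame claimed in the lemma lives upstairs: condition (i) is phrased in the coordinates $(w_1,\dots,w_N)$ of $\C^N$, and orthonormality is with respect to the metric induced from $\C^N$ (this is exactly how the frame is used in the proof of Proposition \ref{prop:dbar vs. dbar_b on M}, where each $L_j$ is expanded in all $N$ ambient coordinate fields). Your construction and orthonormalization are carried out on the projected hypersurface in $\C^n$ with the Euclidean metric of $\C^n$; the CR equivalence is not an isometry away from $P$, so orthonormality does not transfer, and your $L_n$, not being tangent, cannot even be transported by the CR map. This is fixable: pull back $L_1,\dots,L_{n-1}$ and the corrected, tangent $T$ through the projection, define $L_n$ upstairs as the $(1,0)$ part of the corresponding purely imaginary tangent field (equivalently, build it from the totally real direction $X$ and $JX$), and re-run Gram--Schmidt in the $\C^N$ metric; since $T_zM$ tilts out of the $w_1,\dots,w_n$ directions only to first order in $|z-P|$, the upstairs Gram matrix is again $I+O(|z-P|^2)$, so your vanishing-order argument applies verbatim. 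But as written, the proposal establishes the statement for the hypersurface model rather than the statement actually quoted, and it does not address this transfer.
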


The local coordinates from Lemma \ref{lem:good coordinates} allow us to make a careful comparison of the Levi form
with its $\dbarb$-analog. 
\begin{prop}\label{prop:dbar vs. dbar_b on M}
Let $M$ be as in Lemma \ref{lem:good coordinates}. If $\lambda$ is a smooth function near $M$, 
$L\in \Toz(M)$, and $\nu = L_n + \Lb_n$ is the ``real normal" to $M$,
then on $M$,
\[
\left\la\frac12\Big(\p\dbar\lam-\dbar\p\lam\Big), L\wedge\Lb\right\ra
- \left\la\frac12\Big(\p_b\dbarb\lam-\dbarb\p_b\lam\Big), L\wedge\Lb\right\ra
= \frac 12\nu\{\lam\} \la d\gamma, L\wedge \bar L\ra
\]
\end{prop}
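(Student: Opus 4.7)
The plan is to work locally in the coordinates of Lemma~\ref{lem:good coordinates}, which identifies $M\cap U$ with a hypersurface in $\mathbb{C}^n$ and provides an orthonormal frame $L_1,\dots,L_n$, $\bar L_1,\dots,\bar L_n$ of the complexified tangent bundle of $\mathbb{C}^n$ with $T^{1,0}M = \mathrm{span}\{L_1,\dots,L_{n-1}\}$, $T = L_n - \bar L_n$ tangent to $M$, and $\nu = L_n + \bar L_n$ the real normal. The key observation is that the ambient splitting $T\mathbb{C}^n\otimes\mathbb{C} = T^{1,0}(\mathbb{C}^n)\oplus T^{0,1}(\mathbb{C}^n)$ and the tangential splitting $TM\otimes\mathbb{C} = T^{1,0}M\oplus T^{0,1}M\oplus \mathbb{C}T$ disagree only in how the real line $\mathbb{R}T$ is placed: the ambient splitting distributes $T = L_n - \bar L_n$ across the $(1,0)$ and $(0,1)$ pieces, while the tangential splitting keeps $T$ separate. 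All of the discrepancy between the two Hessians paired with $L\wedge\bar L$ should come from this single fact.

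Since $L,\bar L$ are tangent, $[L,\bar L]$ is tangent, so I would first decompose
\[
[L,\bar L] = V + \bar W + cT, \qquad V\in T^{1,0}M,\ \bar W\in T^{0,1}M,
\]
and use Cartan's formula $d\gamma(L,\bar L) = -\gamma([L,\bar L])$ together with $\gamma(T) = -1$ and $\gamma|_{T^{1,0}M\oplus T^{0,1}M}=0$ to identify $c = \langle d\gamma, L\wedge \bar L\rangle$, which is precisely the Levi-form factor on the right-hand side. Next, I would apply Cartan's formula $d\omega(X,Y) = X\omega(Y) - Y\omega(X) - \omega([X,Y])$ to $\omega = \bar\partial\lambda$ and to $\omega = \partial\lambda$ in $\mathbb{C}^n$, using $d(\bar\partial\lambda) = \partial\bar\partial\lambda$ and $d(\partial\lambda) = \bar\partial\partial\lambda$. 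Because $T = L_n - \bar L_n$, the ambient bi-type decomposition of the commutator reads $V + cL_n$ in $T^{1,0}(\mathbb{C}^n)$ and $\bar W - c\bar L_n$ in $T^{0,1}(\mathbb{C}^n)$, and these are the pieces picked up by $\bar\partial\lambda$ and $\partial\lambda$ respectively. For the tangential side I would extend $\bar\partial_b\lambda$ to a $1$-form on $M$ via $\bar\partial_b\lambda = d\lambda - \partial_b\lambda - (T\lambda)\theta$ (with $\theta$ dual to $T$ and annihilating $T^{1,0}M\oplus T^{0,1}M$), so that the extension vanishes on both $T^{1,0}M$ and $T$; similarly for $\partial_b\lambda$. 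Then Cartan's formula applied to these extended $1$-forms sees only the $T^{0,1}M$-piece $\bar W$ (respectively the $T^{1,0}M$-piece $V$) of $[L,\bar L]$, since the $cT$ component is killed.

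Subtracting the two differences, every term involving $L\bar L\lambda$, $\bar L L\lambda$, $V$, and $\bar W$ cancels, leaving exactly $c(L_n + \bar L_n)\lambda = c\,\nu\{\lambda\}$; dividing by two and substituting $c = \langle d\gamma, L\wedge\bar L\rangle$ yields the asserted identity. The step I expect to require the most care is verifying that the extended-$1$-form computation really reproduces the intrinsic $(1,1)$-piece $\partial_b\bar\partial_b\lambda(L,\bar L)$: the remaining type components of $d(\bar\partial_b\lambda)$ either involve two $T^{0,1}M$-slots or wedge with $\theta$, and neither contributes when contracted against $L\wedge\bar L$ with $L\in T^{1,0}M$ and $\bar L\in T^{0,1}M$. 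Once this intrinsic/extrinsic identification is in place, the rest is bookkeeping that tracks how $T = L_n - \bar L_n$ distributes the coefficient $c$ of $T$ differently across the ambient and tangential decompositions.
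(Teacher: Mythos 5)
Your proposal is correct, and it is a cleaner, more invariant organization of essentially the same local computation. The paper's proof expands $\partial\bar\partial\lambda$ and $\bar\partial\partial\lambda$ in the dual frame, computes the structure-equation terms $\langle\partial\bar\omega_\ell, L_j\wedge\bar L_k\rangle$ and $\langle\bar\partial\omega_\ell, L_j\wedge\bar L_k\rangle$, and (unnecessarily for this statement, but usefully for intuition) relates $L_j\bar L_k\lambda + \bar L_k L_j\lambda$ to the ambient Hessian $\partial^2\lambda/\partial w_j\partial\bar w_k$ via the $\mathbb{C}^N$ coordinate expansion $L_j = \sum_\ell a^j_\ell\partial/\partial w_\ell$; then it performs the identical calculation with sums truncated at $n-1$ for the tangential side and subtracts. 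Your route instead isolates the invariant mechanism at work: the decomposition $[L,\bar L] = V + \bar W + cT$ with $c = \langle d\gamma, L\wedge\bar L\rangle$, combined with Cartan's formula, shows that the ambient bi-type decomposition sends $cT = cL_n - c\bar L_n$ partly to $(1,0)$ and partly to $(0,1)$, whereas the tangential extension of $\bar\partial_b\lambda$ and $\partial_b\lambda$ kills $T$ outright; everything else (the $L\bar L\lambda$, $\bar L L\lambda$, $V\lambda$, $\bar W\lambda$ terms) cancels in the difference, leaving $\tfrac12 c\,\nu\{\lambda\}$. A small bonus of your version is that it does not rely on the normalization $[L_j,\bar L_k]|_P = c_{jk}T$ from Lemma~\ref{lem:good coordinates} (i.e., the vanishing of the $T^{1,0}M\oplus T^{0,1}M$ part of the commutator at the center point); only the identity $T = L_n - \bar L_n$ and $\gamma(T)=-1$ are needed, since the $V,\bar W$ pieces cancel anyway. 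Your concern about the intrinsic/extrinsic identification is well-placed but resolves exactly as you say: pairing against $L\wedge\bar L$ with $L\in T^{1,0}M$ kills the $(0,2)$ and $\theta$-wedge components of $d(\bar\partial_b\lambda)$, so Cartan's formula for the extended form reproduces $\partial_b\bar\partial_b\lambda(L,\bar L)$.
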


\begin{proof}
Using Lemma \ref{lem:good coordinates}, there exists a basis of $\C T(\C^N)$ given by
$L_1,\dots,L_N,\Lb_1,\dots,\Lb_N$ so that $L_1,\dots,L_{n-1}$ and $\Lb_1,\dots,\Lb_{n-1}$ are a basis of
$\Toz(M)$ and $\Tzo(M)$, respectively, $T = L_n-\Lb_n\in TM$ is a purely imaginary tangent vector, and
$\nu = L_n+\Lb_n$ is the ``real normal" tangent vector to $M$. Let $\omega_1,\dots,\omega_N, \omb_1,\dots,\omb_N$
be the dual cotangent vectors to $L_1,\dots,L_N,\Lb_1,\dots,\Lb_N$, respectively. Assume that the coordinates are centered
around $P\in M$ in sense of Lemma \ref{lem:good coordinates}.

Recall that $\p\dbar = -\dbar\p$, so $\p\dbar = \frac 12(\p\dbar - \dbar\p)$. We now compute
\begin{align}
\p\dbar\lam = \p(\sum_{k=1}^N \Lb_k\lam\ob_k) \label{eqn:d dbar}
&= \sum_{j,k=1}^N L_j\Lb_k\lam\,\omega_j\wedge\omb_k +  \sum_{\ell=1}^N\Lb_\ell\lam \,\p\omb_\ell 
\end{align}
Also,
\begin{align} \label{eqn:dbar d}
\dbar\p\lam = \dbar(\sum_{j=1}^N L_j\lam\,\omega_j) 
&= -\sum_{j,k=1}^N \Lb_k L_j\lam\,\omega_j\wedge\omb_k +  \sum_{\ell=1}^N L_\ell\lam \,\dbar\omega_\ell
\end{align}

Let $L = \sum_{j=1}^{n-1} \xi_j L_j$ be a complex tangent vector on $M$. Then
\begin{align*}
\langle \dbar\omega_\ell, L_j \wedge \bar L_k \rangle \big|_P
&= L_j\{ \la \omega_\ell , \bar L_k \ra \}\big|_P - \bar L_j \{ \langle \omega_\ell, L_k\ra \} \big|_P
- \langle \omega_\ell, [L_j,\Lb_k] \rangle\big|_P \\
&= - \la(\omega_\ell, c_{jk} T\ra\big|_P= -\delta_{\ell n} c_{jk}(P).
\end{align*}
Similarly, since $T= L_n-\Lb_n$,
\begin{align*}
\langle \p\omb_\ell, L_j \wedge \bar L_k \rangle \big|_P
&= L_j\{ \la \omb_\ell , \bar L_k \ra \}\big|_P - \bar L_j \{ \langle \omb_\ell, L_k\ra \} \big|_P
- \langle \omb_\ell, [L_j,\Lb_k] \rangle\big|_P \\
&= - \la\omb_\ell, c_{jk} T\ra\big|_P= \delta_{\ell n} c_{jk}(P).
\end{align*}
Consequently, for $1\leq j,k\leq n-1$,
\[
\la \sum_{\ell=1}^N\big( \Lb_\ell\lam \,\p\omb_\ell - L_\ell\lam \,\p\omb_\ell \big), L_j \wedge \Lb_k\ra\big|_P
= (\Lb_n\{\lam\}+ L_n\{\lam\}) c_{jk}(P) = c_{jk}(P) \nu(\lam)\big|_P
\]
If $K = \sum_{j=1}^N \xi_j L_j + \sum_{k=1}^N \zeta_k \Lb_k$, then
\[
\la \omega_j\wedge\omb_k,K\wedge \bar K\ra = \omega_j(K)\omb_k(\bar K) - \omega_j (\bar K)\omb_k(K) 
= \xi_j\bar\xi_k - \bar\zeta_j\zeta_k.
\]
Putting the equations together, for $L = \sum_{j=1}^{n-1}\xi_j L_j$, we have that
\[
\la\p\dbar\lam, L\wedge\Lb \ra\Big|_P = \left\la\frac12\Big(\p\dbar\lam-\dbar\p\lam\Big), L\wedge\Lb\right\ra\bigg|_P
=\frac 12\sum_{j,k=1}^{n-1}\Big[ L_j\Lb_k\lam + \Lb_k L_j\lam + \nu(\lam)\, c_{jk} \Big]\xi_j \bar\xi_k\bigg|_P. 
\]
To understand $L_j\Lb_k\lam + \Lb_k L_j\lam$, we expand the vector fields in the ambient $\C^N$ coordinates. In coordinates,
\[
L_j = \sum_{\ell=1}^N a_\ell^j \frac{\p}{\p w_\ell}.
\]
This means
\[
L_j\Lb_k  - \sum_{\ell,\ell'=1}^N a_\ell^j \bar a^k_{\ell'} \frac{\p^2}{\p w_\ell \p\w_{\ell'}}
=\sum_{\ell,\ell'=1}^N a^j_\ell \frac{\p \bar a^k_{\ell'}}{\p w_\ell} \frac{\p}{\p\w_{\ell'}} \in \Tzo(\C^N)
\]
and
\[
\Lb_k L_j  - \sum_{\ell,\ell'=1}^N a_\ell^j \bar a^k_{\ell'} \frac{\p^2}{\p w_\ell \p\w_{\ell'}}
=\sum_{\ell,\ell'=1}^N  \bar a^k_{\ell'} \frac{\p a^j_\ell}{\p \w_{\ell'}} \frac{\p}{\p w_{\ell}}\in \Toz(\C^N)
\]
Since $[L_j,\Lb_k]\big|_P = L_j\Lb_k - \Lb_k L_j\big|_P = c_{jk} T\big|_P = c_{jk}(L_n-\Lb_n)\big|_P$, it follows that
\[
\sum_{\ell,\ell'=1}^N  \bar a^k_{\ell'} \frac{\p a^j_\ell}{\p \w_{\ell'}} \frac{\p}{\p w_{\ell}} \bigg|_P
= -c_{jk}L_n \Big|_P
\]
and
\[
\sum_{\ell,\ell'=1}^N a^j_\ell \frac{\p \bar a^k_{\ell'}}{\p w_\ell} \frac{\p}{\p\w_{\ell'}} \bigg|_P = 
- c_{jk}\Lb_n\Big|_P.
\]
Thus, since $L_j \big|_P = \frac{\p}{\p w_j}$ by Lemma \ref{lem:good coordinates},
\[
\big(L_j\Lb_k\lam + \Lb_k L_j\lam\big)\Big|_P = \frac{\p^2\lam(P)}{\p w_j\p\w_k} - \nu(\lam)\Big|_P c_{jk}(P).
\]
Finally,
\begin{align*}
\left\la\frac12\Big(\p\dbar\lam-\dbar\p\lam\Big), L\wedge\Lb\right\ra\Big|_P
&= \frac 12 \sum_{j,k=1}^{n-1}\left[ \Big( \frac{\p^2\lam}{\p w_j\p\w_k} + c_{jk}\big( \nu(\lam) - \nu(\lam)\big) \Big) \xi_j\bar\xi_k
\right] \Bigg|_P \\
&=\frac 12 \sum_{j,k=1}^{n-1}  \frac{\p^2\lam}{\p w_j\p\w_k} \xi_j\bar\xi_k \Big|_P
\end{align*}
The calculation of $\la\frac12\big(\p_b\dbarb\lam-\dbarb\p_b\lam\big), L\wedge\Lb\ra\big|_P$ is performed identically except that
the sums in \eqref{eqn:d dbar} and \eqref{eqn:dbar d} only go to $n-1$ and not to $N$. The result is that
\[
\left\la\frac12\Big(\p_b\dbarb\lam-\dbarb\p_b\lam\Big), L\wedge\Lb\right\ra\Big|_P
= \frac 12 \sum_{j,k=1}^{n-1}\left[ \Big( \frac{\p^2\lam}{\p w_j\p\w_k} - c_{jk} \nu(\lam) \Big) \xi_j\bar\xi_k
\right] \Bigg|_P.
\]
Consequently,
\begin{align*}
\left\la\frac12\Big(\p\dbar\lam-\dbar\p\lam\Big), L\wedge\Lb\right\ra\Bigg|_P 
- \left\la\frac12\Big(\p_b\dbarb\lam-\dbarb\p_b\lam\Big), L\wedge\Lb\right\ra\Bigg|_P
&= \frac 12 \sum_{j,k=1}^{n-1} c_{jk} \nu(\lam) \, \xi_j\bar\xi_k \bigg|_P\\
&= \frac 12 \nu(\lam) \la d\gamma, L\wedge \bar L\ra\Big|_P
\end{align*}
However, $T$ and $d\gamma$ are globally defined quantities and $P$ was arbitrary, so on $M$,
\[
\left\la\frac12\Big(\p\dbar\lam-\dbar\p\lam\Big), L\wedge\Lb\right\ra
- \left\la\frac12\Big(\p_b\dbarb\lam-\dbarb\p_b\lam\Big), L\wedge\Lb\right\ra
= \frac 12\nu\{\lam\} \la d\gamma, L\wedge \bar L\ra
\]
\end{proof}
We can already see from Proposition \ref{prop:dbar vs. dbar_b on M} the importance of CR-plurisubharmonic functions.
On a compact (smooth) manifold, $\nu\{\lam\}$ will be a bounded quantity, and multiples of 
Levi-form are controlled by CR-plurisubharmonicity.

If $\lam$ is smooth function defined near $P\in M$, let $\lam_{jk}$ satisfy
\[
\p\dbar \lam = \sum_{j,k=1}^N \lam_{jk} \,\omega_j\wedge\omb_k.
\]
Also, let $\I_q = \{J= (j_1,\dots,j_q)\in\N^q : 1 \leq j_1 < \cdots < j_q \leq n\}$ and $\I_q' = \{J\in \I_q : j_q<n\}$. 

As a  of Proposition \ref{prop:dbar vs. dbar_b on M} and Lemma \ref{lem:multilinear algebra}, 
we learn that functions that are plurisubharmonic 
on $(0,q)$-forms near $M$ are CR-plurisubharmonic on $(0,q)$-forms.
\begin{cor}\label{cor:psh implies CR psh}
Let $M$ be as in Lemma \ref{lem:good coordinates}. If $\lam$ is a smooth, real-valued function that is 
plurisubharmonic on $(0,q)$-forms near $M$ and has CR-plurisubharmonicity constant $A_\lam$, 
then $\lam$ is CR-plurisubharmonic on $(0,q)$-forms with CR-plurisubharmonicity constant $A_\lam$.
\end{cor}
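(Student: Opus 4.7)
The plan is to derive the CR-plurisubharmonicity inequality directly from the ambient plurisubharmonicity of $\lam$ by invoking Proposition \ref{prop:dbar vs. dbar_b on M} to rewrite $\frac{1}{2}(\p_b\dbarb\lam-\dbarb\p_b\lam)$ in terms of the ambient $\frac{1}{2}(\p\dbar\lam - \dbar\p\lam)$ minus a Levi-form term, and then absorbing the Levi-form term into $A_0 d\gamma$ using weak pseudoconvexity of $M$.

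First, I would fix an orthonormal system $Z_1,\dots,Z_q \in \Toz(M)$ and apply Proposition \ref{prop:dbar vs. dbar_b on M} with $L = Z_j$ to get
\[
\left\la \tfrac12(\p_b\dbarb\lam - \dbarb\p_b\lam), Z_j\wedge\bar Z_j\right\ra
= \left\la \tfrac12(\p\dbar\lam - \dbar\p\lam), Z_j\wedge\bar Z_j\right\ra
- \tfrac12\nu\{\lam\}\,\la d\gamma, Z_j\wedge\bar Z_j\ra.
\]
Summing over $j$ and adding $A_0\sum_j\la d\gamma, Z_j\wedge\bar Z_j\ra$ produces
\[
\sum_{j=1}^q\left\la \tfrac12(\p_b\dbarb\lam-\dbarb\p_b\lam) + A_0\,d\gamma, Z_j\wedge\bar Z_j\right\ra
= \sum_{j=1}^q \la \p\dbar\lam, Z_j\wedge\bar Z_j\ra
+ \bigl(A_0 - \tfrac12\nu\{\lam\}\bigr)\sum_{j=1}^q\la d\gamma, Z_j\wedge\bar Z_j\ra.
\]

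Next, I would choose $A_0$ to make the coefficient of the Levi-form term nonnegative. Since $M$ is compact and $\lam$ is smooth, the real normal derivative $\nu\{\lam\}$ is bounded on $M$, so setting $A_0 := \tfrac12\sup_M \nu\{\lam\}$ suffices. Weak pseudoconvexity gives $\la d\gamma, Z_j\wedge\bar Z_j\ra \geq 0$ for each $j$, so the second sum on the right is nonnegative. For the first sum, I would apply the appendix's multilinear algebra lemma (Lemma \ref{lem:multilinear algebra}): since any sum of $q$ eigenvalues of the complex Hessian of $\lam$ is bounded below by $A_\lam$, and since the inclusion $\Toz(M)\hookrightarrow \Toz(\C^N)$ is isometric, the sum of the Hessian evaluated on any orthonormal $q$-tuple in $\Toz(M)$ still satisfies $\sum_j \la \p\dbar\lam, Z_j\wedge\bar Z_j\ra \geq A_\lam$. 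Combining the two estimates yields the CR-plurisubharmonicity inequality with constant $A_\lam$.

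The only step that carries real content beyond the computations already done in Proposition \ref{prop:dbar vs. dbar_b on M} is the passage from the eigenvalue hypothesis on the ambient complex Hessian to the inequality on an arbitrary orthonormal system inside the smaller subspace $\Toz(M)$; this is exactly the content of Lemma \ref{lem:multilinear algebra}, and cleanly referencing it is what makes the corollary immediate. Everything else amounts to verifying that the $\nu\{\lam\}$-term can be absorbed, which is automatic on a compact weakly pseudoconvex $M$.
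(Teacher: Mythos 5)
Your argument is exactly the one the paper intends: the corollary is stated as a consequence of Proposition \ref{prop:dbar vs. dbar_b on M} together with Lemma \ref{lem:multilinear algebra}, and you flesh out precisely that route --- compare tangential and ambient Hessians, transfer the sum-of-$q$-eigenvalues hypothesis to orthonormal tuples in $\Toz(M)$ via the multilinear algebra lemma (legitimate since the metric on $M$ is the restriction of the ambient one), and absorb the bounded $\frac12\nu\{\lam\}$ term into $A_0\,d\gamma$ using compactness and weak pseudoconvexity. The only cosmetic point is that the definition requires $A_0>0$, so take $A_0=\max\bigl\{1,\tfrac12\sup_M\nu\{\lam\}\bigr\}$ rather than $\tfrac12\sup_M\nu\{\lam\}$ itself.
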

\subsection{Pseudodifferential Operators}
From Lemma \ref{lem:good coordinates}, there exists a finite cover $\{ U_\nu\}_{\nu}$ so each $U_{\nu}$ has a special boundary system and
can be parameterized by a hypersurface in $\C^n$ ($U_\nu$ may be shrunk as necessary). To set up the microlocal analysis,
we need to define the appropriate pseudodifferential operators on each $U_{\nu}$. Let
$\xi = (\xi_1,\dots,\xi_{2n-2},\xi_{2n-1}) = (\xi',\xi_{2n-1})$ be the coordinates in Fourier space so that
$\xi'$ is dual to the part of $T(M)$ in the maximal complex subspace (i.e., $\Toz(M)\oplus \Tzo(M)$) and 
$\xi_{2n-1}$ is dual to the totally real part of $T(M)$, i.e., the ``bad" direction $T$. Define
\begin{align*}
\Cp &= \{ \xi : \xi_{2n-1} \geq \frac 12 |\xi'| \text{ and } |\xi|\geq1\};\\
\Cm &= \{\xi : -\xi\in \Cp\};\\
\Co &= \{\xi : -\frac 34|\xi'| \leq \xi_{2n-1}\leq \frac 34 |\xi'|\} \cup \{\xi : |\xi|\leq 1\}.
\end{align*}
Note that $\Cp$ and $\Cm$ are disjoint, but both intersect $\Co$ nontrivially. Next, we define functions on 
$\{|\xi| : |\xi|^2 =1\}$. Let
\begin{align*}
\psp(\xi) &= 1 \text{ when } \xi_{2n-1}\geq \frac 34|\xi'| \text{ and } \supp \psp \subset \{\xi : \xi_{2n-1}\geq \frac 12|\xi'|\}; \\
\psm(\xi) &= \psp(-\xi); \\
\pso(\xi) &\text{ satisfies } \pso(\xi)^2 = 1- \psp(\xi)^2 - \psm(\xi)^2.
\end{align*}
Extend $\psp$, $\psm$, and $\pso$ homogeneously outside of the unit ball, i.e., if $|\xi|\geq 1$, then
\[
\psp(\xi) = \psp(\xi/|\xi|),\ \psm(\xi) = \psm(\xi/|\xi|),\text{ and } \pso(\xi) = \pso(\xi/|\xi|). 
\]
Also, extend $\psp$, $\psm$, and $\pso$ smoothly inside the unit ball so that $(\psp)^2+(\psm)^2 + (\pso)^2 =1$. Finally,
for $A$ to be chosen later, define
\[
\pspl(\xi) = \psi(\xi/A),\ \psml(\xi) = \psm(\xi/A),\text{ and }\psol(\xi) = \pso(\xi/A).
\]
Next, let $\Pspl$, $\Psml$, and $\Pso$ be the pseudodifferential operators of order zero with symbols
$\pspl$, $\psml$, and $\psol$, respectively. The equality $(\pspl)^2+(\psml)^2 + (\psol)^2 =1$ implies that
\[
\Pspla\Pspl + \Psola\Psol + \Psmla\Psml = Id.
\]
We will also have use for pseudodifferential operators  that ``dominate" a given pseudodifferential operator. Let
$\psi$ be cut-off function and $\tilde\psi$ be another cut-off function so that $\tilde\psi|_{\supp \psi} \equiv 1$. If $\Psi$ and $\tilde\Psi$ are pseudodifferential operators with symbols $\psi$ and $\tilde\psi$, respectively, then we say that
$\tilde\Psi$ dominates $\Psi$.

For each $U_\nu$, we have a local CR-equivalence to a hypersurface  in $\C^n$, and we can define
$\Pspl$, $\Psml$, and $\Psol$ to act on functions or forms supported in $U_\nu$, so let $\Pspln$, $\Psmln$, and $\Psoln$
be the pseudodifferential operators of order zero defined on $U_\nu$ and $\Cpn$, and $\Cmn$, and $\Con$ be the regions of 
$\xi$-space dual to $U_\nu$ on which the symbol of each of those pseudodifferential operators is supported. Then it follows that:
\[
\Psplan\Pspln + \Psolan\Psoln + \Psmlan\Psmln = Id.
\]
Let $\tPsplm$ and $\tPsmlm$ be pseudodifferential operators that dominate $\Psplm$ and $\Psmlm$, respectively
(where $\Psplm$ and $\Psmlm$ are defined on some $U_\mu$). If $\tCpm$ and $\tCmm$ are the supports of
$\tPsplm$ and $\tPsmlm$, respectively, then we can choose $\{U_\mu\}$, $\tpsplm$, and $\tpsmlm$ so that the following result holds.

\begin{lem}\label{lem: neighborhood intersection lemma}
Let $M$ be a compact, orientable, embedded CR-manifold. There is a finite open covering $\{U_\mu\}_\mu$ of $M$ so that
if $U_\mu, U_\nu\in \{U_\mu\}$ have nonempty intersection, then
there exists a diffeomorphism $\vartheta$ between $U_\nu$ and $U_\mu$ with Jacobian
$\Jt$ so that:
\begin{enumerate}
\item $\tJt(\tCpm)\cap \Cmn = \emptyset$ and $\Cpn \cap \tJt(\tCmm) = \emptyset$ where $\tJt$ is the inverse of the transpose
of $\Jt$;

\item Let ${}^\vartheta\!\Psplm$, ${}^\vartheta\!\Psmlm$, and ${}^\vartheta\!\Psolm$ be the transfers of
$\Psplm$, $\Psmlm$, and $\Psolm$, respectively via $\vartheta$. Then on 
$\{\xi : \xi_{2n-1} \geq \frac 45 |\xi'| \text{ and } |\xi|\geq (1+\ep)A\}$, then principal symbol
of ${}^\vartheta\!\Psplm$ is identically 1, on 
$\{\xi : \xi_{2n-1} \leq -\frac 45 |\xi'| \text{ and } |\xi|\geq (1+\ep)A\}$, then principal symbol
of ${}^\vartheta\!\Psmlm$ is identically 1, and on
$\{\xi : -\frac 13 \xi_{2n-1} \geq \frac 13 |\xi'| \text{ and } |\xi|\geq (1+\ep)A\}$, then principal symbol
of ${}^\vartheta\!\Psolm$ is identically 1, where $\ep>0$ and can be very small;

\item Let ${}^\vartheta\!\tPsplm$, ${}^\vartheta\!\tPsmlm$ be the transfers via $\vartheta$ of
$\tPsplm$ and $\tPsmlm$, respectively. Then the principal symbol of ${}^\vartheta\!\tPsplm$ is identically 1 on
$\Cpn$ and the principal symbol of ${}^\vartheta\!\tPsmlm$ is identically 1 on $\Cmn$;

\item $\tCpm\cap\tCmm = \emptyset$.
\end{enumerate}
\end{lem}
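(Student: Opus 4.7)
The plan is to build the cover in three steps. First, by the compactness of $M$ and Lemma \ref{lem:good coordinates}, I select a finite cover $\{U_\mu\}$ of $M$ by good coordinate charts, each carrying the local basis $L_1,\dots,L_{n-1},\Lb_1,\dots,\Lb_{n-1},T$ supplied by that lemma. Because $M$ is orientable, the globally defined $1$-form $\gamma$ and its dual direction $T$ can be chosen with a consistent orientation across all charts, so the designation of the distinguished "bad" cotangent coordinate $\xi_{2n-1}$ is coherent from chart to chart. In particular the coordinate-change Jacobian will send the positive cone of one chart into a small perturbation of the positive cone of the other, rather than of the negative cone; without orientability, $T$ could flip sign across charts and Property $(1)$ would be violated.

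Second, I refine the cover so that on every nonempty overlap $U_\mu\cap U_\nu$ the transition map $\vartheta$ has cotangent Jacobian $\tJt$ uniformly close to a $T$-preserving orthogonal transformation: for any prescribed tolerance $\delta>0$, shrinking every chart makes $\tJt$ rotate each covector by angle less than $\delta$ and change its magnitude by a factor in $[1-c\delta,\,1+c\delta]$ uniformly on every overlap. This is a standard compactness argument once one notices that at the center of each chart the good-coordinate basis of Lemma \ref{lem:good coordinates} is adapted to the ambient Hermitian structure to first order, so the Jacobian of the change of variables is the identity at the base point and continuous nearby.

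Third, I fix the cutoff functions. Choose the dominating cutoff $\tpsplm$ so that $\tpsplm\equiv 1$ on $\{\xi_{2n-1}\geq \tfrac 12|\xi'|\}=\Cpm$ with $\supp\tpsplm\subset\{\xi_{2n-1}\geq \tfrac 13|\xi'|\}$, and symmetrically for $\tpsmlm$. Property $(4)$ is then immediate from the disjointness of the support cones $\{\xi_{2n-1}\geq \tfrac 13|\xi'|\}$ and $\{\xi_{2n-1}\leq -\tfrac 13|\xi'|\}$. For $(3)$, $\tpsplm$ is one on the cone $\{\xi_{2n-1}\geq \tfrac 12|\xi'|\}$, which under a $\delta$-perturbation by $\tJt$ still contains $\Cpn$ once $\delta$ is small. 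For $(2)$, $\pspl$ is one on $\{\xi_{2n-1}\geq \tfrac 34|\xi'|,\ |\xi|\geq A\}$; the gap between $\tfrac 34$ and $\tfrac 45$ absorbs the angular perturbation of $\tJt$, and the factor $1+\ep$ absorbs the multiplicative error in magnitudes. For $(1)$, the support of $\tpsplm$ lies in $\{\xi_{2n-1}\geq \tfrac 13|\xi'|\}$, which remains disjoint from $\Cmn=\{\xi_{2n-1}\leq -\tfrac 12|\xi'|\}$ after a small $\tJt$-distortion.

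The main obstacle is not any single step but the simultaneous bookkeeping: the refinement tolerance $\delta$, the opening angles $\tfrac 13,\tfrac 12,\tfrac 34,\tfrac 45$, and the dilation $\ep$ must be chosen coherently so every cone inclusion has a positive margin. Once $\delta$ is taken small relative to the gaps between consecutive angles and $\ep$ dominates $c\delta$, every estimate closes with room to spare. The essential geometric inputs are orientability of $M$, which makes the labeling of $T$ and of the positive/negative cones global, and Lemma \ref{lem:good coordinates}, which forces every coordinate-change Jacobian to be nearly orthogonal after sufficient shrinking.
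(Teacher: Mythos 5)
Your argument is, in substance, the same one the paper relies on: the paper does not prove this lemma at all but defers to Lemma 4.3 of \cite{Nic06}, and that proof is exactly the refinement argument you describe --- orientability makes the sign of $T$ (hence of the dual variable $\xi_{2n-1}$) globally coherent, shrinking the charts makes each cotangent transition $\tJt$ uniformly close to a cone-preserving map, and the numerical gaps between the apertures $\frac13,\frac12,\frac34,\frac45$ together with the dilation factor $1+\ep$ absorb the distortion. So the strategy is right and matches the source.

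Two points in your write-up need tightening. First, your concrete choice of the dominating cutoff is too tight for property (3): you take $\tpsplm\equiv 1$ exactly on $\Cpm=\{\xi_{2n-1}\geq\frac12|\xi'|\}$, but what (3) requires is $\tJt^{-1}$... more precisely, that $\tpsplm$ equal $1$ on the image of $\Cpn$ under the relevant cone map, and a boundary ray $\xi_{2n-1}=\frac12|\xi'|$ of $\Cpn$ can be tilted slightly \emph{outside} the set $\{\xi_{2n-1}\geq\frac12|\xi'|\}$ by an arbitrarily small perturbation. You must build the margin into $\tpsplm$ itself, e.g.\ $\tpsplm\equiv1$ on $\{\xi_{2n-1}\geq\frac25|\xi'|\}$ with support in $\{\xi_{2n-1}\geq\frac13|\xi'|\}$; domination only requires $\tpsplm\equiv1$ on $\supp\psplm$, so this is allowed, and then your own ``positive margin'' bookkeeping closes (1), (3), (4) as stated. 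Second, the assertion that the transition Jacobian ``is the identity at the base point'' is not meaningful as written, since $U_\mu$ and $U_\nu$ are centered at different points and Lemma \ref{lem:good coordinates} normalizes each chart only at its own center. What is actually true, and what the argument needs, is weaker: the maximal complex subspace $\Toz(M)\oplus\Tzo(M)$ and the line $\mathbb{R}T$ are globally defined, both coordinate frames are (approximately) orthonormal and adapted to this splitting, and orientability fixes the sign of the $T$-component; hence on sufficiently small overlaps $\tJt$ is uniformly close to a map preserving the half-space $\{\xi_{2n-1}>0\}$ and distorting angles by less than any prescribed $\delta$. With these two repairs your proof is complete and coincides with the cited one.
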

We will suppress the left superscript $\vartheta$ as it should be clear from the context which pseudodifferential operator
must be transferred. The proof of this lemma is contained in Lemma 4.3 and its subsequent discussion in \cite{Nic06} .

\subsection{Norms}
We have a volume form $dV$ on $M$, and we define the following inner products and norms on functions (with their natural
generalizations to forms). Let $\lp$ and $\lm$ be functions defined on $M$.
\begin{align*}
(\phi,\vp)_0 &= \int_M \phi \bar\vp\, dV, \text{ and } \|\vp \|_0^2 = (\vp,\vp)_0 \\
(\phi,\vp)_\lp &= \int_M \phi \bar\vp\, e^{-\lp}\, dV, \text{ and } \|\vp \|_\lp^2 = (\vp,\vp)_\lp \\
(\phi,\vp)_\lm &= \int_M \phi \bar\vp\, e^{\lm} \, dV, \text{ and } \|\vp \|_\lm^2 = (\vp,\vp)_\lm. 
\end{align*}
If $\vp = \sum_{j\in\I_q'} \vp_j \ob_J$, then we use the common shorthand $\|\vp \| = \sum_{j\in\I_q'} \|\vp_J \|$ where $\| \cdot \|$ represents
a generic norm norm applied to $\vp$. 

We also need a norm that is well-suited for the microlocal arguments. Let $\{\zn\}$ be a partition of unity
subordinate to the covering $\{U_\nu\}$ satisfying $\sum_{\nu} \zn^2=1$. Also, for each $\nu$, let $\tzn$ be a cutoff function
that dominates $\zn$ so that $\supp\tzn \subset U_\nu$. Then we define the global inner product and norm as follows:
\begin{multline*}
\langle \phi,\vp \ralplm = \la \phi, \vp \rapm 
= \sumn \big( ( \tzn\Pspln\zn \phi^\nu, \tzn\Pspln\zn \vp^\nu )_{\lp} \\
+ (\tzn \Psoln \zn \phi^\nu ,\tzn \Psoln \zn \vp^\nu )_0
+ (\tzn \Psmln \zn \phi^\nu, \tzn \Psmln \zn \vpn )_{\lm} \big)
\end{multline*}
and
\[
\norm \vp \normlplm^2 = \norm \vp \normpm^2 
= \sumn \big( \| \tzn\Pspln\zn \vp^\nu \|_{\lp}^2 + \|\tzn \Psoln \zn \vp^\nu \|_0^2
+ \|\tzn \Psmln \zn \vpn \|_{\lm}^2 \big), 
\]
where $\vpn$ is the form $\vp$ expressed in the local coordinates on $U_\nu$. The superscript $\nu$ will often be omitted.

For a form $\vp$ supported on $M$, the Sobolev norm of order $s$ is given by the following:
\[
\|\vp\|_s^2 = \sumn \|\tzn\Lambda^s\zn \vp^\nu \|_{0}^2
\]
where $\Lambda$ is defined to be the pseudodifferential operator with symbol $(1+|\xi|^2)^{1/2}$.

It will be essential for us to pass from a the unweighted $L^2$-norm on $M$ and the microlocal norm defined above. The following
lemma says that we can do this without any loss of information.
\begin{lem}\label{lem:equivalence of norms}
Let $\lp$, $\lm$ be smooth functions on $M$ with $0\leq \lp,\lm\leq 1$. Then there exist constants $C_1, C_2>0$ 
so that
\[
C_1 \|\vp\|_0^2 \leq \norm \vp \norm_{\pm}^2 \leq C_2 \|\vp\|_{0}^2 
\]
\end{lem}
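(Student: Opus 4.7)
\emph{Proof plan.} The hypothesis $0\le\lp,\lm\le1$ forces $e^{-\lp}$ and $e^{\lm}$ to lie pointwise in $[e^{-1},e]$, so each weighted $L^2$-norm in the definition of $\|\vp\|_\pm^2$ is equivalent to its unweighted counterpart with universal constants independent of $\lp,\lm$. It therefore suffices to prove
\[
\|\vp\|_0^2 \ \asymp \ \sumn \sum_{op\in\{+,0,-\}} \|\tzn\Psi^{op}_{\nu,A}\zn\vp\|_0^2,
\]
with constants that may depend on $M$, the cover $\{U_\nu\}$, the cutoffs, and $A$, but not on $\lp,\lm$.

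The upper bound is immediate: each $\tzn\Psi^{op}_{\nu,A}\zn$ is a zero-order pseudodifferential operator sandwiched between bounded smooth multipliers, hence $L^2$-bounded with some norm $C_{\nu,op}$ independent of $\vp$. Summing over the finite cover and three operator types, and using $\sumn\zn^2 = 1$, I get
\[
\sumn\sum_{op}\|\tzn\Psi^{op}_{\nu,A}\zn\vp\|_0^2 \ \le \ 3\max_{\nu,op}C_{\nu,op}^2 \sumn\|\zn\vp\|_0^2 \ = \ C_2\|\vp\|_0^2.
\]

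For the lower bound I would combine the two partition-of-unity identities $\sumn\zn^2 = 1$ and $\sum_{op}(\Psi^{op}_{\nu,A})^*\Psi^{op}_{\nu,A} = Id$ to obtain $\|\vp\|_0^2 = \sumn\sum_{op}\|\Psi^{op}_{\nu,A}\zn\vp\|_0^2$. The microlocal norm differs from this expression only by an extra factor $\tzn^2$ inside each integral, and since $\tzn\equiv1$ on $\operatorname{supp}\zn$ a direct symbol computation shows that the self-adjoint operator
\[
Q := \sumn\sum_{op}\zn(\Psi^{op}_{\nu,A})^*\tzn^2\Psi^{op}_{\nu,A}\zn,
\]
which represents the microlocal quadratic form via $(Q\vp,\vp)_0 = \sumn\sum_{op}\|\tzn\Psi^{op}_{\nu,A}\zn\vp\|_0^2$, has principal symbol equal to $\sumn\zn^2\tzn^2\sum_{op}(\psi^{op})^2 = 1$, using $\zn^2\tzn^2 = \zn^2$ and $\sum_{op}(\psi^{op})^2 = 1$.

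The main obstacle is converting this principal-symbol identity into a genuine operator lower bound $Q \ge c\cdot Id$. I would first apply G{\aa}rding's inequality to obtain $(Q\vp,\vp)_0 \ge c\|\vp\|_0^2 - C\|\vp\|_{-1/2}^2$, and then absorb the residual lower-order term by a Fredholm argument: since $Q = Id + R$ with $R$ a pseudodifferential operator of order $-1$ (hence compact on $L^2$), $Q$ is a self-adjoint, nonnegative, Fredholm operator of index zero. Once injectivity of $Q$ is verified --- which follows because $(Q\vp,\vp)_0 = 0$ forces $\tzn\Psi^{op}_{\nu,A}\zn\vp = 0$ for every $\nu$ and $op$, and pseudolocality combined with the two partition identities then iteratively forces $\vp = 0$ --- Fredholm theory yields invertibility of $Q$ and hence the desired constant $C_1>0$.
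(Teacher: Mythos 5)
The decisive issue you overlook is that the constants $C_1,C_2$ must be independent of the parameter $A$ built into the symbols $\psi^{\pm,0}_A(\xi)=\psi^{\pm,0}(\xi/A)$; the paper emphasizes this immediately after the lemma ("with constants independent of $A$") and the entire compactness argument later relies on taking $A$ arbitrarily large. You explicitly settle for "constants that may depend on \dots\ $A$", and that alone makes the proposal miss the point of the lemma. The paper's proof is devoted almost entirely to this uniformity: it writes $\tzn\Pspln\zn = \Pspln\zn - (1-\tzn)\Pspln\zn$ and establishes, by a direct kernel estimate, that $\|(1-\tzn)\Pspln\zn\vpn\|_0 \le \tilde C\|\zn\vpn\|_0$ with $\tilde C$ independent of $A$ (the key observation being $|D^\alpha\psi^{\pm,0}_A(\xi)| = A^{-|\alpha|}|D^\alpha\psi^{\pm,0}(\xi/A)|\le C_\alpha$ for $A\ge1$, combined with $\dist(\supp(1-\tzn),\supp\zn)>0$); together with the trivial $L^2$-contraction property of $\Pspln$, this yields the two-sided bound with $A$-uniform constants. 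Your upper bound is actually salvageable in the same spirit --- Plancherel gives $\|\Psi^{op}_{\nu,A}\|_{L^2\to L^2}\le\|\psi^{op}\|_\infty$, which is manifestly $A$-independent --- but you never note this, and for the lower bound your route cannot recover it.

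The lower bound via Fredholm theory has two further gaps. First, even granting $Q=Id+R$ with $R$ compact and $Q$ injective, the resulting constant in $Q\ge C_1\,Id$ is purely qualitative and has no reason to be uniform in $A$; to obtain uniformity you would need quantitative $A$-independent control on $\|R\|$ and a spectral-gap argument, which you do not supply. Second, the injectivity claim is not justified: $(Q\vp,\vp)_0=0$ gives $\tzn\Psi^{op}_{\nu,A}\zn\vp=0$ for all $\nu,op$, but this does not force $\Psi^{op}_{\nu,A}\zn\vp=0$, because $(1-\tzn)\Psi^{op}_{\nu,A}\zn\vp$ is merely smooth, not zero. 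The statement that "pseudolocality combined with the two partition identities iteratively forces $\vp=0$" is an assertion, not an argument, and I do not see how to close it without precisely the kind of quantitative kernel estimate the paper uses. In short: both halves of your proposal need the $A$-uniform smoothing estimate for $(1-\tzn)\Psi^{op}_{\nu,A}\zn$, and once you have that estimate, the paper's elementary manipulation of norms is shorter and avoids the Fredholm machinery entirely.
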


\begin{proof} It is enough to check this when $\vp$ is a function.
Since $0 \leq \lp,\lm\leq 1$,
\[
\norm \vp\normpm^2 \leq e\sumn\big( \|\tzn\Pspln \zn\vpn\|_{0}^2 + \|\tzn\Psoln \zn\vpn\|_{0}^2 + \|\tzn\Psmln \zn\vpn\|_{0}^2\big). 
\]
We can express $\tzn\Pspln \zn\vpn = \Pspln \zn\vpn - (1-\tzn)\Pspln \zn\vpn$. $(1-\tzn)\Pspln \zn$ is infinitely smoothing, but 
using this bound would lead to a constant depending on $A$. We wish to avoid constants depending on $A$. Observe that
\begin{multline*}
(1-\tzn(x)) \Pspln\zn(x)\vpn(x) 
= \frac{1}{(2\pi)^{2n-1}} (1-\tzn(x)) \int_{\R^{2n-1}} e^{ix\cdot\xi} \pspln(\xi) \widehat{\zn\vpn}(\xi)\, d\xi \\
= \frac{1}{(2\pi)^{2n-1}}\int_{\R^{2n-1}} \vpn(y) \int_{\R^{2n-1}}(1-\tzn(x))\zn(y)e^{i(x-y)\cdot\xi} 
\pspln(\xi) \,d\xi\, dy
\end{multline*}
Define $K(x,y)= \frac{1}{(2\pi)^{2n-1}} \int_{\R^{2n-1}}(1-\tzn(x))\zn(y)e^{i(x-y)\cdot\xi} 
\pspln(\xi) \,d\xi$.
By integration by parts, for any multiindex $\alpha$,
\[
K(x,y) = (1-\tzn(x))\zn(y) \frac{(-i)^\alpha}{(2\pi(x-y)^\alpha)^{2n-1}}\int_{\R^{2n-1}}e^{i(x-y)\cdot\xi} 
D^\alpha \pspln(\xi)\, d\xi.
\]
Recall that $\pspln(\xi) = \psp(\xi/A)$, so requiring that $A\geq 1$ means that
$|D^\alpha \pspln(\xi)| \leq C_\alpha$ where $C_\alpha$ does not depend on $A$. However,
$\supp(1-\tzn) \cap \supp\zn = \emptyset$, so for any $N$, there exists $C_N$ so that
\[
|K(x,y)| \leq |1-\tzn(x)||\zn(y)| \frac {C_N}{(1+|x-y|)^N},
\]
where $C_N$ does not depend on $A$. Consequently,
\[
\|(1-\tzn) \Pspln\zn\vpn(x) \|_0^2 \leq \tilde C \|\zn\vpn\|_0^2.
\]
The range of $\Pspln \zn$ is not $L^2(U_\nu)$ but $L^2(\R^{2n-1})$, but this problem is mitigated by the fact that
$\Pspln\zn$ is a smoothing operator outside of
$\Dom(\zn)$. Also, $\Pspl\zn$ is a contraction on $L^2(\R^{2n-1})$, so 
\[
\|\tzn\Pspln\zn\vpn\|_0^2 \leq 2 \|\Pspln\zn\vpn\|_0^2 + 2\|(1-\tzn)\Pspln\zn\vpn\|_0^2
\leq C_+\|\zn\vpn\|_0^2 
\]
for some $C$ independent of $A$. By (possibly) increasing $C$, a similar bound
will also hold for  for $\Psoln$ and $\Psmln$. The upper bound of the 
lemma therefore follows (since the sum over $\nu$ is finite and $0\leq\zn\leq1$). 

We now show the lower bound. Note that $\sumn \zn^2 = 1 = \sumn\tzn\zn^2$. Consequently,
\begin{align*}
&\|\vp\|_0^2= \big(\sumn\zn^2\vp,\vp\big)_0 = \sumn \|\zn\vpn\|_0^2  \\
&=\sumn \Big( \big( \Psplan\Pspln + \Psolan\Psoln + \Psmlan\Psmln\big)\vpn,\vpn\Big)_0 \\
&=\sumn \Big( \|(\tzn + (1-\tzn))\Pspln\zn\vpn\|_0^2 + \|(\tzn + (1-\tzn))\Psoln\zn\vpn\|_0^2 
+ \|(\tzn + (1-\tzn))\Psmln\zn\vpn\|_0^2  \Big)
\end{align*}
However, $\|(\tzn + (1-\tzn))\Pspln\zn\vpn\|_0^2 \leq 2(\|\tzn\Pspln\zn\vpn\|_0^2 + \|(1-\tzn)\Pspln\zn\vpn\|_0^2)$, and
$\Pspln\zn\vpn$ is pseudolocal (indeed,  $(1-\tzn))\Pspln\zn\vpn$ is infinitely smoothing), so $\|\tzn\Pspln\zn\vpn\|_0^2$ controls 
$\| (1-\tzn)\Pspln\zn\vpn\|_0^2$ and similarly for $\Psmln$ and $\Psoln$. As a result,
\begin{align*}
\|\vp\|_0^2 &\leq C\sumn\Big( \|\tzn\Pspln\zn\vpn\|_0^2 + \|\tzn\Psoln\zn\vpn\|_0^2 
+ \|\tzn\Psmln\zn\vpn\|_0^2 \Big) \\
&\leq C \sumn\Big( \|\tzn\Pspln\zn\vpn\|_\lp^2 + \|\tzn\Psoln\zn\vpn\|_0^2 
+ \|\tzn\Psmln\zn\vpn\|_\lm^2 \Big)
\end{align*}
since $\lp$ and $\lm$ are positive, bounded, and bounded away from zero.
\end{proof}

The meaning of Lemma \ref{lem:equivalence of norms} is that $\norm \vp\normpm \sim \|\vp\|_0^2$ with constants independent 
of $A$, 
so the Riesz Representation Theorem implies the following corollary (see Corollary 4.6 in
\cite{Nic06}).
\begin{cor} There exists a self-adjoint operator $E_{\lp,\lm} = E_{\pm}$ so that
\[
(\vp,\phi)_0 = \la \vp, E_\pm \phi\rapm
\]
for any two forms $\vp$ and $\phi$ in $L^2(M)$. $E_\pm$ is the inverse of 
\[
F_{\pm} = \sumn\Big( \zn \Psplan\tzn e^{-\lp} \tzn\Pspln\zn + \zn \Psolan \tzn^2 \Psoln\zn
+ \zn \Psmlan\tzn e^{\lm} \tzn\Psmln\tzn\Big).
\]
\end{cor}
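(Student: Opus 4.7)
The plan is to establish the existence of $E_{\pm}$ via the Riesz representation theorem applied to the Hilbert space $(L^2(M), \la \cdot, \cdot \rapm)$, then identify its inverse as $F_{\pm}$ by direct computation of adjoints.

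First, I would observe that Lemma \ref{lem:equivalence of norms} shows that $\la \cdot, \cdot \rapm$ is a genuine inner product on $L^2(M)$ whose induced norm is equivalent to $\|\cdot\|_0$. In particular, for each fixed $\phi\in L^2(M)$, the map $\vp\mapsto (\vp,\phi)_0$ is a bounded linear functional on the Hilbert space $(L^2(M), \la \cdot, \cdot\rapm)$, so the Riesz representation theorem produces a unique element $E_\pm \phi\in L^2(M)$ with $(\vp,\phi)_0 = \la \vp, E_\pm \phi \rapm$ for all $\vp$. Linearity of $E_\pm$ in $\phi$ is immediate from uniqueness, and self-adjointness of $E_\pm$ (with respect to either inner product) follows by comparing $(\vp,\phi)_0 = \la \vp, E_\pm\phi \rapm$ with its conjugate $(\phi,\vp)_0 = \la \phi, E_\pm\vp\rapm$ and using that $(\cdot,\cdot)_0$ is conjugate-symmetric.

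Next, I would identify $\la\cdot,\cdot\rapm$ as a weighted $L^2$ pairing. Since $\zn$, $\tzn$, and the weights $e^{-\lp}$, $e^{\lm}$ are real multiplication operators (hence $L^2$-self-adjoint) and $\Psplan$ is by definition the $L^2$-adjoint of $\Pspln$, one computes
\[
(\tzn\Pspln\zn\vpn, \tzn\Pspln\zn\phi^\nu)_{\lp}
= (\tzn\Pspln\zn\vpn,\, e^{-\lp}\tzn\Pspln\zn\phi^\nu)_0
= (\vpn,\, \zn\Psplan\tzn e^{-\lp}\tzn\Pspln\zn\phi^\nu)_0,
\]
with analogous formulas for the $\pso$ and $\psm$ pieces. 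Summing over $\nu$ gives $\la\vp,\phi\rapm = (\vp, F_\pm\phi)_0$.

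Combining the two identities yields $(\vp,\phi)_0 = (\vp, F_\pm E_\pm\phi)_0$ for all $\vp,\phi$, hence $F_\pm E_\pm = \mathrm{Id}$; the reverse identity follows by the same argument with the roles swapped, or from the fact that $F_\pm$ is $L^2$-self-adjoint (each summand is manifestly so) and bounded below by $C_1\mathrm{Id}$ thanks to the lower bound in Lemma \ref{lem:equivalence of norms} (which gives $(F_\pm\phi,\phi)_0 = \norm\phi\normpm^2 \geq C_1\|\phi\|_0^2$), so $F_\pm$ is positive definite and invertible. Thus $E_\pm = F_\pm^{-1}$ as claimed. There is no real obstacle here beyond bookkeeping the adjoint computation; the only care required is confirming that each factor inside $F_\pm$ is either a real multiplication operator or a pseudodifferential operator with its formal $L^2$-adjoint partner, so that $F_\pm$ is self-adjoint term by term.
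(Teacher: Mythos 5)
Your argument is correct and takes essentially the same route as the paper, which simply invokes the norm equivalence of Lemma \ref{lem:equivalence of norms} together with the Riesz representation theorem and defers the details to Corollary 4.6 of \cite{Nic06}; your adjoint computation identifying $\la\vp,\phi\rapm=(\vp,F_\pm\phi)_0$ and the positivity argument giving $E_\pm=F_\pm^{-1}$ fill in exactly those details. Note only that your computation produces the last summand as $\zn\Psmlan\tzn e^{\lm}\tzn\Psmln\zn$, which confirms that the trailing $\tzn$ in the paper's displayed formula for $F_\pm$ is a typo for $\zn$.
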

$E_\pm$ and $F_\pm$ are bounded in $L^2(M)$ independently of $A\geq1$ since $0\leq \lp,\lm \leq 1$.

\subsection{$\dbarb$ and its adjoints}
If $f$ is a function on $M$, in local coordinates,
\[
\dbarb f = \sum_{j=1}^{n-1} \Lb_j f \ob_j,
\]
while if $\vp$ is a $(0,q)$-form, there exist functions $m_K^J$ so that
\[
\dbarb\vp = \sum_{\atopp{J\in\I_q'}{K\in\I_{q+1}'}} \sum_{j=1}^{n-1} \ep^{jJ}_K \Lb_j \vp_J\ob_K
+ \sum_{\atopp{J\in\I_q'}{K\in\I_{q+1}'}} \vp_J m^J_K \ob_K.
\]
Let $\Lba_j$ be the adjoint of $\Lb_j$ in $(\cdot, \cdot)_0$, $\Lbap_j$ be the adjoint of $\Lb_j$ in
$(\cdot,\cdot)_\lp$, and $\Lbam_j$ be the adjoint of $\Lb_j$ in $(\cdot,\cdot)_\lm$. Then we define
$\dbarbs$, $\dbarbsp$, and $\dbarbsm$ to be the adjoints of $\dbarb$ in $L^2(M)$, $L^2(M,e^{-\lp})$,
and $L^2(M,e^{\lm})$, respectively.
On a $(0,q)$-form $\vp$, we have (for some functions $f_j\in C^\infty(U)$)
\begin{align*}
\dbarbs \vp &= \sum_{\atopp{I\in\I_{q-1}'}{J\in\I_q'}}\sum_{j=1}^{n-1} \ep^{jI}_J \Lba_j \vp_J \ob_I
+ \sum_{\atopp{I\in\I_{q-1}'}{J\in\I_q'}} \overline{m^I_J} \vp_J  \ob_I \\
&= -\sum_{\atopp{I\in\I_{q-1}'}{J\in\I_q'}}\sum_{j=1}^{n-1}\ep^{jI}_J\big( L_j \vp_J + f_j \vp_J\big)\omb_I
+ \sum_{\atopp{I\in\I_{q-1}'}{J\in\I_q'}} \overline{m^I_J} \vp_J  \ob_I
\end{align*}
\begin{align} \label{eqn:dbarb adjoints}
\dbarbsp \vp &= \sum_{\atopp{I\in\I_{q-1}'}{J\in\I_q'}}\sum_{j=1}^{n-1} \ep^{jI}_J \Lbap_j \vp_J \ob_I
+ \sum_{\atopp{I\in\I_{q-1}'}{J\in\I_q'}} \overline{m^I_J} \vp_J  \ob_I \\
&= -\sum_{\atopp{I\in\I_{q-1}'}{J\in\I_q'}}\sum_{j=1}^{n-1}\ep^{jI}_J\big( L_j \vp_J - L_j\lp \vp_J + f_j \vp_J\big)\omb_I
+ \sum_{\atopp{I\in\I_{q-1}'}{J\in\I_q'}} \overline{m^I_J} \vp_J  \ob_I \nn
\end{align}
\begin{align*}
\dbarbsm \vp &= \sum_{\atopp{I\in\I_{q-1}'}{J\in\I_q'}}\sum_{j=1}^{n-1} \ep^{jI}_J \Lbam_j \vp_J \ob_I
+ \sum_{\atopp{I\in\I_{q-1}'}{J\in\I_q'}} \overline{m^I_J} \vp_J  \ob_I \\
&= -\sum_{\atopp{I\in\I_{q-1}'}{J\in\I_q'}}\sum_{j=1}^{n-1}\ep^{jI}_J\big( L_j \vp_J + L_j\lm \vp_J+ f_j \vp_J\big)\omb_I
+ \sum_{\atopp{I\in\I_{q-1}'}{J\in\I_q'}} \overline{m^I_J} \vp_J  \ob_I
\end{align*}
Consequently, we see that
\[
\dbarbsp = \dbarbs - [\dbarbs,\lp] \text{  and  } \dbarbsm = \dbarbs + [\dbarbs,\lm],
\]
and all three adjoints have the same domain. Finally, let $\dbarbspm$ be the adjoint of $\dbarb$ with respect to
$\la\cdot,\cdot\rapm$. 

The computations proving Lemma 4.8 and Lemma 4.9 and equation (4.4) in \cite{Nic06} 
can be applied here with only
a change of notation, so we have the following two
results, recorded here as Lemma \ref{lem: dbarbspm computation} and Lemma \ref{lem:energy form estimate for dbarbspm}. 
The meaning of the results is that $\dbarbspm$ acts like $\dbarbsp$ for forms whose support is basically $\Cp$
and $\dbarbsm$ on forms whose support is basically $\Cm$.
\begin{lem}\label{lem: dbarbspm computation}
On smooth $(0,q)$-forms,
\begin{multline*}
\dbarbspm = \dbarbs - \summ \zm^2 \tPsplm[\dbarbs,\lp] + \summ\zm^2 \tPsmlm [\dbarbs,\lm] \\
+ \summ \Big( \tzm [\tzm\Psplm\zm,\dbarb]^*\tzm \Psplm\zm + \zm\Psplam\tzm[\dbarbsp,\tzm\Psplm\zm]\tzm \\
+ \tzm [\tzm\Psmlm\zm,\dbarb]^*\tzm \Psmlm\zm + \zm\Psplam\tzm[\dbarbsm,\tzm\Psmlm\zm]\tzm + E_A \Big),
\end{multline*}
where the error term $E_A$ is a sum of order zero terms and lower order terms. Also, the symbol of $E_A$ is supported in
$\Com$ for each $\mu$.
\end{lem}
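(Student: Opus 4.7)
The starting point is the defining identity $\la\dbarb\vp,\phi\rapm = \la\vp,\dbarbspm\phi\rapm$, so the plan is to expand the left-hand side according to the microlocal definition of $\la\cdot,\cdot\rapm$ and transfer $\dbarb$ off $\vp$ by a sequence of commutations. Expanding gives
\[
\la\dbarb\vp,\phi\rapm = \summ\Big[(\tzm\Psplm\zm\dbarb\vp,\tzm\Psplm\zm\phi)_\lp+(\tzm\Psolm\zm\dbarb\vp,\tzm\Psolm\zm\phi)_0+(\tzm\Psmlm\zm\dbarb\vp,\tzm\Psmlm\zm\phi)_\lm\Big].
\]
For each summand I would commute $\dbarb$ past the localizer, writing $\tzm\Psplm\zm\dbarb\vp = \dbarb(\tzm\Psplm\zm\vp) - [\dbarb,\tzm\Psplm\zm]\vp$, and transfer the resulting $\dbarb$ to the second factor using the appropriate weighted adjoint: $\dbarbsp$ on the $+$ piece, $\dbarbs$ on the $0$ piece, and $\dbarbsm$ on the $-$ piece. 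Then commute the adjoint back past the localizer, e.g.\ $\dbarbsp\tzm\Psplm\zm\phi = \tzm\Psplm\zm\dbarbsp\phi + [\dbarbsp,\tzm\Psplm\zm]\phi$.

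The identities $\dbarbsp = \dbarbs - [\dbarbs,\lp]$ and $\dbarbsm = \dbarbs + [\dbarbs,\lm]$ now turn each weighted adjoint into $\dbarbs$ plus a weight correction. Summing the resulting diagonal pieces $(\tzm\Psplm\zm\vp,\tzm\Psplm\zm\dbarbs\phi)_\lp$, $(\tzm\Psolm\zm\vp,\tzm\Psolm\zm\dbarbs\phi)_0$, $(\tzm\Psmlm\zm\vp,\tzm\Psmlm\zm\dbarbs\phi)_\lm$ over $\mu$ gives exactly $\la\vp,\dbarbs\phi\rapm$ by the very definition of the microlocal inner product, which is how the leading $\dbarbs$ appears. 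The weight-commutator contributions from the $+$ and $-$ regions, after using $\tpsplm\equiv 1$ on $\supp\psplm$ (so that $\zm\Psplam\tzm e^{-\lp}\tzm\Psplm\zm$ can be replaced by $\zm^2\tPsplm$ up to an operator whose symbol lives in $\Com$) and the analogous statement for $\tpsmlm$, give precisely the terms $-\summ\zm^2\tPsplm[\dbarbs,\lp]$ and $+\summ\zm^2\tPsmlm[\dbarbs,\lm]$.

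The remaining commutator terms $[\tzm\Psplm\zm,\dbarb]$ and $[\tzm\Psmlm\zm,\dbarb]$, after pairing with $\tzm\Psplm\zm\phi$ and $\tzm\Psmlm\zm\phi$ in the respective weighted inner product and undoing the weight via $e^{\mp\lp}, e^{\pm\lm}$, produce the terms $\tzm[\tzm\Psplm\zm,\dbarb]^*\tzm\Psplm\zm$ and $\tzm[\tzm\Psmlm\zm,\dbarb]^*\tzm\Psmlm\zm$; similarly, the commutators $[\dbarbsp,\tzm\Psplm\zm]$ and $[\dbarbsm,\tzm\Psmlm\zm]$ from the back-commutation step yield $\zm\Psplam\tzm[\dbarbsp,\tzm\Psplm\zm]\tzm$ and $\zm\Psmlam\tzm[\dbarbsm,\tzm\Psmlm\zm]\tzm$. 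All contributions from the $0$ piece (where $\dbarbs$ is already the unweighted adjoint and no weight correction is needed) together with the lower-order symbol mismatches noted above collect into the error $E_A$.

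The main obstacle is verifying that $E_A$ has symbol supported in $\Com$ for each $\mu$. This rests on three facts: the symbol identity $(\pspl)^2+(\psol)^2+(\psml)^2=1$, which controls how the three microlocal pieces recombine into the unweighted operator $\dbarbs$; the dominating property $\tpsplm\equiv 1$ on $\supp\psplm$ (and likewise for $\tpsmlm$), which forces the symbol discrepancy between $(\tzm\Psplm\zm)^*(\tzm\Psplm\zm)$ and $\zm^2\tPsplm$ to be supported away from $\Cpm$; and the absence of a weight on the $0$ piece, which eliminates any correction there. Because the manipulations parallel those of \cite[\S4]{Nic06} verbatim up to notation, no new computation is required beyond this bookkeeping.
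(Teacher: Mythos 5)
Your proposal reconstructs, correctly and in the same spirit, the computation that the paper itself defers to \cite{Nic06}: expand $\la\dbarb\vp,\phi\rapm$ in the microlocal inner product, commute $\dbarb$ across each localizer $\tzm\Psplm\zm$, $\tzm\Psolm\zm$, $\tzm\Psmlm\zm$, transfer to the second slot via the matching weighted adjoint, commute the adjoint back, and split $\dbarbsp=\dbarbs-[\dbarbs,\lp]$ and $\dbarbsm=\dbarbs+[\dbarbs,\lm]$, so that the diagonal pieces reassemble into $\la\vp,\dbarbs\phi\rapm$ and the remaining pieces give the displayed commutator and weight terms, with the leftovers supported in $\Com$ by the dominating-symbol and partition-of-unity facts you cite. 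One small observation: you have silently replaced the paper's $\zm\Psplam$ in front of $[\dbarbsm,\tzm\Psmlm\zm]\tzm$ by $\zm\Psmlam$, which is exactly what the $\Cm$ piece of your derivation produces; the factor as printed in the statement appears to be a typographical slip.
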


We are now ready to define the energy forms that we use. Let
\begin{align*}
\Qbpm(\phi,\vp) &= \la\dbarb\phi,\dbarb\vp\rapm + \la\dbarbspm\phi,\dbarbspm\vp\rapm \\
\Qbp(\phi,\vp)&= (\dbarb\phi,\dbarb\vp)_\lp + (\dbarbsp\phi,\dbarbsp\vp)_\lp \\
\Qbo(\phi,\vp) &= (\dbarb\phi,\dbarb\vp)_0 + (\dbarbs\phi,\dbarbs\vp)_0 \\
\Qbm(\phi,\vp) &= (\dbarb\phi,\dbarb\vp)_\lm + (\dbarbsm\phi,\dbarbsm\vp)_\lm. 
\end{align*}

\begin{lem}\label{lem:energy form estimate for dbarbspm} 
If $\vp$ is a smooth $(0,q)$-form on $M$, then there exist constants $K, K_\pm, K'$ with $K\geq 1$ so that
\begin{multline}\label{eqn:energy form -- dbarb commuted by psi-do}
K\Qbpm(\vp,\vp) + K_\pm\sumn \|\tzn\tPsoln\zn\vpn\|_0^2 +  K'\|\vp\|_0^2 + O_{\pm}(\|\vp\|_{-1}^2)
\geq \sumn \Big[ \Qbp(\tzn\Pspln\zn\vpn,\tzn\Pspln\zn\vpn) \\
+ \Qbo(\tzn\Psoln\zn\vpn,\tzn\Psoln\zn\vpn) + \Qbm(\tzn\Psmln\zn\vpn,\tzn\Psmln\zn\vpn) \Big]
\end{multline}
$K$ and $K'$ do not depend on $A$.
\end{lem}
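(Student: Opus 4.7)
The plan is to start by expanding $\Qbpm(\vp,\vp) = \la\dbarb\vp,\dbarb\vp\rapm + \la\dbarbspm\vp,\dbarbspm\vp\rapm$ using the definition of $\la\cdot,\cdot\rapm$ as a sum over the partition of unity $\{\zn\}$ of microlocalized inner products with weights $e^{-\lp}$, $1$, and $e^{\lm}$ on the $+$, $0$, and $-$ pieces respectively. The goal is to move the microlocal cutoffs $\tzn\Pspln\zn$, $\tzn\Psoln\zn$, $\tzn\Psmln\zn$ inside $\dbarb$ and $\dbarbspm$ and match the weights so that each triple becomes a full energy form $\Qbp$, $\Qbo$, or $\Qbm$.

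First I would handle the $\dbarb$ piece. For each $\nu$, writing $\tzn\Pspln\zn\dbarb\vpn = \dbarb(\tzn\Pspln\zn\vpn) - [\dbarb,\tzn\Pspln\zn]\vpn$, the commutator is an order-zero operator whose $L^2$-norm is bounded by $C\|\vp\|_0^2 + O(\|\vp\|_{-1}^2)$, with constants independent of $A$ (the commutator of $\dbarb$ with a zeroth order $\psi$do has the same principal order but picks up a lower order piece from the symbol expansion). Performing this on all three microlocal pieces yields, up to absorbable $O(\|\vp\|_0^2)$ errors, the $\dbarb$-halves of $\Qbp(\tzn\Pspln\zn\vpn,\cdot)$, $\Qbo(\tzn\Psoln\zn\vpn,\cdot)$, and $\Qbm(\tzn\Psmln\zn\vpn,\cdot)$.

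The more delicate half is $\dbarbspm$, and here I would invoke Lemma \ref{lem: dbarbspm computation}. Applied to $\tzn\Pspln\zn\vpn$ (whose microlocal support lies in $\Cpn$), the first correction $-\summ\zm^2\tPsplm[\dbarbs,\lp]$ is principal on the $+$ cone and converts $\dbarbs$ into $\dbarbsp$ there by Lemma \ref{lem: neighborhood intersection lemma}; the $\tPsmlm$ correction vanishes to top order since $\tCpm\cap\tCmm=\emptyset$; the remaining commutator terms $[\tzm\Psplm\zm,\dbarb]^*$ etc. are again commutators of zeroth order operators and contribute $O(\|\vp\|_0^2)$; finally the error $E_A$ is supported in $\Com$ and so its pairing against the $+$-microlocalized form is negligible up to a smoothing term. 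Symmetrically, on the $-$ microlocal piece $\dbarbspm$ is replaced by $\dbarbsm$. Combined with the $\dbarb$ computation, this produces $\Qbp$ on the $+$ pieces and $\Qbm$ on the $-$ pieces, which, together with the factor $K\geq 1$ to absorb universal constants from reassembling weights and commutators, gives the stated inequality.

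The main obstacle is the $0$-cone piece. Here $\dbarbspm$ does not match $\dbarbs$ cleanly because both the $\tPsplm[\dbarbs,\lp]$ and $\tPsmlm[\dbarbs,\lm]$ corrections from Lemma \ref{lem: dbarbspm computation}, as well as the error term $E_A$, have nontrivial support overlapping $\Con$. These cannot be absorbed into $\|\vp\|_0^2$ with an $A$-independent constant because $\lp$ and $\lm$ and the symbol cut-offs depend on $A$. The remedy is precisely the extra term $K_\pm \sumn\|\tzn\tPsoln\zn\vpn\|_0^2$, where $\tPsoln$ dominates $\Psoln$ on $\Con$ and therefore on the microlocal support of all these error contributions; pseudolocality of the $\psi$do calculus lets everything on the $0$ cone be controlled by this single dominating term with an $A$-dependent constant $K_\pm$. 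The $K'\|\vp\|_0^2$ accommodates the $A$-independent commutator errors from the $\dbarb$ step and the bookkeeping between $\la\cdot,\cdot\rapm$ and $(\cdot,\cdot)_{\lp}$, $(\cdot,\cdot)_{\lm}$, and $(\cdot,\cdot)_0$, while $O_\pm(\|\vp\|_{-1}^2)$ soaks up the lower order terms from all symbol expansions.
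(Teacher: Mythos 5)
Your outline is essentially the argument the paper itself relies on: the paper gives no self-contained proof of this lemma but defers to the computations of Lemmas 4.8, 4.9 and equation (4.4) in \cite{Nic06}, and your scheme---commuting $\tzn\Pspln\zn$, $\tzn\Psoln\zn$, $\tzn\Psmln\zn$ through $\dbarb$ and $\dbarbspm$, using Lemma \ref{lem: dbarbspm computation} together with the support statements of Lemma \ref{lem: neighborhood intersection lemma} to identify $\dbarbspm$ with $\dbarbsp$ on the $+$ cone and with $\dbarbsm$ on the $-$ cone, putting $A$-independent order-zero commutator errors into $K'\|\vp\|_0^2$ and $A$-dependent lower-order errors into $O_\pm(\|\vp\|_{-1}^2)$, and absorbing the $0$-cone mismatches into $K_\pm\sumn\|\tzn\tPsoln\zn\vpn\|_0^2$---is exactly that computation. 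One small correction: since $\Co$ intersects $\Cp$ nontrivially, the composition of $E_A$ (whose symbol is supported in $\Com$) with the $+$-microlocalized piece is \emph{not} negligible up to a smoothing term; like the other contributions supported in the $0$ cone, it must be absorbed into the $A$-dependent term $K_\pm\sumn\|\tzn\tPsoln\zn\vpn\|_0^2$ plus $O_\pm(\|\vp\|_{-1}^2)$, which is precisely the mechanism your final paragraph supplies.
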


Many of the subsequent proofs make use of the``lc/sc" argument: $-\ep\|x\|^2 -\ep^{-1}\|y\|^2 \leq
2\Rre((x,y)) \leq \ep \|x\|^2 + \ep^{-1}\|y\|^2$ where $(\cdot,\cdot)$ is any Hermitian inner product with associated
norm $\|\cdot\|$. Also, since that $\dbarbsp = \dbarb + lower order$, commuting $\dbarbsp$ by  $\Pspln$ creates error
terms of order 0 that do not depend on $\lp$ and lower order terms that may depend on $\lp$.

%
%
\section{The Basic Estimate}\label{sec:basic estimate}
The goal of this section is to prove a basic estimate for smooth forms on $M$. 
\begin{prop}\label{prop:basic estimate}
Let $M\subset \C^N$ be a compact, orientable, weakly pseudoconvex CR-manifold of dimension $n\geq 5$ 
and $1\leq q\leq n-2$. Assume that $M$
admits  functions $\lp$ and $\lm$ where
$\lp$ is strictly CR-plurisubharmonic on $(0,q)$-forms 
and $\lm$ is strictly CR-plurisubharmonic on $(0,n-1-q)$-forms
Let $\vp\in\Dom(\dbarb)\cap\Dom(\dbarbs)$. 
There exist constants $K$,  $K_\pm$, and $K_\pm'$ where $K$  does
not depend on  $\lp$ and $\lm$ (and consequently $A$) so that
\[
A_\pm \norm \vp\normpm^2 \leq K \Qbpm(\vp,\vp) + K \norm \vp\normpm^2 +
K_\pm\sumn\sum_{J\in\I_q'}\|\tzn\tPsoln\zn\vpn_J\|_0^2 + K_\pm' \|\vp\|_{-1}^2.
\]
The constant $A_\pm>0$ is the minimum of the CR-plurisubharmonicity constants $A_{\lp}$ and $A_{\lm}$.
\end{prop}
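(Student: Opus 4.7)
The plan is to carry out a weighted, microlocalised Kohn--Morrey--type computation on the three pieces $\tzn\Pspln\zn\vpn$, $\tzn\Psoln\zn\vpn$, $\tzn\Psmln\zn\vpn$ given by the microlocal decomposition of the identity, with CR-plurisubharmonicity of $\lp$ (resp.\ $\lm$) playing the role that plurisubharmonicity of a weight plays in the $\dbar$-Neumann problem. The first step is to invoke Lemma \ref{lem:energy form estimate for dbarbspm}: it reduces the task to producing, for each chart $U_\nu$, lower bounds of the form
\[
\Qbp(\tzn\Pspln\zn\vpn,\tzn\Pspln\zn\vpn)\gtrsim A_{\lp}\,\|\tzn\Pspln\zn\vpn\|_{\lp}^2
\]
and the analogous estimate for the $-$ piece with $\lm$, modulo admissible errors of the types $\norm\vp\normpm^2$, $\sumn\|\tzn\tPsoln\zn\vpn\|_0^2$, and $\|\vp\|_{-1}^2$. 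The $\Psoln$-piece already appears on the right-hand side of the statement as an error term, so no positive lower bound is needed there.

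For the $+$ microlocal piece, set $\phi=\tzn\Pspln\zn\vpn$. Expanding $\Qbp(\phi,\phi)$ via \eqref{eqn:dbarb adjoints} and integrating by parts produces the standard squared-norm terms $\sum_{J,j}\|\Lb_j\phi_J\|_{\lp}^2$ together with a commutator expression of the shape
\[
\sum_{J,K\in\I_q'}\sum_{j,k=1}^{n-1}\varepsilon^{jK}_{jJ}\big([L_k,\Lbap_j]\phi_J,\phi_K\big)_{\lp},
\]
in which $[L_k,\Lbap_j]=[L_k,\Lba_j]+L_k L_j\{\lp\}$. The first summand contributes $c_{kj}T$ modulo lower order, and the second contributes the tangential $(1,0)$-component of the ambient complex Hessian of $\lp$. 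Proposition \ref{prop:dbar vs. dbar_b on M} then trades the ambient Hessian for $\tfrac12(\p_b\dbarb\lp-\dbarb\p_b\lp)$ plus a correction $\tfrac12\nu\{\lp\}\la d\gamma,L_k\wedge\bar L_j\ra$, so the commutator contribution reduces to
\[
\sum_{J,K,j,k}\varepsilon^{jK}_{jJ}\Big\la\tfrac12(\p_b\dbarb\lp-\dbarb\p_b\lp)+A_0\, d\gamma,\,L_k\wedge\bar L_j\Big\ra(\phi_J,\phi_K)_{\lp}
\]
plus a remaining Levi-form piece $\sum\varepsilon^{jK}_{jJ}c_{jk}(\alpha\phi_J,\phi_K)_{\lp}$ with $\alpha=T-A_0+\tfrac12\nu\{\lp\}$.

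The point is that on $\Cp$ the principal symbol of $T$ is $\xi_{2n-1}\geq\tfrac12|\xi'|\geq 0$; weak pseudoconvexity (positive semidefiniteness of $\la d\gamma, L\wedge\bar L\ra$) therefore makes the Levi-form remainder non-negative on $(0,q)$-forms up to $A$-independent lower-order errors, and the displayed sum is at least $A_{\lp}\sum_J\|\phi_J\|_{\lp}^2$ by the definition of strict CR-plurisubharmonicity of $\lp$ on $(0,q)$-forms. The $-$ piece is symmetric: on $\Cm$ the operator $-T$ has non-negative symbol, which forces the argument to be run at the dual form level $(0,n-1-q)$. Lemma \ref{lem:multilinear algebra} from the appendix translates strict CR-plurisubharmonicity of $\lm$ on $(0,n-1-q)$-forms into the required lower bound at level $q$, and this is precisely where the dimension hypothesis enters. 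Summing the two microlocal estimates, absorbing the lower-order errors via Lemma \ref{lem:equivalence of norms}, and re-applying Lemma \ref{lem:energy form estimate for dbarbspm} to return to $\Qbpm$ yields the stated inequality.

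The main obstacle is quantitative control of constants. The proof must produce a constant $K$ in front of $\Qbpm(\vp,\vp)$ and $\norm\vp\normpm^2$ that is \emph{independent} of $A$, while every $A$-dependent constant lands on the admissible error terms $\sumn\|\tzn\tPsoln\zn\vpn\|_0^2$ and $\|\vp\|_{-1}^2$. This requires careful tracking of every commutator $[\Pspln,\dbarb]$ and $[\Pspln,\dbarbsp]$ and of the cross-chart interactions; the latter are handled by disjointness property (1) of Lemma \ref{lem: neighborhood intersection lemma}, and the former are already arranged by the structure of Lemma \ref{lem:energy form estimate for dbarbspm}.
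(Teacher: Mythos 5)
Your overall architecture is the paper's: reduce via Lemma \ref{lem:energy form estimate for dbarbspm} to chart-by-chart lower bounds for $\Qbp(\tz\Pspl\zn\vpn,\tz\Pspl\zn\vpn)$ and $\Qbm(\tz\Psml\zn\vpn,\tz\Psml\zn\vpn)$ (these are Propositions \ref{prop:local results for Pspl} and \ref{prop:local results for Psml}), obtain them from the Kohn--Morrey-type expansion of Lemmas \ref{lem: controlling Qbp} and \ref{lem:controlling Qbm}, feed the resulting Hermitian matrix into the multilinear algebra of the appendix, and recombine. Your detour through the ambient Hessian and Proposition \ref{prop:dbar vs. dbar_b on M} is unnecessary (the paper works directly with the tangential expression defining $\sjkp$ and $\sjkm$) but is harmless in principle, provided the correction $\frac 12\nu\{\lp\}\la d\gamma,\cdot\ra$ it introduces is tracked, which is exactly where your argument breaks down.

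The genuine gap is in your treatment of the $T$-terms, which is the heart of the proof. You claim the remainder $\sum \ep^{kJ}_{jJ'}\,c_{jk}\big(\alpha\,\phi_J,\phi_{J'}\big)_{\lp}$ with $\alpha = T-A_0+\frac 12\nu\{\lp\}$ is nonnegative up to $A$-independent lower-order errors ``because $\sigma(T)=\xi_{2n-1}\geq \frac12|\xi'|\geq 0$ on $\Cp$ and $(c_{jk})\geq 0$.'' Nonnegativity of $\xi_{2n-1}$ does not make $\xi_{2n-1}-A_0+\frac12\nu\{\lp\}$ nonnegative: on the undilated cone $\Cp$ one only has $\xi_{2n-1}\gtrsim 1$, so the symbol of $\alpha$ is negative on a large region. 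This is precisely why the cutoffs $\pspl(\xi)=\psp(\xi/A)$ are dilated by the parameter $A$ and why the local propositions take $A=A_0$: on the support of $\pspl$ one has $\xi_{2n-1}\geq A$, and it is this largeness, not mere nonnegativity, that produces the term $A_0\,c_{jk}$ completing the CR-plurisubharmonicity expression $\sjkp$ (Lemmas \ref{lem:T bound lp} and \ref{lem:T bound lm}). Moreover, even with pointwise nonnegativity of the matrix-valued first-order symbol $c_{jk}(x)\,\sigma(\alpha)(x,\xi)$, passing to an operator lower bound modulo $O(\|\cdot\|_0^2)$ is not automatic for systems: it requires the sharp G{\aa}rding inequality at form level (Theorem \ref{thm: Garding} and Corollary \ref{cor:Garding}, following Lax--Nirenberg), and for the $\Cm$ piece one needs the variant adapted to sums of $n-1-q$ eigenvalues. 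Your proposal never invokes any G{\aa}rding-type inequality, so the assertion ``nonnegative up to lower-order errors'' is unsupported; in addition, without the paper's formulation one cannot verify your own requirement that the constant multiplying $\norm\vp\normpm^2$ stays independent of the CR-plurisubharmonicity constants, which is what makes the later absorption by $A_\pm$ possible. Finally, the dimension hypothesis enters through Lemma \ref{lem:multilin for n-1-q at a time} (the $n-1-q$ eigenvalue-sum statement), not through Lemma \ref{lem:multilinear algebra} as you cite, though this is a minor misattribution compared with the missing G{\aa}rding step.
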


The proof of Proposition \ref{prop:basic estimate}  comes as the culmination of a series of calculations that started with
Lemma \ref{lem:energy form estimate for dbarbspm}.

\subsection{Local Estimates}
We work on a fixed $U = U_\nu$. On this neighborhood, as above, there exists an orthonormal basis of vector fields
$L_1,\dots,L_{n}$, $\Lb_1,\dots,\Lb_n$ so that
\begin{equation}\label{eqn:L, Lb adjoint 1}
[L_j,\Lb_k] = c_{jk}T + \sum_{\ell=1}^{n-1} (d_{jk}^\ell L_\ell - \bar d_{kj}^\ell \Lb_\ell) 
\end{equation}
if $1\leq j,k\leq n-1$, and $T = L_n-\Lb_n$, and for some fixed point $P$,
\[
[L_j,\Lb_k]\big|_P = c_{jk}T.
\]
Note that $c_{jk}$ are the coefficients of the Levi form.
Recall that $\Lbap$, $\Lba$, and $\Lbam$ are the adjoints of $\Lb$ in $(\cdot,\cdot)_\lp$, $(\cdot,\cdot)_0$,
and $(\cdot,\cdot)_\lm$, respectively. From \eqref{eqn:dbarb adjoints}, we see that 
\[
\Lbap_j = -L_j+L_j(\lp) -f_j\quad\text{and}\quad \Lbam_j = -L_j-L_j(\lm) -f_j,
\]
and plugging this into (\ref{eqn:L, Lb adjoint 1}),  we have
\begin{align*}
[\Lbap_j,\Lb_k] &= -c_{jk}T - \sum_{\ell=1}^{n-1} \Big(d_{jk}^\ell L_\ell - \bar d_{kj}^\ell \Lb_\ell\Big) 
-\Lb_k L_j\lp +\Lb_kf_j \\
[\Lbam_j,\Lb_k] &= -c_{jk}T - \sum_{k=1}^{n-1} \Big(d_{jk}^\ell L_\ell - \bar d_{kj}^\ell \Lb_k \Big) 
+\Lb_k L_j\lm + \Lb_k f_j
\end{align*}

For the inner product $\Qbpp$, we have the following estimate. 
\begin{lem}\label{lem: controlling Qbp}
Let $\vp$ be a $(0,q)$-form supported in $U$, $\vp\in\Dom(\dbarb)\cap\Dom(\dbarbs)$. There exists
$0< \ep' \ll 1$ so that
\begin{align*}
&\Qbpp \geq (1-\ep') \sum_{J\in\I_q'}\sum_{j=1}^{n-1} \|\Lb_j\vp_J\|_\lp^2
+\sum_{J\in\I_q'}\sum_{j\in J}\bigg[ \Rre\big\{ (c_{jj}T\vp_J,\vp_{J})_{\lp} \big\} 
\\ &+ \frac 12\big( (\Lb_j L_j(\lp)+L_j\Lb_j(\lp))\vp_J,\vp_{J})_\lp \big)
+ \frac 12\sum_{\ell=1}^{n-1}\big( (d_{jj}^\ell L_\ell(\lp) + \bar d_{jj}^\ell \bar L_\ell(\lp))\vp_J,\vp_{J}\big)_\lp \bigg]\\
&- \sum_{J,J'\in \I_q'}\sum_{\atopp{1\leq j,k\leq n-1}{j\neq k}} \ep^{kJ}_{jJ'}\bigg[ \Rre\big\{ (c_{jk}T\vp_J,\vp_{J'})_{\lp} \big\} 
+ \frac 12\big( (\Lb_k L_j(\lp)+L_j\Lb_k(\lp))\vp_J,\vp_{J'})_\lp \big) \\
&+ \frac 12\sum_{\ell=1}^{n-1}\big( (d_{jk}^\ell L_\ell(\lp) + \bar d_{kj}^\ell \bar L_\ell(\lp))\vp_J,\vp_{J'}\big)_\lp
 \bigg] + O(\|\vp\|_0^2).
\end{align*}
\end{lem}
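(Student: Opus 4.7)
The plan is to carry out the standard Kohn--Morrey--H\"ormander integration by parts, adapted to the weighted inner product $(\cdot,\cdot)_{\lp}$. First I would expand $\dbarb\vp$ and $\dbarbsp\vp$ in the local frame via the formulas displayed just before the lemma. In the expansions of $\|\dbarb\vp\|_{\lp}^2$ and $\|\dbarbsp\vp\|_\lp^2$, the zero-order $m^J_K$ and $\overline{m^I_J}$ pieces give only cross terms of size $O(\|\vp\|_0 \|\Lb_j \vp_J\|_\lp)$, which the small-constant/large-constant trick absorbs into $\ep'\sum_{j,J}\|\Lb_j \vp_J\|_\lp^2 + O(\|\vp\|_0^2)$ for $\ep' \ll 1$.

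Then, starting from the principal term of $\|\dbarbsp\vp\|_\lp^2$, namely $\sum_{I,j,J,k,J'} \ep^{jI}_J \ep^{kI}_{J'} (\Lbap_j\vp_J, \Lbap_k\vp_{J'})_\lp$, I integrate by parts once in $(\cdot,\cdot)_\lp$ to move $\Lbap_k$ back to $\Lb_k$, use the identity $\Lb_k\Lbap_j = \Lbap_j \Lb_k + [\Lb_k,\Lbap_j]$, and integrate by parts a second time to produce a piece $(\Lb_k\vp_J,\Lb_j\vp_{J'})_\lp$ together with a commutator piece $([\Lb_k,\Lbap_j]\vp_J,\vp_{J'})_\lp$. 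Combining this with the analogous expansion
\[
\|\dbarb\vp\|_\lp^2 = \sum_{K,j,J,k,J'}\ep^{jJ}_K \ep^{kJ'}_K (\Lb_j\vp_J, \Lb_k\vp_{J'})_\lp + (\text{l.o.t.})
\]
and applying the standard combinatorial identity for the $\ep$-symbols, the ``barred derivative'' pairs collapse to the diagonal $\sum_{j,J}\|\Lb_j\vp_J\|_\lp^2$ (losing $\ep'$ from the sc/lc absorption), leaving a commutator contribution
\[
-\sum_{J'\in\I_{q-1}'} \sum_{j,k=1}^{n-1} ([\Lbap_j,\Lb_k]\vp_{jJ'},\vp_{kJ'})_\lp + O(\|\vp\|_0^2),
\]
understood via antisymmetric extension in the $jJ'$, $kJ'$ indices.

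Next I substitute the formula for $[\Lbap_j,\Lb_k]$ displayed directly above the lemma statement. The pieces $d_{jk}^\ell L_\ell$ and $\bar d_{kj}^\ell \Lb_\ell$ applied to $\vp$ are first order and are absorbed via sc/lc into the $\ep'$ term and $O(\|\vp\|_0^2)$. Since $\Qbpp$ is real, I take real parts of the bilinear form; Hermiticity on the index pair $((j,J),(k,J'))$ allows me to symmetrize $\Lb_k L_j(\lp) = \tfrac12(\Lb_k L_j + L_j\Lb_k)(\lp) + \tfrac12[\Lb_k,L_j](\lp)$, and the structure equation \eqref{eqn:L, Lb adjoint 1} rewrites $[\Lb_k,L_j](\lp)$ in terms of $c_{jk}T(\lp)$ and $d_{jk}^\ell L_\ell(\lp), \bar d_{kj}^\ell \Lb_\ell(\lp)$. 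Together these pieces redistribute into exactly the coefficients $\tfrac12(d_{jk}^\ell L_\ell(\lp) + \bar d_{kj}^\ell \bar L_\ell(\lp))$ appearing in the lemma. Splitting diagonal ($j=k$, $J=J'$) versus off-diagonal ($j\ne k$) indices then gives the displayed inequality.

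The main obstacle is the careful sign bookkeeping: tracking how the $\ep$-symbol combinatorics interact with the two integrations by parts, and how the Hermitian symmetrization on the bilinear form redistributes the $T(\lp)$ and $L_\ell(\lp)$ subterms from the bracket $[\Lb_k,L_j](\lp)$ so that every coefficient lands with the exact prefactor $\tfrac12$ and sign stated in the lemma, with no leftover commutator remainders beyond the $O(\|\vp\|_0^2)$ error.
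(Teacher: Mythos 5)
Your outline of the Kohn--Morrey--H\"ormander computation is correct through the stage where one lands on
\[
\Qbpp \geq (1-\ep)\sum_{J,j}\|\Lb_j\vp_J\|_\lp^2
+ \sum_{J}\sum_{j\in J}\big([\Lb_j,\Lbap_j]\vp_J,\vp_J\big)_\lp
+ \sum_{J,J'}\sum_{j\neq k}\ep^{kJ}_{jJ'}\big([\Lbap_j,\Lb_k]\vp_J,\vp_{J'}\big)_\lp
+ O(\|\vp\|_0^2),
\]
which is exactly the paper's \eqref{eqn:Qbpp first est}. After that your treatment of the commutator goes wrong in a way that does matter.

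You claim the pieces $d_{jk}^\ell L_\ell$ and $\bar d_{kj}^\ell\Lb_\ell$ appearing in $[\Lbap_j,\Lb_k]$ ``are first order and are absorbed via sc/lc into the $\ep'$ term and $O(\|\vp\|_0^2)$.'' This works for $\bar d_{kj}^\ell\Lb_\ell\vp_J$, since the barred derivative is exactly what $(1-\ep')\sum\|\Lb_\ell\vp_J\|_\lp^2$ controls, and the paper indeed handles it this way. It does \emph{not} work for $d_{jk}^\ell L_\ell\vp_J$: the unbarred derivative $L_\ell\vp_J$ is not controlled by any term on the right-hand side, and converting $L_\ell$ to $\Lb_\ell$ via $L_\ell=-\Lbap_\ell + L_\ell(\lp) - f_\ell$ and then $\|\Lbap_\ell\vp_J\|_\lp^2 = \|\Lb_\ell\vp_J\|_\lp^2 + ([\Lb_\ell,\Lbap_\ell]\vp_J,\vp_J)_\lp$ reintroduces an uncontrolled $T$-derivative. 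The correct move --- the one the paper makes --- is to integrate $L_\ell$ by parts once against the weight $e^{-\lp}$, producing $-\Lbap_\ell(d^\ell_{jk}\vp_J)$ (which is then absorbable) \emph{plus} the weight-derivative term $d^\ell_{jk}L_\ell(\lp)\vp_J$. That single integration by parts is precisely where the $\tfrac12\big(d^\ell_{jk}L_\ell(\lp)+\bar d^\ell_{kj}\Lb_\ell(\lp)\big)$ block in the lemma comes from, after the usual $\Rre z=\Rre\bar z$ symmetrization over the Hermitian index pair.

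Consequently your proposed alternative source of these terms --- symmetrizing $\Lb_kL_j(\lp)$ as $\tfrac12(\Lb_kL_j+L_j\Lb_k)(\lp)+\tfrac12[\Lb_k,L_j](\lp)$ and feeding $[\Lb_k,L_j](\lp)$ through \eqref{eqn:L, Lb adjoint 1} --- cannot be right. Taking the real part of the Hermitian double sum already replaces $\Lb_kL_j(\lp)$ by its Hermitian symmetrization $\tfrac12(\Lb_kL_j+L_j\Lb_k)(\lp)$; the antisymmetric piece $\tfrac12[\Lb_k,L_j](\lp)$ contributes only to the imaginary part and drops out. So that route produces neither the $L_\ell(\lp)$ coefficients nor the right signs, and at the same time you have thrown away (via the incorrect sc/lc absorption) the only mechanism that does produce them. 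You should restore the weighted integration by parts on the $d^\ell_{jk}L_\ell\vp_J$ piece before symmetrizing; with that correction the rest of the argument matches the paper.
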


\begin{proof}First, observe 
\[
(\dbarbsp\vp,\dbarbsp\vp)_\lp = \sum_{\atopp{I\in \I_{q-1}'}{J,J'\in\I_q'}}
\sum_{\atopp{1\leq j,k\leq n-1}{j\neq k}} \ep^{jI}_J \ep^{kI}_{J'}(\Lbap_j\vp_J,\Lbap_k \vp_{J'})_\lp 
+ O\big(\|\vp\|_{\lp}^2 + (\sum_{j=1}^{n-1}\|\Lb_j\vp\|_{\lp}^2)^{1/2}\|\vp\|_{\lp}\big).
\]
However, if $j\neq k$, then
$\ep^{jI}_J \ep^{kI}_{J'} = \ep^{kjI}_{kJ} \ep^{jkI}_{jJ'} = -\ep^{jkI}_{kJ} \ep^{jkI}_{jJ'} =
-\ep^{kJ}_{jJ'}$.
Consequently,
\begin{align*}
\|\dbarbsp\vp\|_{\lp}^2
=& \sum_{J\in\I_q'}\sum_{j\in J} \|\Lbap_j\vp_J\|_{\lp}^2 
- \sum_{J,J'\in\I_q'}\sum_{\atopp{1\leq j,k\leq n-1}{j\neq k}}\ep^{kJ}_{jJ'} (\Lbap_j\vp_J,\Lbap_k \vp_{J'})_\lp  \\
&+ O\big(\|\vp\|_{\lp}^2 + (\sum_{j=1}^{n-1}\|\Lb_j\vp\|_{\lp}^2)^{1/2}\|\vp\|_{\lp}\big) \\
=& \sum_{J\in\I_q'}\sum_{j\in J} \|\Lb_j\vp_J\|_{\lp}^2 + \sum_{J\in\I_q'}\sum_{j\in J} \big([\Lb_j,\Lbap_j]\vp_J,\vp_J\big)_\lp\\
&- \sum_{J,J'\in\I_q'}\sum_{\atopp{1\leq j,k\leq n-1}{j\neq k}}\ep^{kJ}_{jJ'} (\Lbap_j\vp_J,\Lbap_k \vp_{J'})_\lp
+ O\big(\|\vp\|_{\lp}^2 + (\sum_{j=1}^{n-1}\|\Lb_j\vp\|_{\lp}^2)^{1/2}\|\vp\|_{\lp}\big)
\end{align*}

Second, from the calculation of $\dbarb$ above, we compute
\begin{align*}
&\|\dbarb\vp\|_\lp^2 
= \sum_{\atopp{J,J'\in\I_q'}{K\in\I_{q+1}'}}\sum_{\atopp{1\leq j,k\leq n-1}{j\neq k}} \ep^{kJ}_K\ep^{jJ'}_K
(\Lb_k \vp_J,\Lb_j\vp_{J'})_\lp 
O\big(\|\vp\|_{\lp}^2 + (\sum_{j=1}^{n-1}\|\Lb_j\vp\|_{\lp}^2)^{1/2}\|\vp\|_{\lp}\big) \\
&= \sum_{J\in\I_q}\sum_{j\not\in J} \|\Lb_j \vp_J\|_\lp^2 + \sum_{J,J'\in\I_q}\sum_{\atopp{1\leq j,k\leq n-1}{j\neq k}} \ep^{kJ}_{jJ'}
(\Lb_k \vp_J,\Lb_j \vp_{J'})_\lp 
+ O\big(\|\vp\|_{\lp}^2 + (\sum_{j=1}^{n-1}\|\Lb_j\vp\|_{\lp}^2)^{1/2}\|\vp\|_{\lp}\big) \\
&= \sum_{J\in\I_q}\sum_{j\not\in J} \|\Lb_j \vp_J\|_\lp^2 + \sum_{J,J'\in\I_q}\sum_{\atopp{1\leq j,k\leq n-1}{j\neq k}} \ep^{kJ}_{jJ'}
(\Lbap_j \vp_J,\Lbap_k \vp_{J'})_\lp \\
&+ \sum_{J,J'\in\I_q}\sum_{\atopp{1\leq j,k\leq n-1}{j\neq k}} \ep^{kJ}_{jJ'} \big( [\Lbap_j,\Lb_k] \vp_J,\vp_{J'}\big)_\lp
+ O\big(\|\vp\|_{\lp}^2 + (\sum_{j=1}^{n-1}\|\Lb_j\vp\|_{\lp}^2)^{1/2}\|\vp\|_{\lp}\big).
\end{align*}
By a lc/sc argument, 
\[
\big(\sum_{j=1}^{n-1}\|\Lb_j\vp\|_{\lp}^2\big)^{1/2}\|\vp\|_{\lp}
\geq -\ep\sum_{j=1}^{n-1}\|\Lb_j\vp\|_{\lp}^2 - \frac 1\ep \|\vp\|_{\lp}^2,
\]
so adding together our computations yields
\begin{multline}\label{eqn:Qbpp first est}
\Qbpp \geq (1-\ep)\sum_{J\in\I_q'}\sum_{j=1}^{n-1} \|\Lb_j\vp_J\|_\lp^2
+ \sum_{J\in\I_q}\sum_{j\in J} \big([\Lb_j,\Lbap_j] \vp_J,\vp_J\big)_\lp \\
+\sum_{J,J'\in\I_q'}\sum_{\atopp{1\leq j,k\leq n-1}{j\neq k}} \ep^{kJ}_{jJ'} \big([\Lbap_j,\Lb_k]\vp_J,\vp_{J'}\big)_\lp + O(\|\vp\|_{\lp}^2).
\end{multline}

Recall that the commutator
\[
[\Lbap_j,\Lb_k] = -c_{jk}T - \sum_{\ell=1}^{n-1} \Big(d_{jk}^\ell L_\ell - \bar d_{kj}^\ell \Lb_\ell\Big) 
-\Lb_k L_j\lp +\Lb_kf_j ,
\]
and note that
\[
\big|\big( \bar d_{kj}^\ell \Lb_\ell\vp_J,\vp_{J'}\big)_\lp\big|  \leq 
\ep \|\Lb_\ell\vp_J\|_\lp^2 + C_\ep \|\vp\|_{\lp}^2.
\]
Consequently,
\begin{multline*}
\Qbpp \geq (1-\ep)\sum_{J\in\I_q'}\sum_{j=1}^{n-1} \|\Lb_j\vp_J\|_\lp^2 \\
+ \Rre\bigg\{ \sum_{J\in\I_q'}\sum_{j\in J}\bigg[ \big( c_{jj} T\vp_J,\vp_J\big)_\lp 
+ \sum_{\ell=1}^{n-1} \big( d^\ell_{jj} L_\ell \vp_J,\vp_J\big)_\lp 
+ \big(\Lb_j\Lb_j\lp \vp_J,\vp_J\big)_\lp\bigg] \bigg\} \\
- \Rre\bigg\{ \sum_{J,J'\in\I_q'}\sum_{\atopp{1\leq j,k\leq n-1}{j\neq k}} \ep^{kJ}_{jJ'}\bigg[ \big( c_{jk} T\vp_J,\vp_{J'}\big)_{\lp}  
+ \sum_{\ell=1}^{n-1} \big(d_{jk}^\ell L_\ell \vp_J,\vp_{J'}\big)_\lp 
+ \big(\Lb_k L_j\lp\vp_J,\vp_{J'}\big)_{\lp}\bigg]\bigg\}
+ O(\|\vp\|_0^2).
\end{multline*} 
Also,
\begin{multline*}
\ep^{kJ}_{jJ'} \Rre\big\{ \big(d_{jk}^\ell L_\ell \vp_J,\vp_{J'}\big)_\lp \big\}
= \ep^{kJ}_{jJ'} \Rre\big\{ \big(L_\ell (d_{jk}^\ell  \vp_J),\vp_{J'}\big)_\lp  
- \ep^{kJ}_{jJ'} \Rre\{ \big( L_\ell(d_{jk}^\ell)  \vp_J,\vp_{J'}\big)_\lp \big\} \\
= \ep^{kJ}_{jJ'} \Rre\big\{ \big(-\Lbap_\ell(d^\ell_{jk}\vp_J),\vp_{J'}\big)_{\lp} 
+ \big(d^\ell_{jk}L_\ell(\lp)\vp_J,\vp_{J'}\big)_\lp \big\}
+ O(\|\vp\|_{\lp}^2)\\
\geq -\ep \|\Lb_\ell\vp_{J'}\|_{\lp}^2 + \ep^{kJ}_{jJ'} \Rre\big\{ \big(d^\ell_{jk}L_\ell(\lp)\vp_j,\vp_k\big)_\lp \big\}
+O(\|\vp\|_{\lp}^2).
\end{multline*}
Recalling that $\Rre z = \Rre\z$ for any complex number $z$, we have
\begin{align*}
&\sum_{J,J'\in\I_q} \sum_{j,k,\ell=1}^{n-1}\ep^{kJ}_{jJ'}\Rre\Big\{ \big(d^\ell_{jk}L_\ell(\lp)\vp_J,\vp_{J'}\big)_\lp\Big\}\\
&= \frac {1}2  \sum_{J,J'\in\I_q'}
\sum_{j,k,\ell=1}^{n-1}\ep^{kJ}_{jJ'}\Rre\Big\{ \big(d^\ell_{jk}L_\ell(\lp)\vp_J,\vp_{J'}\big)_\lp
+  \big(d^\ell_{kj}L_\ell(\lp)\vp_{J'},\vp_{J}\big)_\lp\Big\}\\
&= \frac {1}2  \sum_{J,J'\in\I_q'}
\sum_{j,k,\ell=1}^{n-1}\ep^{kJ}_{jJ'}\Rre\Big\{ \big(d^\ell_{jk}L_\ell(\lp)\vp_J,\vp_{J'}\big)_\lp
+  \big(\vp_{J'},\bar d^\ell_{kj}\Lb_\ell(\lp)\vp_{J}\big)_\lp\Big\}\\
&= \frac {1}2  \sum_{J,J'\in\I_q'}
\sum_{j,k,\ell=1}^{n-1}\ep^{kJ}_{jJ'}\Rre\Big\{ \big(d^\ell_{jk}L_\ell(\lp)\vp_J,\vp_{J'}\big)_\lp
+ \overline{\big(\bar d^\ell_{kj}\Lb_\ell(\lp)\vp_{J}, \vp_{J'}\big)_\lp}\Big\}\\
&= \frac {1}2 \sum_{J,J'\in\I_q'}
\sum_{j,k,\ell=1}^{n-1}\ep^{kJ}_{jJ'} \Big(\big(d^\ell_{jk}L_\ell(\lp)\vp_J,\vp_{J'}\big)_\lp
+ \big(\bar d^\ell_{kj}\Lb_\ell(\lp)\vp_{J}, \vp_{J'}\big)_\lp\Big)\\
\end{align*}
Similarly,
\begin{multline*}
\Rre\bigg\{ \sum_{J,J'\in\I_q'}\sum_{\atopp{1\leq j,k\leq n-1}{j\neq k}} \ep^{kJ}_{jJ'} \big(\Lb_k L_j\lp\vp_J,\vp_{J'}\big)_{\lp} \bigg\} \\
= \frac {1}2 \sum_{J,J'\in\I_q'}\sum_{\atopp{1\leq j,k\leq n-1}{j\neq k}} \ep^{kJ}_{jJ'}\Big( \big(\Lb_k L_j\lp\vp_J,\vp_{J'}\big)_{\lp}
+ \big(L_j \Lb_k \lp\vp_J,\vp_{J'}\big)_{\lp} \Big)
\end{multline*}
\end{proof}

Next, we concentrate on the $\Qbmp$ term. 
\begin{lem}\label{lem:controlling Qbm}
Let $\vp$ be a $(0,q)$-form supported in $U$, $\vp\in\Dom(\dbarb)\cap\Dom(\dbarbs)$. There exists
$0< \ep' \ll 1$ so that
\begin{align*}
&\Qbmp \geq (1-\ep') \sum_{J\in\I_q'}\sum_{j=1}^{n-1}\Big[ \|\Lbam_j\vp_J\|_\lm^2
+\sum_{J\in\I_q'}\sum_{j\in J}\bigg[ \Rre\big\{ -(c_{jj}T\vp_J,\vp_{J})_{\lm} \big\} 
\\ &+ \frac 12\big( (\Lb_j L_j(\lm)+L_j\Lb_j(\lm))\vp_J,\vp_{J})_\lm \big)
+ \frac 12\sum_{\ell=1}^{n-1}\big( (d_{jj}^\ell L_\ell(\lm) + \bar d_{jj}^\ell \bar L_\ell(\lm))\vp_J,\vp_{J}\big)_\lm \bigg]\\
&+ \sum_{J,J'\in \I_q'}\sum_{\atopp{1\leq j,k\leq n-1}{j\neq k}} \ep^{kJ}_{jJ'}\bigg[ \Rre\big\{ -(c_{jk}T\vp_J,\vp_{J'})_{\lm} \big\} 
+ \frac 12\big( (\Lb_k L_j(\lm)+L_j\Lb_k(\lm))\vp_J,\vp_{J'})_\lm \big) \\
&+ \frac 12\sum_{\ell=1}^{n-1}\big( (d_{jk}^\ell L_\ell(\lm) + \bar d_{kj}^\ell \bar L_\ell(\lm))\vp_J,\vp_{J'}\big)_\lm
 \bigg] + O(\|\vp\|_0^2).
\end{align*}
\end{lem}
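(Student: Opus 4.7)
The plan is to mirror the proof of Lemma \ref{lem: controlling Qbp} with two systematic modifications: (a) replace the weight $e^{-\lp}$ by $e^{\lm}$ and the adjoint $\Lbap_j$ by $\Lbam_j$ throughout, and (b) since the statement is expressed in terms of $\|\Lbam_j\vp_J\|_\lm$ rather than $\|\Lb_j\vp_J\|_\lm$, I would perform the commutator exchange in the \emph{opposite} direction from the one used for $\Qbpp$. That is, instead of writing $\|\Lbap_j\vp_J\|_\lp^2 = \|\Lb_j\vp_J\|_\lp^2 + ([\Lb_j,\Lbap_j]\vp_J,\vp_J)_\lp$, I would use
\[
\|\Lb_j\vp_J\|_\lm^2 = \|\Lbam_j\vp_J\|_\lm^2 + ([\Lbam_j,\Lb_j]\vp_J,\vp_J)_\lm,
\]
which is the identity obtained by the same adjoint manipulation but read in the other direction. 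This will cause the commutator $[\Lbam_j,\Lb_k]$ (rather than its negative) to appear in the final expression, which explains the sign flip on the Levi form term $-(c_{jk}T\vp_J,\vp_{J'})_\lm$ and accounts for the $+L_j\lm$ contribution in the Hessian term.

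The concrete steps I would carry out are: first, expand $\|\dbarbsm\vp\|_\lm^2$ as $\sum_{J}\sum_{j\in J}\|\Lbam_j\vp_J\|_\lm^2 - \sum_{J,J'}\sum_{j\neq k}\ep^{kJ}_{jJ'}(\Lbam_j\vp_J,\Lbam_k\vp_{J'})_\lm$ plus controllable errors, exactly as in the $\lp$ case but using \eqref{eqn:dbarb adjoints} for $\dbarbsm$. Second, expand $\|\dbarb\vp\|_\lm^2$ as $\sum_J\sum_{j\notin J}\|\Lb_j\vp_J\|_\lm^2 + \sum_{j\neq k}\ep^{kJ}_{jJ'}(\Lb_k\vp_J,\Lb_j\vp_{J'})_\lm$ plus errors, and then move both adjoints inside: use $(\Lb_k\vp_J,\Lb_j\vp_{J'})_\lm = (\Lbam_j\vp_J,\Lbam_k\vp_{J'})_\lm + ([\Lbam_j,\Lb_k]\vp_J,\vp_{J'})_\lm$ for the off-diagonal terms and the display equation above for the diagonal terms. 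Third, add the two identities; the off-diagonal $(\Lbam_j\vp_J,\Lbam_k\vp_{J'})_\lm$ pieces cancel against the corresponding terms in $\|\dbarbsm\vp\|_\lm^2$ because $\ep^{jI}_J\ep^{kI}_{J'} = -\ep^{kJ}_{jJ'}$ when $j\neq k$, just as before. What remains is $(1-\ep')\sum_{J}\sum_{j=1}^{n-1}\|\Lbam_j\vp_J\|_\lm^2$ together with commutator terms indexed by both the diagonal $(j\in J)$ and off-diagonal $(j\neq k,\ \ep^{kJ}_{jJ'})$ contributions.

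Finally, I would substitute
\[
[\Lbam_j,\Lb_k] = -c_{jk}T - \sum_{\ell=1}^{n-1}\bigl(d_{jk}^\ell L_\ell - \bar d_{kj}^\ell\Lb_\ell\bigr) + \Lb_k L_j(\lm) + \Lb_k f_j
\]
into these commutator terms. The $\bar d_{kj}^\ell \Lb_\ell$ piece is absorbed by a standard small-constant/large-constant bound against $\|\Lb_\ell\vp\|_\lm^2$, which remains controllable because $\Qbmp$ already dominates $\sum_j\|\Lbam_j\vp_J\|_\lm^2$ (and hence $\sum_j\|\Lb_j\vp_J\|_\lm^2$ modulo $\|\vp\|_0^2$). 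The $d_{jk}^\ell L_\ell \vp_J$ pieces are integrated by parts against $\Lbam_\ell$ (exactly as on p.~\pageref{lem: controlling Qbp}) to convert $L_\ell$ to $L_\ell(\lm)$ at the cost of an $\ep\|\Lbam_\ell\vp\|_\lm^2$ term. The remaining $\Lb_k L_j(\lm)$ terms are then symmetrized using $\Re z = \Re\bar z$ together with $(\bar d_{kj}^\ell)$-style manipulations so that they appear in the stated form $\tfrac12(\Lb_k L_j\lm + L_j\Lb_k \lm)$ (and analogously for the diagonal $j\in J$ contribution).

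The main obstacle is purely bookkeeping: correctly tracking the sign flips that arise from exchanging $\Lb_j$ with $\Lbam_j$ (rather than with $\Lbap_j$), and verifying that every term in the statement, including the symmetrized Hessian-type expressions and the $d^\ell_{jk}$ corrections, matches what the systematic substitution produces. No new analytic input is needed beyond the adjoint formulas \eqref{eqn:dbarb adjoints} and the commutator identity for $[\Lbam_j,\Lb_k]$ displayed above.
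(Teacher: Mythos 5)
Your proposal follows essentially the same route as the paper: the paper likewise expands $\|\dbarb\vp\|_\lm^2$ and $\|\dbarbsm\vp\|_\lm^2$, cancels the off-diagonal $(\Lbam_j\vp_J,\Lbam_k\vp_{J'})_\lm$ terms via $\ep^{jI}_J\ep^{kI}_{J'}=-\ep^{kJ}_{jJ'}$, converts the remaining $\|\Lb_j\vp_J\|_\lm^2$ into $\|\Lbam_j\vp_J\|_\lm^2+\big([\Lbam_j,\Lb_j]\vp_J,\vp_J\big)_\lm$ (the ``opposite direction'' exchange you describe), and then substitutes the commutator formula for $[\Lbam_j,\Lb_k]$ and proceeds exactly as in Lemma \ref{lem: controlling Qbp}. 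One small correction: your parenthetical claim that $\Qbmp$ controls $\sum_j\|\Lb_j\vp_J\|_\lm^2$ ``modulo $\|\vp\|_0^2$'' is not literally true, since the difference $\big([\Lbam_j,\Lb_j]\vp_J,\vp_J\big)_\lm$ is first order and contains $c_{jj}T$; the clean fix is to integrate the $\bar d^\ell_{kj}\Lb_\ell$ error by parts onto $\Lbam_\ell$, just as you already do for the $d^\ell_{jk}L_\ell$ pieces, so that the lc/sc absorption lands on the $\|\Lbam_\ell\vp\|_\lm^2$ terms that dominate in this weighted case.
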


\begin{proof}
This lemma is proved with the same techniques as the previous lemma. By the argument leading up to
\eqref{eqn:Qbpp first est}, we have
\begin{align*}
\Qbmp &= \sum_{J\in\I_q'}\sum_{j\not\in J} \|\Lb_j \vp_J\|_\lm^2 
+ \sum_{J,J'\in\I_q}\sum_{\atopp{1\leq j,k\leq n-1}{j\neq k}} \ep^{kJ}_{jJ'}
(\Lbam_j \vp_J,\Lbam_k \vp_{J'})_\lm \\
&+ \sum_{J,J'\in\I_q}\sum_{\atopp{1\leq j,k\leq n-1}{j\neq k}} \ep^{kJ}_{jJ'} \big( [\Lbam_j,\Lb_k] \vp_J,\vp_{J'}\big)_\lm\\
&+ \sum_{J\in\I_q'}\sum_{j\in J} \|\Lbam_j\vp_J\|_{\lm}^2 
- \sum_{J,J'\in\I_q'}\sum_{\atopp{1\leq j,k\leq n-1}{j\neq k}}\ep^{kJ}_{jJ'} (\Lbam_j\vp_J,\Lbam_k \vp_{J'})_\lm  \\
&+ O\big(\|\vp\|_{\lm}^2 + (\sum_{J\in\I_q'}\sum_{j=1}^{n-1}\|\Lbam_j\vp_J\|_{\lm}^2)^{1/2}\|\vp\|_{\lm}\big)
\end{align*}
By integration by parts,
\[
\|\Lb_j\vp_J\|_{\lm}^2 = \|\Lbam_j\vp_J\|_\lm^2 + \big( [\Lbam_j,\Lb_j]\vp_J,\vp_J\big)_\lm.
\]
Thus,
\begin{align*}
&\Qbmp = \sum_{J\in\I_q'}\sum_{j=1}^{n-1}\|\Lbam_j \vp_J\|_\lm^2 
+ \sum_{J\in\I_q'}\sum_{j\not\in J}\big( [\Lbam_j,\Lb_j]\vp_J,\vp_J\big)_\lm \\
&+ \sum_{J,J'\in\I_q}\sum_{\atopp{1\leq j,k\leq n-1}{j\neq k}} \ep^{kJ}_{jJ'} \big( [\Lbam_j,\Lb_k] \vp_J,\vp_{J'}\big)_\lm
+O\big(\|\vp\|_{\lm}^2 + (\sum_{J\in\I_q'}\sum_{j=1}^{n-1}\|\Lbam_j\vp_J\|_{\lm}^2)^{1/2}\|\vp\|_{\lm}\big).
\end{align*}
Following the argument of Lemma \ref{lem: controlling Qbp}, we proceed as above.
\end{proof}

The significance of  the estimates in Lemma \ref{lem: controlling Qbp} and Lemma \ref{lem:controlling Qbm} is
demonstrated by the 
multilinear algebra in Appendix \ref{app:multilinear algebra}, and it highlights the need for \CRPq as well
as \CRPnq.

We need the following versions of the sharp G{\aa}rding inequality. This is Theorem 7.1 in \cite{Nic06} written
for forms. It can be proved by following proofs of Theorem 3.1 and Theorem 3.2 in \cite{LaNi66} line by line (making
the obvious modifications). 
\begin{thm}\label{thm: Garding}
If $P = (p_{jk}(z,D))$ is a matrix first order pseudodifferential operator. If $p(z,\xi)$ is Hermitian and
the sum of any collection of $q$ eigenvalues is nonnegative,
then there exists a constant $C>0$ so that for any $(0,q)$-form $u$,
\[
 \Rre\Big\{\sum_{J\in\I_q'} \big(p_{jj}(\cdot,D) u_J, u_J\big) 
- \sum_{J,J'\in\I_q'} \sum_{\atopp{1\leq j,k\leq m}{j\neq k}}
 \ep^{kJ}_{jJ'} \big( p_{jk}(\cdot,D) u_J,u_{J'}\big) \Big \}
\geq -C \|u\|^2.
\]
If $p(z,\xi)$ is Hermitian and
the sum of any collection of $(n-1-q)$ eigenvalues is nonnegative, then 
\[
 \Rre\Big\{\sum_{J\in\I_q'} \big(p_{jj}(\cdot,D) u_J, u_J\big) 
+ \sum_{J,J'\in\I_q'} \sum_{\atopp{1\leq j,k\leq m}{j\neq k}}
 \ep^{kJ}_{jJ'} \big( p_{jk}(\cdot,D) u_J,u_{J'}\big) \Big \}
\geq -C \|u\|^2.
\]
\end{thm}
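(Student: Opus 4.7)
The plan is to reduce Theorem~\ref{thm: Garding} to the matrix-valued sharp G\aa rding inequality of Lax--Nirenberg via an algebraic lifting from $\C^{n-1}$ to $\bigwedge^{q}\C^{n-1}$. The key algebraic input, which is the content of Appendix~\ref{app:multilinear algebra}, is that for any Hermitian matrix $H=(h_{jk})_{1\le j,k\le n-1}$, the quadratic form
\[
Q_q^-(H)(u) = \sum_{J\in\I_q'}\sum_{j\in J} h_{jj}|u_J|^2 - \sum_{J,J'\in\I_q'}\sum_{\atopp{1\le j,k\le n-1}{j\neq k}} \ep^{kJ}_{jJ'} h_{jk}\, u_J\overline{u_{J'}}
\]
on skew-symmetric $q$-tensors $u=(u_J)_{J\in\I_q'}$ coincides with the quadratic form of the induced Hermitian operator $H^{(q)}$ acting on $\bigwedge^{q}\C^{n-1}$, whose eigenvalues are precisely the sums of $q$ eigenvalues of $H$. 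The companion form $Q_{n-1-q}^+(H)$, obtained by flipping the sign in front of the off-diagonal sum, corresponds via Hodge duality on $\bigwedge^{\bullet}\C^{n-1}$ to the induced action on $\bigwedge^{n-1-q}\C^{n-1}$, whose eigenvalues are the sums of $(n-1-q)$ eigenvalues of $H$. This is where the sign discrepancy between the two halves of the theorem originates.

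Next, I would apply this identification pointwise to the Hermitian principal symbol $p(z,\xi)$. Under the first hypothesis, the induced matrix symbol $p^{(q)}(z,\xi)$ acting on $q$-form coefficient vectors is pointwise Hermitian and positive semidefinite; under the second hypothesis, the analogous statement holds for $p^{(n-1-q)}(z,\xi)$. Thus the compound first-order matrix operator acting on $(0,q)$-form coefficients, whose sesquilinear form is the expression inside $\Rre\{\cdot\}$ in the theorem, has nonnegative Hermitian principal symbol.

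At this stage I would invoke the matrix version of the sharp G\aa rding inequality, namely Theorems~3.1 and~3.2 of \cite{LaNi66}: for any first-order matrix pseudodifferential operator $P^{(q)}$ with pointwise Hermitian and positive semidefinite principal symbol,
\[
\Rre \la P^{(q)} u, u\ra \ge -C \|u\|^2,
\]
with $C$ depending only on seminorms of the symbol. Rewriting the left-hand side using the algebraic identity above reproduces exactly the bilinear expression in the statement of the theorem and yields the desired lower bound. The proof of the second inequality is obtained by the same argument applied to $p^{(n-1-q)}$; the plus sign in front of the off-diagonal $\ep^{kJ}_{jJ'}$ terms is the signature of the Hodge-dual exterior power representation.

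The main obstacle is the sign-bookkeeping in the algebraic identification: one must verify that the combinatorial factor $\ep^{kJ}_{jJ'}$ yields exactly the matrix entries of $H^{(q)}$ (with the minus sign) and of $H^{(n-1-q)}$ (with the plus sign) in the standard bases of the respective exterior powers, and that the diagonal contributions collect correctly after taking real parts. Once this identity is in hand --- which is the job of the multilinear algebra appendix --- the analytic step is a direct transcription of the Lax--Nirenberg argument to the matrix-valued first-order setting, just as the paper indicates.
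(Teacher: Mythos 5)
Your proposal is correct, but it packages the argument differently from the paper. The paper (following Nicoara) disposes of Theorem \ref{thm: Garding} by asserting that one can repeat the proofs of Theorems 3.1 and 3.2 of \cite{LaNi66} line by line with the obvious modifications, i.e.\ it re-runs the analytic argument in the form-valued setting; you instead reduce the statement to the already matrix-valued Lax--Nirenberg inequality by lifting the Hermitian symbol $p(z,\xi)$ to the induced symbol on the $\binom{n-1}{q}$-dimensional coefficient space, which is exactly the content of Lemmas \ref{lem:multilin for q at a time} and \ref{lem:multilin for n-1-q at a time}: the sesquilinear expression inside $\Rre\{\cdot\}$ (with the diagonal term read as $\sum_{J}\sum_{j\in J}(p_{jj}u_J,u_J)$, and noting that the $J=J'$, $j\neq k$ terms vanish since $\ep^{kJ}_{jJ}=0$) is the form of a first-order system whose principal symbol is Hermitian with eigenvalues equal to sums of $q$ (resp.\ $n-1-q$, for the $+$ sign) eigenvalues of $p(z,\xi)$, hence positive semidefinite under the stated hypotheses, and the system version of the sharp G{\aa}rding inequality applies directly. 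This buys a genuinely black-box proof with no re-derivation, at the cost of moving the multilinear algebra of the appendix (which the paper only invokes downstream, in Propositions \ref{prop:local results for Pspl} and \ref{prop:local results for Psml}) into the proof of the theorem itself, and of the index/sign bookkeeping you flag; since $(B^q_{JJ'})$ and $(D^q_{JJ'})$ are Hermitian, the transpose ambiguity in identifying the lifted operator's matrix does not affect positivity, so that bookkeeping is harmless. Both routes ultimately rest on the same analytic core, the Lax--Nirenberg inequality for Hermitian positive semidefinite matrix symbols, so the constant $C$ has the same dependence on finitely many symbol seminorms in either version.
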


\begin{cor}\label{cor:Garding}
Let $R$ be a first order pseudodifferential operator such that  $\sigma(R)\geq\kappa$ where $\kappa$ is some positive
constant and $(h_{jk})$ a hermitian matrix (that does not depend on $\xi$).
Then there exists a constant $C$ such that if the sum of any $q$ eigenvalue of $(h_{jk})$ is nonnegative, then
\begin{multline*}
\Rre\Big\{\sum_{J\in\I_q'} \big(h_{jj}R u_J, u_J\big) 
- \sum_{J,J'\in\I_q'} \sum_{\atopp{1\leq j,k\leq m}{j\neq k}}
 \ep^{kJ}_{jJ'} \big( h_{jk}R u_J,u_{J'}\big) \Big \} \\
\geq \kappa  \Rre\Big\{\sum_{J\in\I_q'} \big((h_{jj} u_J, u_J\big) 
- \sum_{J,J'\in\I_q'} \sum_{\atopp{1\leq j,k\leq m}{j\neq k}}
 \ep^{kJ}_{jJ'} \big( h_{jk} u_J,u_{J'}\big) \Big \} -C \|u\|^2 .
\end{multline*}
and if the
the sum of any collection of $(n-1-q)$ eigenvalues of $(h_{jk})$ is nonnegative, then
\begin{multline*}
\sum_{jk} \Rre\Big\{\sum_{J\in\I_q'} \big((h_{jj}R u_J, u_J\big) 
+ \sum_{J,J'\in\I_q'} \sum_{\atopp{1\leq j,k\leq m}{j\neq k}}
 \ep^{kJ}_{jJ'} \big( h_{jk}R u_J,u_{J'}\big) \Big \} \\
\geq \kappa \sum_{jk} \Rre\Big\{\sum_{J\in\I_q'} \big((h_{jj} u_J, u_J\big) 
+ \sum_{J,J'\in\I_q'} \sum_{\atopp{1\leq j,k\leq m}{j\neq k}}
 \ep^{kJ}_{jJ'} \big( h_{jk} u_J,u_{J'}\big) \Big \} -C \|u\|^2 .
\end{multline*}
\end{cor}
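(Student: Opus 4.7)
My plan is to reduce directly to Theorem \ref{thm: Garding} by applying it to the matrix first-order pseudodifferential operator whose symbol is $p_{jk}(x,\xi) := h_{jk}\bigl(\sigma(R)(x,\xi) - \kappa\bigr)$. Since $(h_{jk})$ is Hermitian and $\sigma(R)(x,\xi) - \kappa$ is a nonnegative real scalar (by the hypothesis $\sigma(R) \geq \kappa$), the symbol matrix $(p_{jk}(x,\xi))$ is Hermitian for each $(x,\xi)$. More importantly, its eigenvalues are those of $(h_{jk})$ multiplied by the nonnegative factor $\sigma(R)(x,\xi) - \kappa$, so if any sum of $q$ (respectively $n-1-q$) eigenvalues of $(h_{jk})$ is nonnegative, the corresponding property is automatically inherited by $(p_{jk}(x,\xi))$.

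Having checked the hypotheses, I invoke Theorem \ref{thm: Garding} (first or second part, depending on whether we are proving the $q$-eigenvalue or $(n-1-q)$-eigenvalue statement) for the operator $P = (p_{jk}(\cdot,D))$ to obtain
\[
\Rre\Big\{\sum_{J\in\I_q'} \bigl(p_{jj}(\cdot,D) u_J, u_J\bigr)
\mp \sum_{J,J'\in\I_q'} \sum_{\atopp{1\leq j,k\leq m}{j\neq k}} \ep^{kJ}_{jJ'} \bigl( p_{jk}(\cdot,D) u_J, u_{J'}\bigr) \Big\}
\geq -C_1 \|u\|^2,
\]
with the minus sign in the $q$-case and the plus sign in the $(n-1-q)$-case.

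Now I compare $p_{jk}(\cdot,D)$ with the operator $h_{jk} R - \kappa h_{jk}$. By standard symbol calculus, multiplication by $h_{jk}$ composed with the first-order operator $R$ has principal symbol $h_{jk}\sigma(R)$ and a lower-order remainder of order zero; similarly $\kappa h_{jk}$ is already of order zero. Consequently $p_{jk}(\cdot,D) = h_{jk} R - \kappa h_{jk} + E_{jk}$, where each $E_{jk}$ is bounded on $L^2(M)$, so $|(E_{jk} u_J, u_{J'})| \leq C_2\|u\|^2$. Substituting $h_{jk}R - \kappa h_{jk}$ for $p_{jk}(\cdot,D)$ in the inequality above, absorbing the $E_{jk}$ errors into the constant, and moving the $\kappa h_{jk}$-quadratic form to the right-hand side yields exactly the two stated inequalities with $C = C_1 + C_2$.

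The main (minor) obstacle is bookkeeping: one must verify that the zeroth-order commutator/remainder terms produced when passing between the operator-theoretic form $h_{jk}R - \kappa h_{jk}$ and the symbol-defined operator $p_{jk}(\cdot,D)$ really are $L^2$-bounded and hence absorbable into $-C\|u\|^2$, and one must check that the eigenvalue condition is preserved at every $(x,\xi)$ rather than only at $(x,\xi)$ where $\sigma(R) > \kappa$ strictly --- but this is immediate because multiplication of a Hermitian matrix by a nonnegative scalar scales all eigenvalues by that scalar. No deeper input is needed beyond Theorem \ref{thm: Garding} itself.
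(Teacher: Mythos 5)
Your proof is correct and follows essentially the same route as the paper, which simply applies Theorem \ref{thm: Garding} to the matrix operator with entries $p_{jk}=h_{jk}(R-\kappa)$. Your additional bookkeeping about the zeroth-order discrepancy between the symbol-defined operator and $h_{jk}R-\kappa h_{jk}$, absorbed into $-C\|u\|^2$, just makes explicit what the paper leaves implicit.
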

Note that $(h_{jk})$ may be a matrix-valued function in $z$ but may not depend on $\xi$.
\begin{proof} Apply the previous theorem with $P$ where $p_{jk} = h_{jk}(R-\kappa)$.
\end{proof}

We need G{\aa}rding's inequality to prove the following analog to Lemma 4.12 in  \cite{Nic06}.
\begin{lem} \label{lem:T bound lp}
Let $M$ be a weakly pseudoconvex CR-manifold and $\vp$ a $(0,q)$-form supported on $U'$ so that
up to a smooth term $\hat\vp$ is supported in $\Cp$. Then 
\begin{multline*}
\Rre\Big\{\sum_{J\in\I_q'} \big(c_{jj}T \vp_J, \vp_J\big)_\lp 
- \sum_{J,J'\in\I_q'} \sum_{\atopp{1\leq j,k\leq m}{j\neq k}}
 \ep^{kJ}_{jJ'} \big( c_{jk}T \vp_J,\vp_{J'}\big)_\lp \Big \} \\
\geq A  \Rre\Big\{\sum_{J\in\I_q'} \big((c_{jj} \vp_J, \vp_J\big)_\lp 
- \sum_{J,J'\in\I_q'} \sum_{\atopp{1\leq j,k\leq m}{j\neq k}}
 \ep^{kJ}_{jJ'} \big( c_{jk} \vp_J,\vp_{J'}\big)_\lp \Big \} 
+ O(\|\vp\|_{\lp}^2) + O_A (\|\tzn\tPsol\vp\|_0^2).
\end{multline*}
where the constant in $ O(\|\vp\|_{\lp}^2)$ does not depend on $A$.
\end{lem}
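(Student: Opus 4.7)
The plan is to apply the matrix Gårding inequality (Corollary~\ref{cor:Garding}) with the first-order operator $T$ paired with the Hermitian matrix $(c_{jk})$. Weak pseudoconvexity of $M$ makes $(c_{jk})$ positive semidefinite, so any sum of $q$ of its eigenvalues is nonnegative and the first case of Corollary~\ref{cor:Garding} is available. In the local coordinates of Lemma~\ref{lem:good coordinates}, the principal symbol of $T$ is, up to a sign that I fix once and for all so that it is positive on $\Cp$, a real multiple of $\xi_{2n-1}$. On the microlocal support of $\hat\vp$ the inequalities $\xi_{2n-1}\geq\tfrac12|\xi'|$ and $|\xi|\geq A$ force $\xi_{2n-1}\geq A/\sqrt{5}$, so $\sigma(T)\geq\kappa A$ there with $\kappa=1/\sqrt{5}$ independent of $A$.

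Since $\sigma(T)$ has no uniform global lower bound, I would first construct an auxiliary first-order operator $R$ whose real symbol satisfies $\sigma(R)\geq\kappa A$ everywhere and which agrees with $\pm T$ on $\Cp$. A concrete choice is a smoothing of $R=\pm\tPspl^{*}T\tPspl+\kappa A(I-\tPspl^{*}\tPspl)$, with sign chosen to match the sign of $\sigma(T)$ on $\Cp$. The key properties are that $\sigma(R)\geq\kappa A$ globally and that $(T-R)\vp$ consists only of commutators between $T$ and the cutoff $\tPspl$, whose symbols are supported in the transition region between $\Cp$ and its complement. Because $\tpsol\equiv 1$ on this transition region by the construction in Section~\ref{sec:local coordinates}, pseudolocality yields
\[
\|(T-R)\vp\|_\lp\leq C_A\|\tzn\tPsol\vp\|_0+O_A(\|\vp\|_{-1}).
\]

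Applying Corollary~\ref{cor:Garding} to $R$ and $(c_{jk})$ then gives
\begin{align*}
&\Rre\Bigl\{\sum_{J\in\I_q'}(c_{jj}R\vp_J,\vp_J)_\lp-\sum_{\substack{J,J'\in\I_q'\\ j\neq k}}\ep^{kJ}_{jJ'}(c_{jk}R\vp_J,\vp_{J'})_\lp\Bigr\}\\
&\qquad\geq\kappa A\,\Rre\Bigl\{\sum_{J\in\I_q'}(c_{jj}\vp_J,\vp_J)_\lp-\sum_{\substack{J,J'\in\I_q'\\ j\neq k}}\ep^{kJ}_{jJ'}(c_{jk}\vp_J,\vp_{J'})_\lp\Bigr\}-C\|\vp\|_\lp^2,
\end{align*}
with a Gårding constant $C$ that depends only on the Levi coefficients, their first derivatives, and zero-order commutators involving $e^{-\lp}$, none of which involve $A$. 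Writing $T=R+(T-R)$ on the left and handling the cross terms $(c_{jk}(T-R)\vp_J,\vp_{J'})_\lp$ by an $A$-independent small-constant/large-constant split, $2ab\leq\delta a^2+\delta^{-1}b^2$ with $\delta$ fixed, converts them into an $O(\|\vp\|_\lp^2)$ contribution plus an $O_A(\|\tzn\tPsol\vp\|_0^2)$ contribution, yielding the claimed inequality after absorbing $\kappa$ into the notation.

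The main obstacle is the careful bookkeeping around $R$: one must verify that all $A$-dependent error terms, coming from commutators of $T$ with the $A$-scaled cutoff $\tPspl$, are routed into the $O_A(\|\tzn\tPsol\vp\|_0^2)$ slot, while the genuinely intrinsic order-zero commutators (of $T$ with the $c_{jk}$ and with the weight $e^{-\lp}$) contribute only to the $A$-independent $O(\|\vp\|_\lp^2)$ slot. A secondary subtlety is matching the sign of $\sigma(T)$ on $\Cp$ to the sign chosen in $R$ so that the product matrix symbol $(c_{jk})\sigma(R)$ is genuinely positive semidefinite on the microlocal support of $\vp$, which is dictated by the orientation convention used to define $\Cp$.
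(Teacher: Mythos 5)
Your proposal is correct in substance and rests on the same two pillars as the paper's proof: microlocalization by the dominating cutoff $\tPspl$ (so that the relevant symbol is $e^{-\lp}c_{jk}\,\xi_{2n-1}$ with $\xi_{2n-1}\gtrsim A$ on $\Cp$), followed by the matrix sharp G{\aa}rding inequality of Corollary \ref{cor:Garding} applied to the weighted Levi matrix, with transition-region errors routed into $O_A(\|\tzn\tPsol\vp\|_0^2)$ and intrinsic order-zero errors into an $A$-independent $O(\|\vp\|_\lp^2)$. The packaging differs: the paper inserts $\tPspla\tPspl$ inside the inner product, pushes everything into the unweighted pairing, and computes the principal symbol of the composed operator $\tz\tPspla\tz^2e^{-\lp}c_{jk}T\tPspl$ before invoking the corollary, whereas you compare $T$ with an auxiliary globally elliptic first-order operator $R$ built from $\tPspl$. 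Your variant is arguably a cleaner way to make the appeal to Corollary \ref{cor:Garding} literal, since the corollary formally requires a global lower bound on the symbol, which $T$ by itself does not have; the cost is that you must justify the comparison $\|(T-R)\vp\|_\lp\lesssim_A\|\tzn\tPsol\vp\|_0+\|\vp\|_{-1}$, which is the same commutator/pseudolocality bookkeeping the paper performs when it discards terms supported in $\Co\setminus\Cp$.

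One caveat: as written, the claim that $\sigma(R)=\tpspl^2\xi_{2n-1}+\kappa A(1-\tpspl^2)\geq\kappa A$ \emph{everywhere} is not quite true, because the cutoffs $\pspl,\tpspl$ are only conic for $|\xi|\geq A$; on the ball $\{|\xi|\lesssim A\}$ one can have $\tpspl=1$ while $\xi_{2n-1}<\kappa A$, and on that same region the first-order symbol seminorms of $R$ (hence the G{\aa}rding constant) are not obviously uniform in $A$, which threatens the requirement that the $O(\|\vp\|_\lp^2)$ constant be $A$-independent. This is repairable by exactly the mechanism you already invoke: the low-frequency ball lies where $\tpsol\equiv1$, so that portion of $\vp$ can be split off first and absorbed into the $O_A(\|\tzn\tPsol\vp\|_0^2)$ term, after which $R$ need only be modified (or $\sigma(R)$ only bounded below) on the conic region $|\xi|\geq A$. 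The paper's own proof is informal at the same spot (it asserts $\xi_{2n-1}\geq A$ on $\Cp$ and applies the corollary ``with $T$ as $R$''), so this is a gap of presentation rather than of method, but you should state the low-frequency splitting explicitly rather than claim a global symbol bound. The factor $A/\sqrt5$ versus $A$ is harmless, as is your sign hedging: the convention that $\xi_{2n-1}$ is dual to $T$ already fixes $\sigma(T)=\xi_{2n-1}>0$ on $\Cp$.
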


\begin{proof} Let $\tPspl$ be a pseudodifferential operator of order zero whose symbol dominates
$\hat\phi$ (up to a smooth error) and is supported in $\tCp$.
By the support conditions of $\vp$ and $\hat\vp$,
\begin{align*}
\sum_{J\in\I_q'}& \big(c_{jj}T \vp_J, \vp_J\big)_\lp 
- \sum_{J,J'\in\I_q'} \sum_{\atopp{1\leq j,k\leq m}{j\neq k}}
 \ep^{kJ}_{jJ'} \big( c_{jk}T \vp_J,\vp_{J'}\big)_\lp \\
=& \sum_{J\in\I_q'} \big(c_{jj}T \vp_J, (\tPspla\tPspl + (Id- \tPspla\tPspl))\vp_J\big)_\lp  \\
&- \sum_{J,J'\in\I_q'} \sum_{\atopp{1\leq j,k\leq m}{j\neq k}} 
 \ep^{kJ}_{jJ'} \big( c_{jk}T \vp_J,(\tPspla\tPspl + (Id- \tPspla\tPspl))\vp_{J'}\big)_\lp\\
=& \sum_{J\in\I_q'} \big(c_{jj}T \vp_J, \tPspla\tPspl\vp_J\big)_\lp  \\
&- \sum_{J,J'\in\I_q'} \sum_{\atopp{1\leq j,k\leq m}{j\neq k}} 
 \ep^{kJ}_{jJ'} \big( c_{jk}T \vp_J,\tPspla\tPspl\vp_{J'}\big)_\lp  + smoother\ terms \\
=& \sum_{J\in\I_q'} \big( \tz e^{-\lp}c_{jj}  \tPspl T \vp_J, \tz\tPspl\vp_J\big)_0  \\
&- \sum_{J,J'\in\I_q'} \sum_{\atopp{1\leq j,k\leq m}{j\neq k}} 
 \ep^{kJ}_{jJ'} \big( \tz e^{-\lp} c_{jk} \tPspl T \vp_J,\tz\tPspl\vp_{J'}\big)_0  + smoother\ terms \\
=& \sum_{J\in\I_q'} \big( \tz\tPspla\tz^2 e^{-\lp}c_{jj}  \tPspl T \vp_J, \vp_J\big)_0  \\
&- \sum_{J,J'\in\I_q'} \sum_{\atopp{1\leq j,k\leq m}{j\neq k}} 
 \ep^{kJ}_{jJ'} \big( \tz\tPspla\tz^2 e^{-\lp} c_{jk} \tPspl T \vp_J,\vp_{J'}\big)_0  + smoother\ terms \\
&= \sum_{j,k=1}^{n-1} (\tz\tPspla\tz^2e^{-\lp}c_{jk}T\tPspl\vp_j,\vp_k)_0 + smoother\ terms.
\end{align*}
where smoother terms are $O(\|\vp\|_{-1}^2)$ or better (and the constant may depend on $A$). One fact quickly computed
and used implicitly above is that $\sigma(\tPspla T) = \sigma(T\tPspla) = \xi_{2n-1}\tpspl(\xi)$ 
(up to smooth terms) when applied to $\vp$. Next, we will compute
$\sigma(\tPspla\tzn^2e^{-\lp}c_{jk})$. $\sigma(\tPspl)\equiv 1$ on $\Cp$, so $\sigma(\tPspla)\equiv 1$ on $\Cp$ as well,
and it follows that $\sigma(\tPspla) = \tpspl(\xi)$ up to terms supported in $\Co\setminus\Cp$. Thus, up to errors
on $\Co\setminus\Cp$,
\[
\sigma(\tPspla\tzn^2e^{-\lp}c_{jk}) = \sum_{\beta\geq 0}\frac{1}{\beta!}\p^\beta_\xi \tpspl(\xi) D^\beta_x(\tzn^2e^{-\lp}c_{jk}) 
= \tpspl(\xi)\tzn^2e^{-\lp}c_{jk},
\]
and on $\Cp$,
\begin{multline*}
\sigma(\tPspla\tzn^2e^{-\lp}c_{jk} T\Pspl) =\sum_{\alpha}\frac{1}{\alpha!} \p^\alpha_\xi\sigma(\tPspla\tzn^2e^{-\lp}c_{jk})
D^{\alpha}_x\sigma(T\tPspl)\\
= \sum_{\alpha}\frac{1}{\alpha!} \p^\alpha_\xi (\tpspl(\xi)\tzn^2e^{-\lp}c_{jk})
D^{\alpha}_x\sigma(\xi_{2n-1}\tpspl(\xi)) = \tzn^2e^{-\lp} c_{jk}\xi_{2n-1}.
\end{multline*}
By construction, $\xi_{2n-1}\geq A$ on $\Cp$ and $(\tzn e^{-\lp} c_{jk})$ is positive semi-definite (and hence the 
sum of any $q$ eigenvalues is nonnegative), so we can apply
Corollary \ref{cor:Garding} with $T$ as $R$ and $(e^{-\lp} c_{jk})$ as $(h_{jk})$ to conclude that there
exists a constant $C$ independent of $A$ so that
\begin{align*}
\sum_{J\in\I_q'}& \big(c_{jj}T \vp_J, \vp_J\big)_\lp 
- \sum_{J,J'\in\I_q'} \sum_{\atopp{1\leq j,k\leq m}{j\neq k}}
 \ep^{kJ}_{jJ'} \big( c_{jk}T \vp_J,\vp_{J'}\big)_\lp \\
 \geq & A\bigg( \sum_{J\in\I_q'} \big( \tz^2 e^{-\lp} c_{jj} \vp_J, \vp_J\big)_0
- \sum_{J,J'\in\I_q'} \sum_{\atopp{1\leq j,k\leq m}{j\neq k}}
 \ep^{kJ}_{jJ'} \big( \tz^2 e^{-\lp} c_{jk} \vp_J,\vp_{J'}\big)_0  \bigg) \\
&- C\|\vp\|_{\lp}^2 + O(\|\vp\|_{-1}^2) +
O_A(\|\tzn\tPsol\vp\|_0^2\\
=& A \sum_{J\in\I_q'} \big(c_{jj} \vp_J, \vp_J\big)_\lp 
- \sum_{J,J'\in\I_q'} \sum_{\atopp{1\leq j,k\leq m}{j\neq k}}
 \ep^{kJ}_{jJ'} \big( c_{jk} \vp_J,\vp_{J'}\big)_\lp  +  O(\|\vp\|_{0}^2) +
O_A(\|\tz\tPsol\vp\|_0^2).
\end{align*}
\end{proof}

By the same argument, we have the following:
\begin{lem}\label{lem:T bound lm}
Let $\vp$ be a $(0,q)$-form supported on $U$ so that up to a smooth term, $\hat\vp$ is supported in $\Cm$, then
\begin{multline*}
\sum_{J\in\I_q'} \big(c_{jj}(-T) \vp_J, \vp_J\big)_\lm 
+ \sum_{J,J'\in\I_q'} \sum_{\atopp{1\leq j,k\leq m}{j\neq k}}
 \ep^{kJ}_{jJ'} \big( c_{jk}(-T) \vp_J,\vp_{J'}\big)_\lm \\
\geq  A\bigg(\sum_{J\in\I_q'} \big(c_{jj} \vp_J, \vp_J\big)_\lm 
+ \sum_{J,J'\in\I_q'} \sum_{\atopp{1\leq j,k\leq m}{j\neq k}}
 \ep^{kJ}_{jJ'} \big( c_{jk} \vp_J,\vp_{J'}\big)_\lm \bigg)
+ O(\|\vp\|_{\lm}^2) + O_A (\|\tzn\tPsol\vp\|_0^2).
\end{multline*}
\end{lem}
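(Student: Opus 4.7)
The plan is to mirror the proof of Lemma \ref{lem:T bound lp} with three systematic changes: replace the plus-cone cutoff $\tPspl$ by the minus-cone cutoff $\tPsml$, replace the weight $\lp$ by $\lm$, and replace the vector field $T$ by $-T$. Since $\hat\vp$ is essentially supported in $\Cm$ and by construction the symbol of $\tPsml$ equals $1$ on $\Cm$, we can insert $\tPsmla\tPsml + (Id-\tPsmla\tPsml)$ and discard the second piece as a smoothing error (controlled by $O_A(\|\tzn\tPsol\vp\|_0^2)$ together with terms that are $O(\|\vp\|_{-1}^2)$). After moving $\tPsmla$ and $\tPsml$ onto the factors (using the selfadjointness of each piece together with the weighted inner product), the off-diagonal term acquires the operator $\tzn\tPsmla\tzn^2 e^{\lm} c_{jk}(-T)\tPsml$ acting on $\vp$.

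Next I would compute the relevant symbols on $\Cm$. Because $\tpsml$ is identically $1$ on $\Cm$, the composition rule gives, up to terms supported in $\Co\setminus\Cm$ (absorbed by $O_A(\|\tzn\tPsol\vp\|_0^2)$),
\[
\sigma\!\left(\tPsmla\tzn^2 e^{\lm} c_{jk}(-T)\tPsml\right) \;=\; \tzn^2 e^{\lm} c_{jk}\, (-\xi_{2n-1}).
\]
By the scaling convention $\psml(\xi)=\psm(\xi/A)$, we have $-\xi_{2n-1}\geq A$ on the support of $\tpsml$, so the symbol $R := -T$ satisfies $\sigma(R)\geq A$ on the relevant cone. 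Since $(c_{jk})$ is the Levi form and $M$ is weakly pseudoconvex, the matrix $(e^{\lm}c_{jk})$ is Hermitian and positive semidefinite, so in particular the sum of any $(n-1-q)$ of its eigenvalues is nonnegative.

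Now I would apply Corollary \ref{cor:Garding} in its second form (the $(n-1-q)$-eigenvalue version, which produces the $+$-sign on the off-diagonal contribution). Taking $(h_{jk}) = (e^{\lm}c_{jk})$, $R=-T$, and $\kappa = A$, this yields
\begin{align*}
&\sum_{J\in\I_q'}\!\big(c_{jj}(-T)\vp_J,\vp_J\big)_{\lm}
\;+\!\!\sum_{J,J'\in\I_q'}\sum_{\atopp{1\leq j,k\leq n-1}{j\neq k}}\!\!\ep^{kJ}_{jJ'}\big(c_{jk}(-T)\vp_J,\vp_{J'}\big)_{\lm} \\
&\qquad\geq\; A\left(\sum_{J\in\I_q'}\!\big(c_{jj}\vp_J,\vp_J\big)_{\lm}
+\!\!\sum_{J,J'\in\I_q'}\sum_{\atopp{1\leq j,k\leq n-1}{j\neq k}}\!\!\ep^{kJ}_{jJ'}\big(c_{jk}\vp_J,\vp_{J'}\big)_{\lm}\right)
\;+\; O(\|\vp\|_{\lm}^2) \;+\; O_A(\|\tzn\tPsol\vp\|_0^2),
\end{align*}
where the implicit constant in $O(\|\vp\|_{\lm}^2)$ is furnished by Corollary \ref{cor:Garding} and is independent of $A$ (it depends only on the Levi form and on $\lm$ through its $C^2$-norm, which is bounded since $0\leq\lm\leq 1$ and $\lm$ is fixed before $A$).

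The one subtlety — and the only part where the symmetric roles of $\Cp$ and $\Cm$ need to be checked — is the sign of $\xi_{2n-1}$ on the support of $\tpsml$: this is exactly what forces the use of $-T$ in place of $T$ and, via the second half of Theorem \ref{thm: Garding}, produces the $+$ between the diagonal and off-diagonal sums (as opposed to the $-$ in Lemma \ref{lem:T bound lp}). Once this is arranged, every other step — the pseudolocal discard of $(Id-\tPsmla\tPsml)\vp$, the symbol computation for the composition, and the passage from the operator form to the matrix form via the sharp G\aa rding inequality — is identical to the proof of Lemma \ref{lem:T bound lp}.
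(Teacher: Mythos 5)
Your proposal is correct and matches the paper's intended argument: the paper simply states that Lemma \ref{lem:T bound lm} follows ``by the same argument'' as Lemma \ref{lem:T bound lp}, and you have filled that in with exactly the right bookkeeping — passing to the $\Cm$ cone where $-\xi_{2n-1}\gtrsim A$ so that $R=-T$ has nonnegative symbol, replacing the weight $e^{-\lp}$ by $e^{\lm}$, and invoking the second ($(n-1-q)$-eigenvalue) case of Corollary \ref{cor:Garding} to produce the $+$ sign on the off-diagonal sum. The only small inaccuracy is the parenthetical remark that $0\leq\lm\leq1$ bounds the $C^2$-norm of $\lm$; it does not, but this is harmless since the relevant point is only that the constant is independent of $A$, which holds because $\lm$ is fixed before $A$ is chosen.
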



We now review the two local results from \cite{Nic06} that are crucial in proving the basic estimate
Proposition \ref{prop:basic estimate}. Let $(\sjkp)_{j,k=1}^{n-1}$ be the matrix defined by
\[
\sjkp = \frac 12\Big(\Lb_k L_j(\lp)+L_j\Lb_k(\lp)+ \sum_{\ell=1}^{n-1}(d_{jk}^\ell L_\ell(\lp)+\bar d_{kj}^\ell \Lb_\ell(\lp)) \Big) 
+ A_0 c_{jk}.
\]

\begin{prop} \label{prop:local results for Pspl}
Let $\vp\in\Dom(\dbarb)\cap\Dom(\dbarbs)$ be a $(0,q)$-form supported in $U$. Assume that
$\lp$ is a strictly CR-plurisubharmonic function on $(0,q)$-forms with CR-plurisubharmonicity constant $\Alp$.
Then there exists a constant $C$ that is independent of $\Alp$ so that
\[
\Qbp(\tz\Pspl\vp,\tz\Pspl\vp) + C\|\tz\Pspl\vp\|_\lp^2 + O_\lp(\|\tz\tPsol\vp\|_0^2) 
\geq A_\lp \|\tz\Pspl\vp\|_{\lp}^2.
\]
\end{prop}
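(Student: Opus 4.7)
The plan is to combine Lemma \ref{lem: controlling Qbp} and Lemma \ref{lem:T bound lp}, applied to the form $\phi := \tz\Pspl\vp$, and then to extract the factor $A_\lp$ via the multilinear algebra of Appendix \ref{app:multilinear algebra}. Since $\supp\tz\subset U$, the form $\phi$ is supported in $U$; and since $\pspl(\xi)=\psp(\xi/A)$ is supported in a subcone of $\Cp$, $\phi$ has Fourier support in $\Cp$ modulo an infinitely smoothing error. These are precisely the hypotheses that the two lemmas require.

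First I would apply Lemma \ref{lem: controlling Qbp} to $\phi$ and group the terms on the right-hand side according to the pointwise Hermitian matrix $h_{jk}^+ + c_{jk}T$, where
\[
h_{jk}^+ := \tfrac{1}{2}\bigl(\Lb_k L_j\lp + L_j\Lb_k\lp\bigr) + \tfrac{1}{2}\sum_{\ell=1}^{n-1}\bigl(d_{jk}^\ell L_\ell\lp + \bar d_{kj}^\ell \Lb_\ell\lp\bigr),
\]
so that by definition $\sjkp = h_{jk}^+ + A_0\, c_{jk}$. The non-negative piece $(1-\ep')\sum_{J,j}\|\Lb_j\phi_J\|_\lp^2$ can simply be discarded. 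Next I would invoke Lemma \ref{lem:T bound lp} to replace the contribution of the $c_{jk}T$ terms in this bilinear form by $A\,c_{jk}$ terms, at the cost of errors of the form $O(\|\phi\|_\lp^2)$ and $O_A(\|\tz\tPsol\vp\|_0^2)$. The pointwise matrix appearing in the bilinear form is then
\[
h_{jk}^+ + A\,c_{jk} = \sjkp + (A-A_0)\,c_{jk}.
\]
Because $A_0$ is a geometric constant independent of $\lp$, I may fix $A \geq A_0$ once and for all. The extra term $(A-A_0)c_{jk}$ is then positive semi-definite by weak pseudoconvexity, so the sum of any $q$ eigenvalues of the pointwise matrix remains at least $A_\lp$ by the strict CR-plurisubharmonicity of $\lp$ on $(0,q)$-forms.

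The multilinear algebra of Appendix \ref{app:multilinear algebra} converts this pointwise eigenvalue-sum bound into the bilinear-form lower bound $A_\lp\,\|\phi\|_\lp^2$ for the associated Hermitian form on $(0,q)$-forms. Absorbing the $O(\|\phi\|_\lp^2)$ and $O(\|\phi\|_0^2)$ errors into a single term $C\|\phi\|_\lp^2$, and dropping the non-negative $\Lb_j$-terms, yields exactly the inequality claimed. The main obstacle is bookkeeping: every commutator in Lemma \ref{lem: controlling Qbp} and every microlocal error in Lemma \ref{lem:T bound lp} has to be checked to depend only on finitely many derivatives of $\lp$ (and possibly on $A$), but never on $A_\lp$, since the whole strategy hinges on separating the principal term $A_\lp\|\phi\|_\lp^2$ from an $A_\lp$-independent constant $C$. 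A secondary subtlety is that the microlocal support of $\tz\Pspl\vp$ lies in $\Cp$ only modulo smoothing operators; the resulting infinitely smoothing contributions are $O(\|\vp\|_{-1}^2)$ and will be absorbed further downstream in the proof of Proposition \ref{prop:basic estimate}.
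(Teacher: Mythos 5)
Your proposal is correct and follows essentially the same route as the paper: apply Lemma \ref{lem: controlling Qbp} to $\tz\Pspl\vp$, trade the $c_{jk}T$ terms for $A\,c_{jk}$ via Lemma \ref{lem:T bound lp}, and then use Lemma \ref{lem:multilinear algebra} together with Lemma \ref{lem:multilin for q at a time} to extract $A_\lp\|\tz\Pspl\vp\|_\lp^2$, with the leftover terms absorbed into $C\|\tz\Pspl\vp\|_\lp^2$ and $O_\lp(\|\tz\tPsol\vp\|_0^2)$. The only cosmetic difference is that the paper simply takes $A=A_0$ so the pointwise matrix is exactly $(\sjkp)$, whereas you allow $A\geq A_0$ and discard the positive semi-definite excess $(A-A_0)c_{jk}$ by weak pseudoconvexity; both are fine.
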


\begin{proof}
Since $\vp\in \Dom(\dbarb)\cap\Dom(\dbarbs)$, it follows that $\tz\Pspl\vp\in \Dom(\dbarb)\cap\Dom(\dbarbs)$. Moreover,
$\supp(\tz\Pspl\vp)\subset U'$. 
By Lemma \ref{lem: controlling Qbp},
\begin{align*}
&\Qbp(\tz\Pspl\vp,\tz\Pspl\vp) \geq (1-\ep') \sum_{J\in\I_q'}\sum_{j=1}^{n-1} \|\Lb_j\tz\Pspl\vp_J\|_\lp^2 \\
& + \Rre\Big\{\sum_{J\in\I_q'}\sum_{j\in J}  (c_{jj}T\tz\Pspl\vp_J,\tz\Pspl\vp_{J})_{\lp} 
- \sum_{J,J'\in \I_q'}\sum_{\atopp{1\leq j,k\leq n-1}{j\neq k}} \ep^{kJ}_{jJ'}
(c_{jk}T\tz\Pspl\vp_J,\tz\Pspl\vp_{J'})_{\lp} \Big\} \\ 
\\ & + \frac12 \sum_{J\in\I_q'}\sum_{j\in J}\Big[ 
\big( (\Lb_j L_j(\lp)+L_j\Lb_j(\lp))\tz\Pspl\vp_J,\tz\Pspl\vp_{J})_\lp \big)\\
&\hspace{2in} + \sum_{\ell=1}^{n-1}\big( (d_{jj}^\ell L_\ell(\lp) 
+ \bar d_{jj}^\ell \bar L_\ell(\lp))\tz\Pspl\vp_J,\tz\Pspl\vp_{J}\big)_\lp\Big] \\
&- \frac 12\sum_{J,J'\in \I_q'}\sum_{\atopp{1\leq j,k\leq n-1}{j\neq k}} \ep^{kJ}_{jJ'}
\bigg[\big( (\Lb_k L_j(\lp)+L_j\Lb_k(\lp))\tz\Pspl\vp_J,\tz\Pspl\vp_{J'})_\lp \big) \\
&\hspace{1in} + \sum_{\ell=1}^{n-1}\big( (d_{jk}^\ell L_\ell(\lp) 
+ \bar d_{kj}^\ell \bar L_\ell(\lp))\tz\Pspl\vp_J,\tz\Pspl\vp_{J'}\big)_\lp
 \bigg] + O(\|\tz\Pspl\vp\|_0^2).
\end{align*}
To control the $T$ terms, we use Lemma \ref{lem:T bound lp}  since $\supp\tz\subset U'$, and
the Fourier transform of $\tz\Pspl\vp$ is supported in $\Cp$ up to a smooth term. Indeed, with $A=A_0$ (and $A_0$ from
the definition of \CRPq), we have
\begin{multline*} 
\Rre\Big\{\sum_{J\in\I_q'}\sum_{j\in J}  (c_{jj}T\tz\Pspl\vp_J,\tz\Pspl\vp_{J})_{\lp} 
- \sum_{J,J'\in \I_q'}\sum_{\atopp{1\leq j,k\leq n-1}{j\neq k}} \ep^{kJ}_{jJ'}
(c_{jk}T\tz\Pspl\vp_J,\tz\Pspl\vp_{J'})_{\lp} \Big\}\\
\geq A_0 \Big[\sum_{J\in\I_q'}\sum_{j\in J}  (c_{jj}\tz\Pspl\vp_J,\tz\Pspl\vp_{J})_{\lp} 
- \sum_{J,J'\in \I_q'}\sum_{\atopp{1\leq j,k\leq n-1}{j\neq k}} \ep^{kJ}_{jJ'}
(c_{jk}\tz\Pspl\vp_J,\tz\Pspl\vp_{J'})_{\lp} \Big] \\
+ O(\|\tz\Pspl\vp\|_\lp^2) + O_\lp(\|\tz\tPsol\vp\|_0^2) 
\end{multline*}
Putting these estimates together, we have
\begin{multline*}
\Qbp(\tz\Pspl\vp,\tz\Pspl\vp) \geq \\ 
\sum_{J\in\I_q'}\sum_{j\in J}  (s_{jj}^+\tz\Pspl\vp_J,\tz\Pspl\vp_{J})_{\lp} 
- \sum_{J,J'\in \I_q'}\sum_{\atopp{1\leq j,k\leq n-1}{j\neq k}} \ep^{kJ}_{jJ'}
(\sjkp\tz\Pspl\vp_J,\tz\Pspl\vp_{J'})_{\lp}  \\
+ O(\|\tz\Pspl\vp\|_\lp^2) + O_\lp(\|\tz\tPsol\vp\|_0^2) ).
\end{multline*}
Recall that $\lp$ is strictly CR-plurisubharmonic on $(0,q)$-forms with 
CR-plurisubharmonicity constant $A_\lp$.
In local coordinates, if $L = \sum_{j=1}^{n-1} \xi_j L_j$, then
\[
\Big\la \frac12\big(\p_b\dbarb\lp - \dbarb\p_b\lp\big) + A_0 d\gamma, L\wedge\Lb\Big\ra
 = \sum_{j,k=1}^{n-1} \sjkp \xi_j\bar\xi_k,
\]
and $(\sjkp)$ is a Hermitian matrix. Therefore, by the multilinear algebra lemmas,
Lemma \ref{lem:multilinear algebra} and Lemma \ref{lem:multilin for q at a time},
\[
\Qbp(\tz\Pspl\vp,\tz\Pspl\vp) + C\|\tz\Pspl\vp\|_\lp^2 + O_\lp(\|\tz\tPsol\vp\|_0^2) 
\geq A_\lp \|\tz\Pspl\vp\|_{\lp}^2.
\]
where the constant $C$ is independent of $A_\lp$.
\end{proof}

Let
\[
\sjkm = \frac 12\Big(\Lb_k L_j(\lm)+L_j\Lb_k(\lm)+ \sum_{\ell=1}^{n-1}(d_{jk}^\ell L_\ell(\lm)+\bar d_{kj}^\ell \Lb_\ell(\lm)) \Big) 
+ A_0 c_{jk}.
\] 

\begin{prop} \label{prop:local results for Psml}
Let $\vp\in\Dom(\dbarb)\cap\Dom(\dbarbs)$ be a $(0,q)$-form supported in $U$. Assume that
$\lm$ is a strictly CR-plurisubharmonic function on $(0,n-1-q)$-forms with CR-plurisubharmonicity constant $\Alm$
Then there exists a constant $C$ that is independent of $\Alm$ so that
\[
\Qbm(\tz\Psml\vp,\tz\Psml\vp) + C\|\tz\Psml\vp\|_\lp^2 + O_\lm(\|\tz\tPsol\vp\|_0^2) 
\geq \Alm \|\tz\Psml\vp\|_{\lm}^2.
\]
\end{prop}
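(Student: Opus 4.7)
The plan is to follow the proof of Proposition \ref{prop:local results for Pspl} step by step, substituting each ``+'' ingredient with its ``$-$'' counterpart. Since $\vp\in\Dom(\dbarb)\cap\Dom(\dbarbs)$ and $\Psml$ is of order zero, the form $\tz\Psml\vp$ remains in $\Dom(\dbarb)\cap\Dom(\dbarbs)$ and is supported in $U'$, so I may apply Lemma \ref{lem:controlling Qbm} to obtain a lower bound of the form
\[
\Qbm(\tz\Psml\vp,\tz\Psml\vp)\geq(1-\ep')\sum_{J,j}\|\Lbam_j\tz\Psml\vp_J\|_\lm^2 + \Rre\{\Sigma_T\} + \tfrac12\,\Sigma_{\mathrm{Hess}} + O(\|\tz\Psml\vp\|_0^2),
\]
where $\Sigma_T$ carries the $-c_{jk}T$ diagonal/off-diagonal terms with the sign pattern $+\ep^{kJ}_{jJ'}$ appropriate to the $n-1-q$ level, and $\Sigma_{\mathrm{Hess}}$ collects the second-derivative and first-order $d^\ell_{jk}$ terms involving $\lm$ that, together with $A_0c_{jk}$, assemble into the matrix $(\sjkm)$.

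Next, I would apply Lemma \ref{lem:T bound lm} with $A=A_0$ to the term $\Sigma_T$: by construction of $\Psml$, the Fourier transform of $\tz\Psml\vp$ is supported modulo smoothing errors in $\Cm$, so the lemma gives
\[
\Rre\{\Sigma_T\}\geq A_0\Big[\sum_{J,j\in J}(c_{jj}\tz\Psml\vp_J,\tz\Psml\vp_J)_\lm + \sum_{J,J',\,j\ne k}\ep^{kJ}_{jJ'}(c_{jk}\tz\Psml\vp_J,\tz\Psml\vp_{J'})_\lm\Big]+O(\|\tz\Psml\vp\|_\lm^2)+O_\lm(\|\tz\tPsol\vp\|_0^2),
\]
with the $O(\|\tz\Psml\vp\|_\lm^2)$ constant independent of $A$. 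Combining this with $\Sigma_{\mathrm{Hess}}$ yields the hermitian form
\[
\sum_{J,j\in J}(\sjkm\tz\Psml\vp_J,\tz\Psml\vp_J)_\lm + \sum_{J,J',\,j\ne k}\ep^{kJ}_{jJ'}(\sjkm\tz\Psml\vp_J,\tz\Psml\vp_{J'})_\lm,
\]
modulo $O(\|\tz\Psml\vp\|_\lm^2)+O_\lm(\|\tz\tPsol\vp\|_0^2)$.

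The final step is to apply the multilinear algebra lemmas (Lemma \ref{lem:multilinear algebra} and Lemma \ref{lem:multilin for q at a time}) in their $(n-1-q)$-symmetric form, which is exactly the form compatible with the $+\ep^{kJ}_{jJ'}$ sign pattern produced above. By construction of $\sjkm$, for any unit vector $L=\sum\xi_jL_j\in\Toz(M)$,
\[
\Big\langle\tfrac12(\p_b\dbarb\lm-\dbarb\p_b\lm)+A_0\,d\gamma,\, L\wedge\Lb\Big\rangle=\sum_{j,k=1}^{n-1}\sjkm\xi_j\bar\xi_k,
\]
so the CR-plurisubharmonicity of $\lm$ on $(0,n-1-q)$-forms guarantees that the sum of any $n-1-q$ eigenvalues of $(\sjkm)$ is at least $\Alm$. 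The multilinear algebra lemma then upgrades the hermitian form above to $\Alm\|\tz\Psml\vp\|_\lm^2$, completing the estimate.

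The main obstacle I expect is bookkeeping on signs and index symmetries: the $\Qbm$ computation in Lemma \ref{lem:controlling Qbm} produces off-diagonal terms with the \emph{opposite} sign convention compared to $\Qbp$, and this sign is precisely what couples with the $(0,n-1-q)$-form version of the multilinear algebra lemma rather than the $(0,q)$-form version. A secondary delicacy is that the absorbed constant $C$ from $O(\|\tz\Psml\vp\|_\lm^2)$ must remain independent of $\Alm$ (equivalently of $A$); this is inherited from the corresponding independence already proved in Lemma \ref{lem:T bound lm}, and the lower-order $d^\ell_{kj}\Lb_\ell$-type terms absorbed via lc/sc only cost constants from the geometry of $M$, not from $\lm$.
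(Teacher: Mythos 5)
Your proposal is correct and follows essentially the same route as the paper: apply Lemma \ref{lem:controlling Qbm} to $\tz\Psml\vp$, control the $T$-terms via Lemma \ref{lem:T bound lm} with $A=A_0$, assemble the Hessian and Levi-form contributions into $(\sjkm)$, and invoke the CR-plurisubharmonicity of $\lm$ on $(0,n-1-q)$-forms together with the multilinear algebra to conclude, with the absorbed constant independent of $\Alm$. The only slip is a citation label: the $+\ep^{kJ}_{jJ'}$ sign pattern requires Lemma \ref{lem:multilin for n-1-q at a time} rather than Lemma \ref{lem:multilin for q at a time}, which is exactly the ``$(n-1-q)$-symmetric form'' you describe, so the substance of your argument is unaffected.
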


\begin{proof} Similarly to the proof of Lemma \ref{prop:local results for Pspl}, we can apply Lemma \ref{lem:controlling Qbm}
to $\tz\Psml\vp$ which gives (for some $1 \gg \ep >0$)
\begin{align*}
&\Qbm(\tz\Psml\vp,\tz\Psml\vp) \geq (1-\ep') \sum_{J\in\I_q'}\sum_{j=1}^{n-1} \|\Lbam_j\tz\Psml\vp_J\|_\lm^2 \\
& + \Rre\Big\{\sum_{J\in\I_q'}\sum_{j\in J}  (c_{jj}(-T)\tz\Psml\vp_J,\tz\Psml\vp_{J})_{\lm} 
+ \sum_{J,J'\in \I_q'}\sum_{\atopp{1\leq j,k\leq n-1}{j\neq k}} \ep^{kJ}_{jJ'}
(c_{jk}(-T)\tz\Psml\vp_J,\tz\Psml\vp_{J'})_{\lm} \Big\} \\ 
\\ & + \frac12 \sum_{J\in\I_q'}\sum_{j\in J}\Big[ 
\big( (\Lb_j L_j(\lp)+L_j\Lb_j(\lp))\tz\Psml\vp_J,\tz\Psml\vp_{J})_\lp \big)\\
&\hspace{2in} + \sum_{\ell=1}^{n-1}\big( (d_{jj}^\ell L_\ell(\lm) 
+ \bar d_{jj}^\ell \bar L_\ell(\lm))\tz\Psml\vp_J,\tz\Psml\vp_{J}\big)_\lm\Big] \\
&+ \frac 12\sum_{J,J'\in \I_q'}\sum_{\atopp{1\leq j,k\leq n-1}{j\neq k}} \ep^{kJ}_{jJ'}
\bigg[\big( (\Lb_k L_j(\lm)+L_j\Lb_k(\lm))\tz\Psml\vp_J,\tz\Psml\vp_{J'})_\lm \big) \\
&\hspace{1in} + \sum_{\ell=1}^{n-1}\big( (d_{jk}^\ell L_\ell(\lm) 
+ \bar d_{kj}^\ell \bar L_\ell(\lm))\tz\Psml\vp_J,\tz\Psml\vp_{J'}\big)_\lm
 \bigg] + O(\|\tz\Psml\vp\|_0^2).
\end{align*}

To control the $T$ terms, we use Lemma \ref{lem:T bound lm}  since $\supp\tz\subset U'$, and
the Fourier transform of $\tz\Psml\vp$ is supported in $\Cm$ up to a smooth term. Indeed, with $A=A_0$ where $A_0$ is
from the definition of CR-plurisubharmonicity on $(0,q)$-forms,
\begin{multline*} 
\Rre\Big\{\sum_{J\in\I_q'}\sum_{j\in J}  (c_{jj}(-T)\tz\Psml\vp_J,\tz\Psml\vp_{J})_{\lm} 
+ \sum_{J,J'\in \I_q'}\sum_{\atopp{1\leq j,k\leq n-1}{j\neq k}} \ep^{kJ}_{jJ'}
(c_{jk}(-T)\tz\Psml\vp_J,\tz\Psml\vp_{J'})_{\lm} \Big\}\\
\geq A_0 \Big[\sum_{J\in\I_q'}\sum_{j\in J}  (c_{jj}\tz\Psml\vp_J,\tz\Psml\vp_{J})_{\lm} 
+ \sum_{J,J'\in \I_q'}\sum_{\atopp{1\leq j,k\leq n-1}{j\neq k}} \ep^{kJ}_{jJ'}
(c_{jk}\tz\Psml\vp_J,\tz\Pspl\vp_{J'})_{\lm} \Big] \\
+ O(\|\tz\Psml\vp\|_\lm^2) + O_\lm(\|\tz\tPsol\vp\|_0^2) 
\end{multline*}
Putting these estimates together, we have
\begin{multline*}
\Qbm(\tz\Psml\vp,\tz\Psml\vp) \geq \\ 
\sum_{J\in\I_q'}\sum_{j\in J}  (s_{jj}^-\tz\Psml\vp_J,\tz\Psml\vp_{J})_{\lm} 
+ \sum_{J,J'\in \I_q'}\sum_{\atopp{1\leq j,k\leq n-1}{j\neq k}} \ep^{kJ}_{jJ'}
(\sjkm\tz\Psml\vp_J,\tz\Psml\vp_{J'})_{\lm}  \\
+ O(\|\tz\Psml\vp\|_\lm^2) + O_\lm(\|\tz\tPsol\vp\|_0^2) ).
\end{multline*}
Recall that $\lm$ is strictly CR-plurisubharmonic on $(0,n-1-q)$-forms with 
CR-plurisubharmonicity constant $A_\lm$.
In local coordinates, if $L = \sum_{j=1}^{n-1} \xi_j L_j$, then
\[
\Big\la \frac12\big(\p_b\dbarb\lm - \dbarb\p_b\lm\big) + A_0 d\gamma, L\wedge\Lb\Big\ra
 = \sum_{j,k=1}^{n-1} \sjkm \xi_j\bar\xi_k,
\]
and $(\sjkm)$ is a Hermitian matrix. Therefore, by the multilinear algebra lemmas,
Lemma \ref{lem:multilinear algebra} and Lemma \ref{lem:multilin for n-1-q at a time},
\[
\Qbp(\tz\Pspl\vp,\tz\Pspl\vp) + C\|\tz\Pspl\vp\|_\lp^2 + O_\lp(\|\tz\tPsol\vp\|_0^2) 
\geq A_\lp \|\tz\Pspl\vp\|_{\lp}^2.
\]
where the constant $C$ is independent of $A_\lp$.
\end{proof}

We are finally ready to prove the basic estimate.
\begin{proof}[Proof. (Basic Estimate -- Proposition \ref{prop:basic estimate})]
From \eqref{eqn:energy form -- dbarb commuted by psi-do}, there exist constants $K$, $K_\pm$ so that if
$A_\pm = \min\{A_\lm,A_\lp\}$, then
\begin{multline*}
K\Qbpm(\vp,\vp) + K_\pm\sumn \|\tzn\tPsoln\zn\vpn\|_0^2 + K'\|\vp\|_0^2 + O_{\pm}(\|\vp\|_{-1}^2) \\
\geq \sumn \Big[ \Qbp(\tzn\Pspln\zn\vpn,\tzn\Pspln\zn\vpn) 
 + \Qbm(\tzn\Psmln\zn\vpn,\tzn\Psmln\zn\vpn) \Big].
\end{multline*}
From Proposition \ref{prop:local results for Pspl} and Proposition \ref{prop:local results for Psml} it follows that
by increasing the size of $K$, $K_\pm$, and $K'$ (where $K'$ does NOT depend on $A$) that
\[
K\Qbpm(\vp,\vp) + K_\pm\sumn \|\tzn\tPsoln\zn\vpn\|_0^2 +  K'\|\vp\|_0^2 + O_{\pm}(\|\vp\|_{-1}^2)
\geq A_\pm \|\vp\|_0^2
\]
\end{proof}

%
\subsection{A Sobolev estimate in the ``elliptic directions"}
For forms whose Fourier transforms are supported up to a smooth term in $\Co$, we have better estimates. 
The following result is the $(0,q)$-form version of 
Lemma 4.18 in \cite{Nic06}.


\begin{lem}\label{lem: Co supported terms are benign}
Let $\vp$ be a $(0,1)$-form supported in $U_\nu$ for some $\nu$ such that up to a smooth term,
$\hat\vp$ is supported in $\tCon$. There exist positive constants $C>1$ and $C_1>0$ independent of $A$ so that
\[
C \Qbpm(\vp,E_\pm\vp) + C_1\|\vp\|_0^2 \geq \|\vp\|_1^2.
\]
\end{lem}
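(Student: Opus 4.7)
The plan is to observe that $\tCon$ is bounded away from the characteristic directions of $\Boxb$, so that $\Boxb$ acts microlocally elliptically on $\vp$. The argument has two steps. First, I would reduce the weighted energy $\Qbpm(\vp,E_\pm\vp)$ to the unweighted energy $\Qbo(\vp,\vp)$ modulo $\|\vp\|_0^2$, with constants independent of $A$. Second, I would apply a sharp G\aa rding-type inequality, exploiting the ellipticity of $\Boxb$ on $\tCon$, to conclude $\|\vp\|_1^2 \lesssim \Qbo(\vp,\vp) + \|\vp\|_0^2$.

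For the reduction, the key is that $\hat\vp$ lives essentially in $\tCon$, which by construction is disjoint from $\Cpn$ and $\Cmn$ on the principal-symbol level. The disjoint-support/kernel-decay argument used in the proof of Lemma \ref{lem:equivalence of norms} shows that $\Pspln\zn\vp$ and $\Psmln\zn\vp$ are smoothing on $\vp$ with bounds independent of $A$. Plugging this into the explicit formula for $F_\pm$, the first and third summands are smoothing, while $\zn\Psolan\tzn^2\Psoln\zn$ acts as the identity on $\vp$ modulo smoothing, so $F_\pm\vp = \vp + R\vp$ with $R$ smoothing on $\vp$; inverting gives $E_\pm\vp = \vp + R'\vp$. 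A parallel application of Lemma \ref{lem: dbarbspm computation} yields $\dbarbspm\vp = \dbarbs\vp + R''\vp$, where the $\tPsplm$- and $\tPsmlm$-terms are smoothing on $\vp$ by the same disjoint-support argument and the $E_A$ remainder is a bounded operator contributing $O_\pm(\|\vp\|_0)$. Using Lemma \ref{lem:equivalence of norms} to exchange the $\pm$-inner product with the unweighted one (with constants independent of $A$), I expect
\[
\Qbpm(\vp, E_\pm\vp) \geq c_1 \Qbo(\vp,\vp) - C_1\|\vp\|_0^2
\]
with $c_1 > 0$ and $C_1$ both independent of $A$.

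For the second step, on $\tCon$ the principal symbol of $\Boxb$ acting on $(0,q)$-forms is bounded below by $c|\xi|^2 I$ for some $c>0$ determined entirely by the (fixed, $A$-independent) angular condition cutting out $\tCon$, since $\tCon$ excludes a conic neighborhood of the characteristic directions $\xi = \pm(0,\dots,0,|\xi|)$. Standard sharp G\aa rding, applied to $\vp$ microlocalized to $\tCon$, yields
\[
\Qbo(\vp,\vp) = (\Boxb\vp,\vp)_0 \geq c_2\|\vp\|_1^2 - C_2\|\vp\|_0^2.
\]
Chaining this with the previous estimate and taking $C > \max(1, 1/(c_1 c_2))$ and $C_1$ correspondingly large proves the lemma.

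The main obstacle is the $A$-uniformity of the error terms in the first step. The crucial input is that the kernel bound established in Lemma \ref{lem:equivalence of norms} provides smoothing estimates with constants independent of $A$ for compositions of pseudodifferential operators whose symbols have disjoint supports; this transfers to uniform-in-$A$ bounds on $E_\pm\vp - \vp$ and on the non-principal part of $\dbarbspm\vp - \dbarbs\vp$, so that all error contributions can be absorbed into $C_1\|\vp\|_0^2$ on the left-hand side.
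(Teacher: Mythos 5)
Your overall strategy (reduce the weighted energy to $\Qbo$, then use microlocal ellipticity of $\Box_b$ on the elliptic cone) is the right one, and your second step is essentially sound: on $\Co$ one has $|\xi|\lesssim|\xi'|$ and $\sigma(\Box_b)(x,\xi)=|\eta(\xi)|^2\,\mathrm{Id}\gtrsim|\xi'|^2\,\mathrm{Id}$ at every form level, which is exactly the mechanism in the proof the paper cites (Lemma 4.18 of \cite{Nic06}; the paper gives no independent proof). The genuine gap is in your first step. You claim that $\tCon$ is disjoint from $\Cpn$ and $\Cmn$ ``on the principal-symbol level,'' so that $\Pspln\zn\vp$ and $\Psmln\zn\vp$ are smoothing and hence $F_\pm\vp=\vp+(\text{smoothing})$, $E_\pm\vp=\vp+(\text{smoothing})$. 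This is false: by construction $\psp$ is supported in $\{\xi_{2n-1}\geq\tfrac12|\xi'|\}$ while $\pso$ is nonzero wherever $(\psp)^2+(\psm)^2<1$, so $\Con$ (and a fortiori $\tCon\supset\Con$) overlaps both $\Cpn$ and $\Cmn$ in the conic wedges $\tfrac12|\xi'|\leq|\xi_{2n-1}|\leq\tfrac34|\xi'|$; the paper says this explicitly (``$\Cp$ and $\Cm$ are disjoint, but both intersect $\Co$ nontrivially''). The kernel-decay argument from Lemma \ref{lem:equivalence of norms} that you invoke rests on disjoint \emph{spatial} supports of $(1-\tzn)$ and $\zn$; it cannot be transferred to the frequency supports here because they are not disjoint. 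Consequently $E_\pm\vp-\vp$ is not smoothing, and your reduction as written collapses.

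The reduction can be rescued, but by a different mechanism: do not try to show $E_\pm\vp\approx\vp$. Instead use the defining identity $(\phi,\psi)_0=\la\phi,E_\pm\psi\rapm$ to write $\la\dbarb\vp,\dbarb E_\pm\vp\rapm=(\dbarb\vp,\dbarb\vp)_0+\la\dbarb\vp,[\dbarb,E_\pm]\vp\rapm$, and note that $[\dbarb,E_\pm]=-E_\pm[\dbarb,F_\pm]E_\pm$ is $L^2$-bounded uniformly in $A\geq1$ (the symbols $\psi^{\bullet}(\xi/A)$ have $\xi$-derivative bounds uniform in $A$, and $0\leq\lp,\lm\leq1$); likewise $\dbarbspm-\dbarbs$ is order zero with $A$-uniform bounds by Lemma \ref{lem: dbarbspm computation}. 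A small-constant/large-constant absorption then gives $\Qbpm(\vp,E_\pm\vp)\geq\tfrac12\Qbo(\vp,\vp)-C\|\vp\|_0^2$ with $C$ independent of $A$, after which your elliptic step (carried out on the quadratic form, inserting the dominating cutoff $\tPsoln$ so that the lower bound of the symbol is only needed on $\tCon$, with the remaining pieces smoothing by the support hypothesis on $\hat\vp$) completes the argument. With that replacement your proof follows the same route as the cited one; as written, however, the disjoint-support claim is a real error, not a presentational shortcut.
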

The proof in \cite{Nic06} also holds  at level $(0,q)$.

We can use 
Lemma \ref{lem: Co supported terms are benign}
to control terms of the form $\|\tzn \Psoln \zn\vpn\|_0^2$.
\begin{prop}\label{prop:controlling the elliptic terms}
For any $\ep>0$, there exists $C_{\ep,\pm}>0$ so that 
\[
\|\tzn \Psoln \zn\vpn\|_0^2 \leq \ep \Qbpm(\vpn,\vpn) + C_{\pm} \|\vpn\|_{-1}^2.
\]
\end{prop}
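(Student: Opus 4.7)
The strategy is to apply the elliptic estimate of Lemma \ref{lem: Co supported terms are benign} to $\psi := \tzn \Psoln \zn \vpn$. Since the symbol of $\Psoln$ is supported in $\Con \subset \tCon$, the Fourier transform of $\psi$ lies in $\tCon$ modulo an infinitely smoothing remainder; hence $\psi$ satisfies the hypothesis of Lemma \ref{lem: Co supported terms are benign} (at form level $q$, as the author remarks), giving
\[
\|\psi\|_1^2 \leq C\,\Qbpm(\psi, E_\pm\psi) + C_1\|\psi\|_0^2.
\]
This is the ellipticity gain: in the ``good'' microlocal region $\Con$, the operator $\Boxbpm$ is elliptic and we recover a full derivative on $\psi$.

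The next step trades this $H^1$-bound for an $H^0$-bound with a small constant. By the interpolation inequality $\|\psi\|_0^2 \leq \|\psi\|_{-1}\|\psi\|_1$ combined with the small-constant/large-constant bound $\|\psi\|_{-1}\|\psi\|_1 \leq \eta\|\psi\|_1^2 + (4\eta)^{-1}\|\psi\|_{-1}^2$, substituting the elliptic estimate and choosing $\eta < 1/(2C_1)$ to absorb the resulting $\eta C_1\|\psi\|_0^2$ term, one obtains, for any $\delta > 0$,
\[
\|\psi\|_0^2 \leq \delta\,\Qbpm(\psi, E_\pm\psi) + C_{\delta,\pm}\|\psi\|_{-1}^2.
\]

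It remains to transfer these bounds from $\psi$ to $\vpn$. The estimate $\|\psi\|_{-1} \lesssim \|\vpn\|_{-1}$ is immediate from $L^2$-boundedness of the order-zero operator $\tzn\Psoln\zn$. To deal with $\Qbpm(\psi, E_\pm\psi)$, apply Cauchy--Schwarz to the nonnegative sesquilinear form $\Qbpm$ and use $L^2$-boundedness of $E_\pm$, together with the commutator identity in the spirit of Lemma \ref{lem: dbarbspm computation}, to obtain $|\Qbpm(\psi, E_\pm\psi)| \leq C\bigl(\Qbpm(\psi,\psi) + \|\psi\|_0^2\bigr)$. Then commute $\dbarb$ and $\dbarbspm$ past $\tzn\Psoln\zn$: writing $\dbarb\psi = \tzn\Psoln\zn\,\dbarb\vpn + [\dbarb, \tzn\Psoln\zn]\vpn$ and analogously for $\dbarbspm$, and using $L^2$-boundedness of $\tzn\Psoln\zn$ with respect to the weighted norms $\|\cdot\|_\pm$, yields $\Qbpm(\psi,\psi) \lesssim \Qbpm(\vpn,\vpn) + \|[\dbarb,\tzn\Psoln\zn]\vpn\|_0^2 + \|[\dbarbspm,\tzn\Psoln\zn]\vpn\|_0^2$.

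The main obstacle is the treatment of the two order-zero commutators: a naive estimate gives only $\|\vpn\|_0^2$ on the right, which is too strong since our target controls by $\|\vpn\|_{-1}^2$. The resolution is microlocal. The principal symbols of $[\dbarb,\tzn\Psoln\zn]$ and $[\dbarbspm,\tzn\Psoln\zn]$ are supported (modulo smoothing operators whose kernels are $O_\delta(\|\vpn\|_{-1})$) inside $\tCon$, where ellipticity is available. Hence each commutator term can be dominated by $\|\tzn\hat\Psoln\zn\vpn\|_0^2$ for a dominating order-zero operator $\hat\Psoln$ with symbol supported in $\tCon$, plus $O(\|\vpn\|_{-1}^2)$. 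Running the same chain of Steps 1--3 with $\hat\Psoln$ in place of $\Psoln$ and choosing $\delta$ sufficiently small produces the bootstrap closure, absorbing the commutator contributions and leaving exactly $\ep\,\Qbpm(\vpn,\vpn) + C_{\ep,\pm}\|\vpn\|_{-1}^2$ on the right.
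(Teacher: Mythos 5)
Your first two steps (the elliptic estimate of Lemma \ref{lem: Co supported terms are benign} applied to $\psi=\tzn\Psoln\zn\vpn$, followed by interpolation to trade $\|\psi\|_1$ for $\|\psi\|_0$ with a small constant) are fine, and you correctly identify the real difficulty: after commuting $\dbarb$ and $\dbarbspm$ past the order-zero localizer $\tzn\Psoln\zn$, the commutators $[\dbarb,\tzn\Psoln\zn]\vpn$ and $[\dbarbspm,\tzn\Psoln\zn]\vpn$ are only order zero in $\vpn$, so a direct bound produces $\|\vpn\|_0^2$, which the statement does not allow. Your proposed repair, however, does not close. Dominating the commutators by $\|\tzn\hat\Psoln\zn\vpn\|_0^2$ and ``running the same chain'' with $\hat\Psoln$ is circular: the commutators of $\dbarb$ with $\tzn\hat\Psoln\zn$ are supported where the symbol of $\hat\Psoln$ lives, not where it is identically one, so they require yet another, strictly larger dominating operator, and so on. The factor $\delta$ in front of $\Qbpm(\psi,E_\pm\psi)$ does not terminate this regress; to sum the resulting chain of estimates you would need uniform constants for an infinite nested family of cut-off symbols confined to $\tCon$, which forces their derivative bounds to blow up, and you give no argument for such uniformity. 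As written, the key absorption step is asserted (``produces the bootstrap closure'') rather than proved, and this is a genuine gap.

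The paper's proof avoids the order-zero commutator problem entirely, and this is the idea you are missing: write $\|\tzn\Psoln\zn\vpn\|_0^2=\|\Lambda^{-1}\tzn\Psoln\zn\vpn\|_1^2$, localize $\Lambda^{-1}$ as $P=\zn'\Lambda^{-1}$ (with $\zn'\equiv 1$ on $\supp\tzn$; the remainder $(1-\zn')\Lambda^{-1}\tzn$ is infinitely smoothing and goes into $\|\vpn\|_{-1}^2$), apply Lemma \ref{lem: Co supported terms are benign} to $P\psi$, and then move $P$ across the form via Lemma 2.4.2 of Folland--Kohn, $\Qbpm(P\psi,P\psi)=\Rre\Qbpm(\psi,P^{*,\pm}P\psi)+O_\pm(\|\vpn\|_{-1}^2)$. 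Because $P$ has order $-1$ (and $P^{*,\pm}P$ order $-2$), every commutator that arises gains a derivative relative to $\psi$, so the error terms are either $O(\|\vpn\|_{-1}^2)$ outright or of the form $\|\vpn\|_{-1}\cdot\Qbpm(\vpn,\vpn)^{1/2}$ and are absorbed by sc/lc as $\ep\Qbpm(\vpn,\vpn)+C_{\ep,\pm}\|\vpn\|_{-1}^2$; moreover the $\|\cdot\|_0^2$ term from Lemma \ref{lem: Co supported terms are benign} is applied to $P\psi$ and hence is already $O(\|\vpn\|_{-1}^2)$. No bootstrap is needed. If you want to keep your outline, you must replace the dominating-operator iteration by this (or an equivalent) device that inserts a negative-order operator before any commutators are taken.
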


\begin{proof} Observe that
$\|\tzn \Psoln \zn\vpn\|_0^2 = \|\Lambda^{-1} \tzn\Psoln \zn\vpn\|_1^2$. The $(0,q)$-form $\tzn\Psoln \zn\vpn$ is supported
in $\Co$, so Lemma \ref{lem: Co supported terms are benign} applies. Although the range of $\Lambda^{-1}$ is outside
$U_\nu$, we can write $\Lambda^{-1}\tzn = \zn' \Lambda^{-1} \tzn + (1-\zn')\Lambda^{-1}\tzn$ where $\zn'$ is a smooth bump function that is
identically one on the support of  $\tzn$. Then $(1-\zn')\Lambda^{-1}\tzn$ is infinitely smoothing and hence can be absorbed in the
$\|\vp\|_{-1}^2$ term. Let $P = \zn'\Lambda^{-1}$ and $\psi = \tzn\Psoln \zn\vpn$.
By Lemma \ref{lem: Co supported terms are benign} and the fact that $P$ is an order -1 pseudodifferential operator,
\[
\|\Lambda^{-1} \tzn\Psoln \zn\vpn\|_1^2 \leq \|P\psi\|_1^2 +  C\|\vpn\|_{-1}^2
\leq C_1\Qbpm(P\psi,P\psi) +   C\|\vpn\|_{-1}^2.
\]
The adjoint of $P$ is 
$P^{*,\pm} = \zn' \Lambda^{-1}$. Consequently $P-P^{*,\pm}$ is an order -2 pseudodifferential operator, and we can apply 
Lemma 2.4.2 in \cite{FoKo72} to prove
\[
\Qbpm(P\psi,P\psi) = \Rre\Qbpm(\psi,P^{*,\pm}P\psi) + C_{\pm} \|\vpn\|_{-1}^2
\leq \ep \Qbpm(\vpn,\vpn)  + C_{\ep,\pm} \|\vpn\|_{-1}^2.
\]
\end{proof}
The term $ \ep \Qbpm(\vp,\vp)$ could be replaced by $ \ep\|\Boxbpm \vp\|_{-1}^2$ if we had a need for it.

%
%
\section{Existence and Compactness Theorems for the Complex Green Operator}
\label{sec:complex Green operator}

In this section, we use the basic estimate to prove existence and compactness theorems for the complex Green operator.
As always, $M$ is a compact, orientable, weakly pseudoconvex CR-manifold of dimension at least 5, endowed with 
strongly CR-plurisubharmonic functions $\lp$ and $\lm$. 

%
%
\subsection{Closed range for $\Boxbpm$.}

For $1\leq q\leq n-2$, let 
\begin{align*}
\Hpm^q &= \{\vp\in\Dom(\dbarb)\cap\Dom(\dbarbs) : \dbarb\vp=0, \dbarbspm\vp=0\} \\ 
& = \{\vp\in\Dom(\dbar)\cap\Dom(\dbarbs):\Qbpmp=0\}
\end{align*}
be the space of $\pm$-harmonic $(0,q)$-forms.

\begin{lem}\label{lem:Qbpmp controls norm vp}
For $\Apm$ suitably large and $1\leq q \leq n-2$, 
$\Hpm^q$ is finite dimensional and there exists $C$ that does not depend on $\lp$ and $\lm$ so that
for all $(0,q)$-forms $\vp\in\Dom(\dbarb)\cap\Dom(\dbarbs)$ so that $\vp\perp\Hpm^q$ (with respect to  $\la\cdot,\cdot\rapm$).
\begin{equation}\label{eqn:Qbpmp controls norm}
\norm\vp\normpm^2 \leq C\Qbpmp.
\end{equation}
\end{lem}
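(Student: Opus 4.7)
The plan is to combine the basic estimate of Proposition \ref{prop:basic estimate} with the elliptic control of Proposition \ref{prop:controlling the elliptic terms}, using \CRPq and \CRPnq to absorb the bad zero-order term, and then remove the residual $\|\vp\|_{-1}^2$ term on the orthogonal complement by a standard Rellich argument. First I would invoke \CRPq and \CRPnq to select $\lp$ and $\lm$ whose CR-plurisubharmonicity constants $\Alp, \Alm$ satisfy $\Apm = \min\{\Alp,\Alm\} \geq 2K$, where $K$ is the $\lp,\lm$-independent constant in Proposition \ref{prop:basic estimate}. The $K\norm\vp\normpm^2$ term in the basic estimate is then absorbed on the left. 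Next, applying Proposition \ref{prop:controlling the elliptic terms} with $\ep$ small compared to $K_\pm/\Apm$ handles the $\sumn\sum_{J\in\I_q'}\|\tzn\tPsoln\zn\vpn_J\|_0^2$ terms at the cost of enlarging the coefficient in front of $\|\vp\|_{-1}^2$. The outcome is a preliminary estimate
\[
\norm\vp\normpm^2 \leq C\,\Qbpmp + C'\,\|\vp\|_{-1}^2
\]
for all $\vp\in\Dom(\dbarb)\cap\Dom(\dbarbs)$.

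To prove that $\Hpm^q$ is finite dimensional, specialize this estimate to $\vp\in\Hpm^q$, where $\Qbpmp=0$, so $\norm\vp\normpm^2 \leq C'\|\vp\|_{-1}^2$. By Lemma \ref{lem:equivalence of norms}, $\norm\cdot\normpm$ is equivalent to $\|\cdot\|_0$, so the unit ball of $\Hpm^q$ is bounded in $L^2(M)$; by Rellich's compact embedding $L^2(M)\hookrightarrow H^{-1}(M)$ on the compact manifold $M$, it is then precompact in $H^{-1}$, and the reverse bound $\norm\vp\normpm \lesssim \|\vp\|_{-1}$ on $\Hpm^q$ promotes this to precompactness in $\norm\cdot\normpm$. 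Hence $\dim \Hpm^q<\infty$.

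To obtain \eqref{eqn:Qbpmp controls norm} on the orthogonal complement, I would argue by contradiction. Assume no such $C$ exists; then there is a sequence $\{\vp^k\}\subset \Dom(\dbarb)\cap\Dom(\dbarbs)$ with $\vp^k\perp\Hpm^q$, $\norm\vp^k\normpm=1$, and $\Qbpm(\vp^k,\vp^k)\to 0$. By Lemma \ref{lem:equivalence of norms} the sequence is $L^2$-bounded, so Rellich produces an $H^{-1}$-convergent subsequence, which the preliminary estimate upgrades to a $\norm\cdot\normpm$-Cauchy sequence with limit $\vp^\infty$ satisfying $\norm\vp^\infty\normpm=1$ and $\vp^\infty\perp\Hpm^q$. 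Since $\dbarb\vp^k\to 0$ and $\dbarbspm\vp^k\to 0$ in $L^2$, and since both operators are closed, $\vp^\infty\in\Dom(\dbarb)\cap\Dom(\dbarbspm)$ with $\dbarb\vp^\infty = 0$ and $\dbarbspm\vp^\infty = 0$. Thus $\vp^\infty\in\Hpm^q$, contradicting $\vp^\infty\perp\Hpm^q$ with unit norm.

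The main bookkeeping obstacle is ensuring that the final constant $C$ in \eqref{eqn:Qbpmp controls norm} is genuinely independent of $\lp$ and $\lm$: the auxiliary constants $K_\pm$ and $K_\pm'$ in Proposition \ref{prop:basic estimate} do depend on the chosen weights, so one must verify that after absorbing the $K\norm\vp\normpm^2$ term (which uses only $\Apm\geq 2K$ with $K$ universal) and after the contradiction step (which removes the $\|\vp\|_{-1}^2$ term altogether), the surviving coefficient depends only on $M$ and on the fixed choice $\Apm\geq 2K$, not on any further quantities associated with $\lp,\lm$. A minor additional point is that $\dbarbspm$ depends on $\lp,\lm$, but once those weights are fixed this operator is still closed and the contradiction argument goes through unchanged.
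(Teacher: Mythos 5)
Your proposal is correct and follows essentially the same route as the paper's proof: the basic estimate of Proposition \ref{prop:basic estimate} (with $\Apm$ large enough to absorb the weight-independent $K\norm\vp\normpm^2$ term) combined with Proposition \ref{prop:controlling the elliptic terms} to control the $\tPsoln$ terms, Rellich compactness to get finite dimensionality of $\Hpm^q$, and a contradiction/compactness argument—using that $\dbarb$ and $\dbarbspm$ are closed—to eliminate the residual $\|\vp\|_{-1}^2$ term on the orthogonal complement. Two small remarks: the right smallness condition in Proposition \ref{prop:controlling the elliptic terms} is $\ep\lesssim 1/K_\pm$ (the paper later takes $\ep=1/K_\pm$), not $\ep$ small compared to $K_\pm/\Apm$; and the issue you flag about weight-independence of the final constant is present in the paper's own contradiction argument as well, so your treatment matches the source.
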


\begin{proof}
For $\vp\in\Hpm$, we can use Proposition \ref{prop:basic estimate} with $\Apm$ suitably large (to absorb terms) so that
\[
\Apm\norm\vp\normpm^2 \leq C_{\pm}\big( \sumn \|\tzn\Psoln\zm\vpn\|_0^2 + \|\vp\|_{-1}^2\big).
\]
Also, by Proposition \ref{prop:controlling the elliptic terms},
\[
\sumn \|\tzn\Psoln\zm\vpn\|_0^2 \leq C_{\pm} \|\vp\|_{-1}^2.
\]
since $\Qbpm(\vp,\vp)=0$.
The unit ball in $\Hpm\cap L^2(M)$ is compact, and hence finite dimensional.

Assume that \eqref{eqn:Qbpmp controls norm} fails. Then there exists $\vp_k \perp\Hpm$ 
with $\norm\vp_k\normpm =1$ so that
\[
\norm\vp_k\normpm^2 \geq k\Qbpm(\vp_k,\vp_k).
\]
For $k$ suitably large, we can use Proposition \ref{prop:basic estimate} and the above argument to absorb $\Qbpm(\vp_k,\vp_k)$ by
$A_\pm \norm \vp_k\normpm$ to get:
\[
\norm\vp_k\normpm^2 \leq C_{\pm}\|\vp_k\|_{-1}^2.
\]
Since $H^{-1}(M)$ is compact in $L^2(K)$,  there exists a subsequence $\vp_{k_j}$ that converges in $L^2(M)$.
Since $(\Qbpm(\cdot,\cdot) + \norm\cdot\normpm^2)^{1/2}$ is a norm that dominates the $L^2(M)$-norm, there is a further
subsequence that converges in the $(\Qbpm(\cdot,\cdot) + \norm\cdot\normpm^2)^{1/2}$ norm as well. The limit $\vp$
satisfies $\norm\vp\normpm=1$ and $\vp\perp\Hpm$. But from the above inequality, $\vp\in\Hpm$. This is a contradiction
and \eqref{eqn:Qbpmp controls norm} holds.
\end{proof}

Let 
\[
\Hpmp^q = \{\vp\in L^2_{0,q}(M) : \la \vp,\phi\rapm =0,\text{ for all }\phi\in\Hpm^q\}.
\]
On $\Hpmp^q$, define
\[
\Boxbpm = \dbarb\dbarbspm + \dbarbspm\dbarb.
\]
Since $\dbarbspm = E_{\pm}\dbarbs + [\dbarbs,E_{\pm}]$, $\Dom(\dbarbspm) = \Dom(\dbarbs)$. This causes
\[
\Dom(\Boxbpm) = \{\vp\in L^2_{0,q}(M) : \vp\in\Dom(\dbarb)\cap\Dom(\dbarbs),\ \dbarb\vp\in\Dom(\dbarbs),\text{ and }
\dbarbs\vp\in\Dom(\dbarb)\}.
\]

%
%
\subsection{Proof of Theorem \ref{thm:G_q is compact} when $s=0$.}
This subsection is devoted the proof of Theorem \ref{thm:G_q is compact} when $s=0$, i.e., the $L^2$-case. 

As a consequence of Lemma \ref{lem:Qbpmp controls norm vp}, we may apply Theorem 1.1.2 in \cite{Hor65} to conclude that
$\dbarb:L^2_{(0,q)}(M) \to L^2_{(0,q+1)}(M)$ and  $\dbarbspm:L^2_{(0,q)}(M)\to L^2_{(0,q-1)}(M)$ have closed range. However, by
Theorem 1.1.1 in \cite{Hor65}, this also means that 
$\dbarb:L^2_{(0,q-1)}(M) \to L^2_{(0,q)}(M)$ and  $\dbarbspm:L^2_{(0,q+1)}(M)\to L^2_{(0,q)}(M)$ have closed range (and satisfy
the appropriate $L^2$ inequality with a constant that does NOT depend on $\lp$ or $\lm$).
Again by Lemma \ref{lem:Qbpmp controls norm vp}, Theorem 1.1.1 in \cite{Hor65}, and Lemma
\ref{lem:equivalence of norms}, $\dbarbs$ has closed range  when acting on $L^2_{(0,q)}(M)$ or $L^2_{(0,q+1)}(M)$. 
Therefore, for a $(0,q)$-form $u\in\Dom(\dbarb)\cap\Dom(\dbarbs)$, we have the estimates
\begin{equation}\label{eqn:l2 bound for q}
\|u\|_0^2 \leq C(\|\dbarb u\|_0^2 + \|\dbarbs u\|_0^2 + \| H_q u\|_0^2)
\end{equation}
and
\begin{equation}\label{eqn:l2 bound for qpm}
\|u\|_0^2 \leq C(\Qbpm(u,u) + \| H_{\pm,q} u\|_0^2)
\end{equation}
where $H_q$ is the projection of $u$ onto $\H^q$ and $H_{\pm,q}$ is the projection of $u$ onto $\Hpm^q$.
This implies the existence of $G_q$ and $\Gpmq$ as  bounded
operators on $L^2_{(0,q)}(M)$ that invert $\Boxb$ on $\H^q$ and $\Boxbpm$ on $\Hpm^q$, respectively
(see for example \cite{Shaw85a}, Lemma 3.2 and its proof).
Moreover, the solvability of $\dbarb$ in $L^2_{(0,q)}(M)$ and weighted $L^2_{(0,q)}(M)$ forces
\[
\ker(\dbarb) = \underbrace{\Ran(\dbarb) \oplus \Hpm^q}_{\oplus\text{ with respect to } \la\cdot,\cdot\rapm}
= \underbrace{\Ran(\dbarb) \oplus \H^q}_{\oplus\text{ with respect to }(\cdot,\cdot)_0}.
\]
Consequently, $\H^q$ is finite dimensional.

We now prove that $G_q$ is compact. 
First observe, we have the following identity:
\begin{align*}
G_{q+1} \dbarb u = G_{q+1} \dbarb(\dbarb\dbarbs + \dbarbs\dbarb)G_q u
&= G_{q+1} \dbarb\dbarbs\dbarb G_q u \\
&= G_{q+1}(\dbarb\dbarbs+\dbarbs\dbarb)\dbarb G_q u = \dbarb G_q u.
\end{align*}
Thus,
\[
\dbarb G_q = (\dbarbs G_{q+1})^*.
\]
To prove compactness of $G_q$, it suffices to show compactness on $\Hp^q$ (since $G_q$ is zero on $\H^q$).
When $u\in \Hp^q$, equation \eqref{eqn:l2 bound for q} implies (since $G_q u \in\Hp^q$)
\begin{equation}\label{eqn:Gq decomp}
\|G_q u\|_{0}^2 \les \|\dbarb G_q u\|_0^2 + \|\dbarbs G_q u\|_0^2
 = \|(\dbarbs G_{q+1})^* u\|_0^2 + \|\dbarbs G_q u\|_0^2.
\end{equation}
Therefore, we only need to show that both $\dbarbs G_q$ and
$\dbarbs G_{q+1}$ are compact. 
Our main tool will be a strengthening of \eqref{eqn:l2 bound for qpm}. We claim that
\begin{equation}\label{eqn:almost cptness l2 bound for qpm}
\| u\|_0^2 \leq \frac C\Apm\big( \norm\dbarb u\normpm^2 + \norm\dbarbspm u\normpm^2\big) + C_\pm \|u\|_{-1}^2.
\end{equation}
To prove \eqref{eqn:almost cptness l2 bound for qpm}, we already know the estimate if
$u\in \Hpm^q$, so we can assume that $u\in\Hpmp^q$.
we use Proposition \ref{prop:basic estimate} to see that 
\[
\Apm \norm u \normpm \leq K \Qbpm(u,u) + K_{\pm}\big( \sum_\nu \|\tzn\Psoln\zn u^\nu\|_0^2 + \| u \|_{-1}^2\big).
\]
Thus, to prove \eqref{eqn:almost cptness l2 bound for qpm}, we have to show that 
$K_{\pm} \sum_\nu \|\tzn\Psoln\zn u^\nu\|_0^2$ is well-controlled. 
Using Proposition \ref{prop:controlling the elliptic terms}, we have (with $\ep  = 1/K_\pm$), 
\[
K_{\pm} \sum_\nu \|\tzn\Psoln\zn u^\nu\|_0^2
\leq  \Qbpm(u,u) + K_{\pm}' \| u\|_{-1}^2.
\]
and \eqref{eqn:almost cptness l2 bound for qpm} is proved.

When $\alpha\in \Ran(\dbarb)\subset L^2_{0,q+1}(M)$, 
$\dbarbs G_{q+1}\alpha$ gives the norm minimizing solution to $\dbarb v=\alpha$,
$\alpha\in\Ran(\dbarb)\subset L^2_{0,q+1}(M)$, while $\dbarbspm G_{\pm,q+1}\alpha$ gives a different solution
(the one that minimizes the $\norm\cdot\normpm$-norm). For such $\alpha$, 
\eqref{eqn:almost cptness l2 bound for qpm} therefore implies
\begin{multline} \label{eqn:dbarbs Gq+1 bound}
\|\dbarbs G_{q+1}\alpha\|_0^2 \leq \|\dbarbspm G_{\pm, q+1}\alpha\|_0^2
\leq C \norm\dbarbspm G_{\pm, q+1}\alpha\normpm^2 \\
\leq \frac C{A_\pm}\norm \alpha \normpm + C_{\pm} \|\dbarbspm G_{\pm,q+1} \alpha\|_{-1}^2 
\leq \frac C{A_\pm}\norm \alpha \norm_\pm + C_{\pm} \|\dbarbspm G_{\pm,q+1} \alpha\|_{-1}^2
\end{multline}
Applying Lemma \ref{lem:Qbpmp controls norm vp} to $\dbarbspm G_{\pm,q+1}$ shows that 
$\dbarbspm G_{\pm,q+1}: L^2_{0,q+1}(M) \to L^2_{0,q+1}(M)$ is a bounded operator with $C$ is independent of $A_{\pm}$. Therefore, 
$L^2_{0,q+1}(M)$ embeds compactly in  $W^{-1}_{0,q+1}(M)$. 
Moreover, $A_\pm$ can be made arbitrarily large since $M$ satisfies $(P_q)$ and
$(P_{n-1-q})$.
Equation  \eqref{eqn:dbarbs Gq+1 bound}  proves that
$\dbarbspm G_{\pm,q+1}: L^2_{0,q+1}(M) \to L^2_{0,q}(M)$ continuously, so the map 
$\dbarbspm G_{\pm,q+1}: L^2_{0,q+1}(M) \to
W^{-1}_{0,q}(M)$ is compact, and it follows that $\dbarbs G_{q+1}$ is compact on $\Ran(\dbarb)$ by
\cite{DAngelo02}, Proposition V.2.3. On the orthogonal complement of $\Ran(\dbarb)$, $\dbarbs G_{q+1}=0$, so
$\dbarbs G_{q+1}:L^2_{0,q+1}(M)\to L^2_{0,q+1}(M)$ is compact. To estimate $\dbarbs G_q\alpha$, we cannot
invoke \eqref{eqn:almost cptness l2 bound for qpm} directly because $\dbarbs G_q\alpha$ is a $(q-1)$-form.
Instead, for $\alpha\in\Ran(\dbarb)\subset L^2_{0,q}(M)$,
\begin{align}
&\norm \dbarbspm G_{\pm,q}\alpha\normpm^2 = \la \dbarb\dbarbspm G_{\pm,q}\alpha, G_{\pm,q}\alpha\rapm
= \la \alpha, G_{\pm,q}\alpha \rapm \nn \\
&\leq \frac {2C}{A_{\pm}}\norm\alpha\normpm^2 + \frac{A_{\pm}}{2C} \norm G_{\pm,q}\alpha\normpm^2
\leq \frac{2C}{A_{\pm}}\norm\alpha\normpm^2 + \frac 12 \norm \dbarbspm G_{\pm,q}\alpha\normpm^2 + C_\pm \|G_{\pm,q}\alpha\|_{-1}^2.
\label{eqn:dbarbs Gq bound}
\end{align}
Here we have used that $\dbarb\alpha=0$ and that $\alpha \in \Hpmp^q$ (since $\alpha\in\Ran\dbarb$) in the second inequality.
Also, the first inequality shows that the $\norm \dbarbspm G_{\pm,q}\alpha\normpm^2<\infty$ and thus the term in the final
inequality can be absorbed. Thus we can can prove $\dbarbs G_q L^2_{0,q}(M)\to  L^2_{0,q-1}(M)$ 
is a compact operator by repeating the argument
that follows \eqref{eqn:dbarbs Gq+1 bound} with $G_{\pm,q}$ replacing $\dbarbspm G_{\pm,q+1}$. 

%
%
\subsection{End proof of Theorem \ref{thm:G_q is compact} -- the $s>0$ case.}
Fix $s>0$. Recall that 
compactness  $G_q$ in $L^2_{0,q}(M)$ is equivalent to the
following compactness estimate: for every $\ep>0$, there exists $C_\ep>0$  so that for every $u\in \Dom(\dbarb)\cap\Dom(\dbarbs)$,
\[
\| u \|_{0}^2 \leq \ep(\|\dbarb u\|_0^2 + \|\dbarbs u\|_0^2) + C_\ep \|u\|_{-1}^2.
\]
We claim that this estimate also holds a priori in $H^s$, $s>0$. Indeed,
using the fact that the commutators $[\dbarb,\Lambda^s]$ and $[\dbarbs,\Lambda^s]$ are pseudodifferential operators of order
$s$ (independent of $\ep$), we have
\begin{align*}
\| u \|_s^2 &= \|\Lambda^s u\|_0^2 \leq \ep (\|\dbarb\Lambda^s u\|_0^2 + \|\dbarbs\Lambda^s u\|_0^2) + C_\ep \|\Lambda^s u\|_{-1}^2 \\
&\leq \ep(\|\Lambda^s\dbarb u\|_0^2 + \|\Lambda^s\dbarbs u\|_0^2) + \ep(\|[\dbarb,\Lambda^s] u\|_0^2 + \|[\dbarbs,\Lambda^s] u\|_0^2)
C_\ep \|u\|_{s-1}^2 \\
&\leq \ep(\|\dbarb u\|_s^2 + \|\dbarbs u\|_s^2) + C\ep\|u\|_s^2 + C_\ep\|u\|_{s-1}^2.
\end{align*}
When $\ep<1/2C$, the $C\ep\|u\|_s^2$ can be absorbed into the left-hand side of the equation. 
Thus, we have the estimate that for every $\ep>0$, there exists $C_\ep>0$  so that for every 
$u\in H^s_{0,q}(M)$ with $\dbarb u \in H^s_{0,q+1}(M)$ and $\dbarbs u \in H^s_{0,q-1}(M)$,
\begin{equation}\label{eqn:unregularized cptness estimate in Hs}
\| u \|_{s}^2 \leq \ep(\|\dbarb u\|_s^2 + \|\dbarbs u\|_s^2) + C_\ep \|u\|_{s-1}^2.
\end{equation}

Unlike in $L^2$-case, this estimate does not imply that $G_q$ is compact in $H^s$. The difficulty rests in the fact that while
$u$ may be in $H^s_{0,q}(M)$, we can only say that $G_q u \in L^2_{0,q}(M)$. We need to work with the family of 
regularized operators $G_{\delta, q}$, $0<\delta\leq 1$, arising from the following regularization. Let $\Qbo^\delta(\cdot,\cdot)$
be the quadratic form on $H^1_{0,q}(M)$ defined by
\[
\Qbo^\delta(u,v) = \Qbo(u,v) + \delta Q_L(u,v)
\]
where $Q_L$ is the hermitian inner product associated to the de Rham exterior derivative $d$, i.e.,
$Q_L(u,v) = (du,dv)_0 + (d^*u,d^*v)_0$. The inner product $Q_L$ has form domain
$H^1_{0,q}(M)$. Consequently, $\Qbo^\delta$ gives rise a unique, self-adjoint, elliptic operator $\Box_{b,\delta}$ with
inverse $G_{q,\delta}$. Equivalently, for $u\in L^2_{0,q}(M)$ and $v\in H^1_{0,q}(M)$, $(u,v)_0 = \Qbo^\delta(G_{q,\delta}u,v)$.
By elliptic regularity, we know that if $u\in H^s_{0,q}(M)$, then $G_{q,\delta} u\in H^{s+2}_{0,q}(M)$. We claim that for any 
$\ep>0$, there exists $C_\ep$ so that for any $u\in H^s_{0,q}(M)$,
\begin{equation}\label{eqn:cptness of Gqdelta}
\|G_{q,\delta} u \|_{s}^2 \leq \ep \|u\|_s^2 + C_\ep \|u\|_{s-1}^2,
\end{equation}
where the inequalities are uniform in $0<\delta\leq 1$.
Estimates of the form \eqref{eqn:cptness of Gqdelta} are well known to be equivalent to the compactness of $G_{q,\delta}$ on $H^s_{0,q}(M)$,
(see, for example, \cite{DAngelo02}, Proposition V.2.3).

By the a priori estimate \eqref{eqn:unregularized cptness estimate in Hs},
\[
\|G_{q,\delta} u \|_{s}^2 \leq \ep(\|\dbarb G_{q,\delta}u\|_s^2 + \|\dbarbs G_{q,\delta}u\|_s^2) + C_\ep \|u\|_{s-1}^2.
\]
The $\dbarb$ and $\dbarbs$ terms can be estimated as follows:
\begin{align*}
\|\dbarb G_{q,\delta}u\|_s^2 + \|\dbarbs G_{q,\delta}u\|_s^2
&\leq \Qbo(\Lambda^s G_{q,\delta} u, \Lambda^s G_{q,\delta} u) + C \|G_{q,\delta} u\|_s^2 \\
&\leq \Qbo^\delta(\Lambda^s G_{q,\delta} u, \Lambda^s G_{q,\delta} u) + C \|G_{q,\delta} u\|_s^2 \\
&\leq |(\Lambda^s u, \Lambda^s G_{q,\delta} u)_0| + C \| u\|_s^2,
\end{align*}
where we have used the estimate $\Qbo^\delta(\Lambda^s G_{q,\delta} u, \Lambda^s G_{q,\delta} u)
\leq |(\Lambda^s u, \Lambda^s G_{q,\delta} u)_0|  + C \|G_{q,\delta} u\|_s^2$, which follows from \cite{KoNi65}, Lemma 3.1. Thus,
we have
\[
\| G_{q,\delta} u \|_s^2 \leq \ep (\|G_{q,\delta} u\|_s^2 + \|u\|_s^2) + C_\ep \|u\|_{s-1}^2,
\]
By absorbing terms (and choosing $\ep<1/2$), we have proven (\ref{eqn:cptness of Gqdelta}) with the constant $C_\ep$ 
independent of $\delta$, $0<\delta\leq 1$.

We want to let $\delta\to 0$. If $u\in H^s_{0,q}(M)$, then $\{G_{q,\delta} u: 0<\delta\leq 1\}$ is bounded
in $H^s_{0,q}(M)$. Thus, there exists a sequence $\delta_k\to 0$ and $\tilde u\in H^s_{0,q}(M)$ so that
$G_{q,\delta_n} u \to \tilde u$ weakly in $H^s_{0,q}(M)$. Consequently, if $v\in H^1_{0,q}(M)$, then
\[
\lim_{n\to\infty} \Qbo^{\delta_n}(G_{q,\delta_n}u,v) = \Qbo(\tilde u,v).
\]
However, 
\[
\Qbo^{\delta_n}(G_{q,\delta_n} u, v) = (u,v) = \Qbo(G_q u,v),
\]
so $G_q u = \tilde u$ and \ref{eqn:cptness of Gqdelta} is satisfied with $\delta=0$. Thus, $G_q$ is a compact operator on $H^s_{0,q}(\Omega)$.\
and Theorem \ref{thm:G_q is compact} is proved.


%
%
%
%
%
%
%
%
\appendix
%
%

\section{Multilinear Algebra} \label{app:multilinear algebra}

Some crucial multilinear algebra is contained in the following lemma from Straube \cite{Str09}.
\begin{lem}\label{lem:multilinear algebra}
Let $(\lam_{jk})_{j,k=1}^m(z)$ be an $m\times m$ matrix-valued function and $1\leq q \leq m$. The following are equivalent:
\begin{enumerate}
\item $\displaystyle\sum_{K\in\I_{q-1}} \sum_{j,k=1}^m \lam_{jk}(z) u_{jK} \overline{u_{kK}} \geq A|u|^2\,\quad
\forall u\in\Lambda_z^{(0,q)}$.

\item The sum of any $q$ eigenvalues of $(\lam_{jk}(z))_{j,k}$ is at least $A$.

\item For any orthonormal $\underline{t}^\ell\in\C^m$, $1\leq j\leq q$,
\[
\sum_{\ell=1}^q \lam_{jk}(z) (\underline{t}^\ell)_j \overline{(\underline{t}^\ell)_k} \geq A
\]
\end{enumerate}
\end{lem}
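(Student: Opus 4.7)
The plan is to establish the three equivalences pointwise in $z$, so I fix $z$ and suppress it throughout; note that for (i) to make sense as a real inequality one takes $(\lambda_{jk})$ Hermitian. The strategy is to prove $(\mathrm{i})\Leftrightarrow(\mathrm{ii})$ by diagonalizing $(\lambda_{jk})$ and exploiting the antisymmetry of $(0,q)$-form coefficients, and then to prove $(\mathrm{ii})\Leftrightarrow(\mathrm{iii})$ by invoking the Ky Fan trace-minimum principle for Hermitian matrices.

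For $(\mathrm{i})\Leftrightarrow(\mathrm{ii})$, I would unitarily diagonalize: pick an orthonormal eigenbasis $\{e_1,\ldots,e_m\}$ with eigenvalues $\mu_1,\ldots,\mu_m$ and express $u\in\Lambda^{(0,q)}$ in this basis as $u=\sum_{J\in\mathcal{I}_q}u_J\,\bar\omega^J$, extending $u_{jK}$ antisymmetrically (so $u_{jK}=0$ whenever $j\in K$, and $|u_{jK}|^2=|u_J|^2$ when $J$ is the ordering of $\{j\}\cup K$). Then $|u|^2=\sum_{J\in\mathcal{I}_q}|u_J|^2$, and the left-hand side of (i) collapses to $\sum_{K\in\mathcal{I}_{q-1}}\sum_{j=1}^m \mu_j|u_{jK}|^2$. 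For each fixed $J\in\mathcal{I}_q$ and each $j\in J$ there is exactly one $K\in\mathcal{I}_{q-1}$ with $\{j\}\cup K=J$ as sets, so regrouping yields $\sum_{J\in\mathcal{I}_q}\bigl(\sum_{j\in J}\mu_j\bigr)|u_J|^2$. Since $u$ is arbitrary, (i) holds iff $\sum_{j\in J}\mu_j\geq A$ for every $J\in\mathcal{I}_q$, i.e., iff any $q$ of the $\mu_j$ sum to at least $A$, which is (ii).

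For $(\mathrm{ii})\Leftrightarrow(\mathrm{iii})$, I would apply the Ky Fan variational principle: for a Hermitian matrix $\Lambda=(\lambda_{jk})$ with eigenvalues $\mu_1\leq\cdots\leq\mu_m$,
\[
\min_{\underline{t}^1,\ldots,\underline{t}^q\ \mathrm{o.n.}}\ \sum_{\ell=1}^q\bigl\langle\Lambda\underline{t}^\ell,\underline{t}^\ell\bigr\rangle=\mu_1+\cdots+\mu_q,
\]
the minimum being attained on an orthonormal set of eigenvectors for the $q$ smallest eigenvalues. Consequently (iii) holds iff $\mu_1+\cdots+\mu_q\geq A$, which is the worst case of (ii); conversely, any $q$ eigenvalues arise as $\sum_\ell\langle\Lambda\underline{t}^\ell,\underline{t}^\ell\rangle$ for the corresponding orthonormal eigenvectors, so (iii) implies (ii).

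The only real bookkeeping is in the first step: confirming that the sign ambiguity from the antisymmetric extension of $u_{jK}$ is harmless (it is, because only $|u_{jK}|^2$ appears) and that the combinatorial regrouping picks up each ordered $J$ exactly $q$ times with the correct eigenvalues. Ky Fan's principle is classical, so the second equivalence is essentially immediate once the problem is diagonalized.
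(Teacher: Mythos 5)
Your proof is correct. Note that the paper itself gives no argument for this lemma --- it is quoted from Straube \cite{Str09} --- so there is no internal proof to compare against; your route (unitary diagonalization of the Hermitian matrix, using that the form in (i) is invariant under an orthonormal change of frame and that the antisymmetric coefficients satisfy $|u_{jK}|=|u_J|$, followed by Ky Fan's minimum principle for (ii)$\Leftrightarrow$(iii)) is the standard one and is complete; it also silently repairs the paper's typo in (iii), where the sum over $j,k$ is omitted. Two tiny remarks: the regrouping gives each increasing multiindex $J$ the total weight $\sum_{j\in J}\mu_j$ (your computation says this correctly; the closing sentence ``picks up each ordered $J$ exactly $q$ times'' is just loose phrasing), and it is worth one line to justify the basis-invariance of $\sum_{K}\sum_{j,k}\lambda_{jk}u_{jK}\overline{u_{kK}}$, e.g.\ by observing that it is the quadratic form of the operator induced on $\Lambda^{0,q}$ by $(\lambda_{jk})$ acting as a derivation, which conjugates correctly under the induced unitary.
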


These are Lemma 6.3 and Lemma 6.4 in \cite{Nic06}.

\begin{lem}\label{lem:multilin for q at a time}
Let $(b_{jk})$ be a Hermitian matrix and let $1\leq q\leq n-2$. Then then $\binom{n-1}q$ by $\binom{n-1}q$ matrix 
$(B^q_{JJ'})$ given by
\begin{align*}
B^q_{JJ} &= \sum_{j\in J} b_{jj} \\
B^q_{JJ'} &= -\sum_{\atopp{1\leq j,k\leq n-1}{j\neq k}} \ep^{kJ}_{jJ'} b_{jk} \qquad \text{if }J\neq J',
\end{align*}
where $J$ and $J'$ are multiindices, $|J|=|J'|=q$ is also Hermitian. Moreover, the eigenvalues
of $(B^q_{JJ'})$ are sums of the eigenvalues of $(b_{jk})$ taken $q$ at a time.
\end{lem}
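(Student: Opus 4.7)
The plan is to realize $(B^q_{JJ'})$ as the matrix, in a natural orthonormal basis of $\Lambda^q \C^{n-1}$, of the derivation induced on $\Lambda^q \C^{n-1}$ by the self-adjoint operator $B$ on $\C^{n-1}$ whose matrix in the standard basis is $(b_{jk})$. Once this identification is in place, the Hermiticity of $(B^q_{JJ'})$ and the description of its spectrum follow from standard multilinear algebra.

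Let $V = \C^{n-1}$ with standard orthonormal basis $e_1, \dots, e_{n-1}$, define $B \in \mathrm{End}(V)$ by $B e_k = \sum_j b_{jk} e_j$ (so that $B = B^*$ since $(b_{jk})$ is Hermitian), and let $\mathcal{B}_q$ be the derivation on $\Lambda^q V$ given by
\[
\mathcal{B}_q(v_1 \wedge \cdots \wedge v_q) = \sum_{i=1}^q v_1 \wedge \cdots \wedge Bv_i \wedge \cdots \wedge v_q.
\]
Equip $\Lambda^q V$ with the induced Hermitian inner product, so that $\{e_J : J \in \I_q\}$ (where throughout this appendix $\I_q$ denotes ordered $q$-subsets of $\{1,\dots,n-1\}$) is an orthonormal basis.

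First I would verify that $\mathcal{B}_q$ has matrix $(B^q_{JJ'})$ in this basis. Expanding
\[
\mathcal{B}_q e_J = \sum_{i=1}^q \sum_{m=1}^{n-1} b_{m,j_i}\, e_{j_1} \wedge \cdots \wedge e_m \wedge \cdots \wedge e_{j_q}
\]
and collecting terms: the diagonal coefficient (from $m = j_i$) is $\sum_{j\in J} b_{jj} = B^q_{JJ}$, while the coefficient of $e_{J'}$ for $J' \neq J$ is nonzero only when $J'$ is obtained from $J$ by replacing a single index $j\in J$ with some $k \notin J$. Reordering $e_{j_1} \wedge \cdots \wedge e_k \wedge \cdots \wedge e_{j_q}$ into standard form $\pm e_{J'}$ produces precisely the sign $-\ep^{kJ}_{jJ'}$ by the defining property of the antisymmetrization symbols, matching $B^q_{JJ'}$ as stated. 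Hermiticity of $(B^q_{JJ'})$ is then either read off from self-adjointness of $\mathcal{B}_q$ (the derivation extension of a self-adjoint operator on $V$ is self-adjoint on $\Lambda^q V$) or checked directly: $\overline{B^q_{JJ'}} = -\sum_{j\neq k} \ep^{kJ}_{jJ'}\, b_{kj} = B^q_{J'J}$ after relabeling $j \leftrightarrow k$ and using the symmetry $\ep^{kJ}_{jJ'} = \ep^{jJ'}_{kJ}$.

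Finally, to identify the spectrum, I would choose an orthonormal basis $f_1,\dots,f_{n-1}$ of $V$ diagonalizing $B$ with eigenvalues $\lambda_1,\dots,\lambda_{n-1}$. Then $\{f_J : J \in \I_q\}$ is an orthonormal basis of $\Lambda^q V$, and the derivation property gives
\[
\mathcal{B}_q f_J = \sum_{i=1}^q f_{j_1} \wedge \cdots \wedge B f_{j_i} \wedge \cdots \wedge f_{j_q} = \Bigl(\sum_{j \in J} \lambda_j\Bigr) f_J,
\]
exhibiting the $\binom{n-1}{q}$ sums $\sum_{j\in J} \lambda_j$ with $J \in \I_q$ as the eigenvalues of $\mathcal{B}_q$, hence of $(B^q_{JJ'})$. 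The only genuinely delicate step is the sign identification in the verification that $(B^q_{JJ'})$ represents $\mathcal{B}_q$; this is the main obstacle, but it is a routine antisymmetrization bookkeeping once the convention for $\ep^{kJ}_{jJ'}$ is fixed.
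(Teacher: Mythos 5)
Your argument is correct, and it is the standard proof of this fact: the paper does not prove the lemma itself but cites Lemmas 6.3 and 6.4 of \cite{Nic06}, and your identification of $(B^q_{JJ'})$ with the matrix of the derivation induced by $(b_{jk})$ on $\Lambda^q\C^{n-1}$, followed by diagonalization, is an equivalent self-contained version of that argument. The only caveat is a harmless bookkeeping point: the coefficient of $e_{J'}$ in $\mathcal{B}_q e_J$ is $B^q_{J'J}$ rather than $B^q_{JJ'}$ (i.e.\ you are reading off the transposed entry), but since you verify Hermiticity directly and the spectral conclusion is unaffected by transposition, this does not affect the proof.
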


\begin{lem}\label{lem:multilin for n-1-q at a time}
Let $(d_{jk})$ be a Hermitian matrix and let $1\leq q\leq n-2$. Then then $\binom{n-1}q$ by $\binom{n-1}q$ matrix 
$(D^q_{JJ'})$ given by
\begin{align*}
D^q_{JJ} &= \sum_{j\in J} b_{jj} \\
D^q_{JJ'} &= \sum_{\atopp{1\leq j,k\leq n-1}{j\neq k}} \ep^{kJ}_{jJ'} b_{jk} \qquad \text{if }J\neq J',
\end{align*}
where $J$ and $J'$ are multiindices, $|J|=|J'|=q$ is also Hermitian. Moreover, the eigenvalues
of $(D^q_{JJ'})$ are sums of the eigenvalues of $(d_{jk})$ taken $n-1-q$ at a time, so
$(D^q_{JJ'})$ is positive definite if $(d_{jk})$ is positive definite and $n-1-q>0$;
$(D^q_{JJ'})$ is positive semi-definite if $(d_{jk})$ is positive semi-definite for any $n$.
\end{lem}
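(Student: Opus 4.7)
The plan is to prove this lemma in parallel with Lemma \ref{lem:multilin for q at a time} by exhibiting a Hodge-star-type unitary equivalence between $(D^q_{JJ'})$ and the matrix $B^{n-1-q}$ from that lemma (with $q$ there replaced by $n-1-q$). Hermiticity follows directly: the diagonal entries are real, and for $J\neq J'$,
\[
\overline{D^q_{J'J}} = \sum_{j\neq k}\overline{\ep^{kJ'}_{jJ}}\,\overline{d_{jk}} = \sum_{j\neq k}\ep^{jJ}_{kJ'} d_{kj} = D^q_{JJ'},
\]
using that a permutation and its inverse have equal signs and that $(d_{jk})$ is Hermitian.

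For the eigenvalue claim, let $V = \C^{n-1}$ and define $\Phi\colon \Lambda^q V \to \Lambda^{n-1-q}V$ on basis elements by $\Phi(e_J) = \mathrm{sgn}(J,J^c)\,e_{J^c}$, where $J^c$ is the complement of $J$ in $\{1,\dots,n-1\}$ and $\mathrm{sgn}(J,J^c)$ is the sign of the shuffle sorting the concatenation $(J,J^c)$ into increasing order; $\Phi$ is then unitary. I would verify the intertwining $\Phi\, D^q = B^{n-1-q}\,\Phi$: on the diagonal this matches a sum of $d_{jj}$ over the appropriate complementary index set, and on the off-diagonal it reduces to the combinatorial identity
\[
\mathrm{sgn}(J,J^c)\,\mathrm{sgn}(J',{J'}^c)\,\ep^{kJ}_{jJ'} = -\,\ep^{kJ^c}_{j{J'}^c},
\]
valid whenever $\ep^{kJ}_{jJ'}\neq 0$ with $j\neq k$. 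Once the intertwining is established, $D^q$ and $B^{n-1-q}$ are unitarily equivalent and share the same spectrum; Lemma \ref{lem:multilin for q at a time} applied at level $n-1-q$ then identifies that spectrum with the sums of $n-1-q$ eigenvalues of $(d_{jk})$.

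The main obstacle will be the displayed sign identity. When $\ep^{kJ}_{jJ'}\neq 0$ with $j\neq k$, one has $\{k\}\cup J = \{j\}\cup J'$, which forces $j\in J$, $k\in J'$, and $J\setminus\{j\} = J'\setminus\{k\}$; dually $J^c\cup\{j\} = {J'}^c\cup\{k\}$ with $j\in {J'}^c$ and $k\in J^c$. Expressing each shuffle sign through the positions of the swapped elements and counting the resulting transpositions should produce exactly one extra sign change, which is precisely what converts the ``$-\ep$'' off-diagonals of $B^{n-1-q}$ into the ``$+\ep$'' off-diagonals of $D^q$; this is routine combinatorics, but requires careful bookkeeping. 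The definiteness assertions then follow immediately from the eigenvalue description: if $(d_{jk})$ is positive definite and $n-1-q\geq 1$, every sum of $n-1-q$ of its (positive) eigenvalues is positive, so every eigenvalue of $D^q$ is positive, and the positive semi-definite case is identical.
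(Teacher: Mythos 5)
First, a point of comparison: the paper never proves this lemma --- it simply cites Lemmas 6.3 and 6.4 of \cite{Nic06} --- so you are supplying an argument where the paper defers. Before assessing it, you must confront the fact that the statement as printed is garbled: the $b$'s should be $d$'s, and, more seriously, with the diagonal $D^q_{JJ}=\sum_{j\in J}d_{jj}$ as written the eigenvalue claim is simply false (take $d=\mathrm{diag}(1,2,3)$, $n-1=3$, $q=1$: then $D^1=\mathrm{diag}(1,2,3)$, whose eigenvalues are not the sums $3,4,5$ of pairs of eigenvalues). The intended matrix has diagonal $\sum_{j\notin J}d_{jj}$, i.e.\ $D^q_{JJ'}=\mathrm{tr}(d)\,\delta_{JJ'}-B^q_{JJ'}$, as the $q=1$ remark following the lemma (the matrix $\delta_{jk}\sum_\ell h_{\ell\ell}-h_{jk}$) and the way the quadratic form arises in Lemma \ref{lem:controlling Qbm} and Proposition \ref{prop:local results for Psml} confirm. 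Your proposed intertwining already fails on the diagonal for the literal statement: after conjugating by $\Phi$, the entry at position $J^c$ is $\sum_{j\in J}d_{jj}$, whereas $B^{n-1-q}_{J^cJ^c}=\sum_{j\in J^c}d_{jj}$; your phrase ``matches a sum over the appropriate complementary index set'' silently presupposes the corrected diagonal, and this needs to be said explicitly.

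Second, your key displayed identity is false as written, not just unproved: if $\epsilon^{kJ}_{jJ'}\neq 0$ with $j\neq k$, then $j\in J$ and $k\notin J$, so the list $(k,J^c)$ contains $k$ twice and $\epsilon^{kJ^c}_{j{J'}^c}=0$ identically; the complementary symbol that is actually nonzero is $\epsilon^{j J^c}_{k{J'}^c}$, with the roles of $j$ and $k$ interchanged. This is not innocent bookkeeping: it means $\Phi\,D^q(d)\,\Phi^{-1}$ equals $B^{n-1-q}$ built from the transpose $d^{T}=\overline{d}$ rather than from $d$ (check $n-1=2$, $q=1$: $\Phi D^1\Phi^{-1}=d^{T}$ while $B^1=d$). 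The spectral conclusion survives, since the Hermitian matrices $d$ and $d^{T}$ have the same real eigenvalues, but you must either insert that observation or work with the antiunitary map $u\mapsto\Phi\bar u$ instead of $\Phi$. Your Hermiticity computation is fine. Note finally that once the diagonal is corrected there is a much shorter route that avoids all sign combinatorics: entrywise $D^q=\mathrm{tr}(d)\,I-B^q$, so by Lemma \ref{lem:multilin for q at a time} the eigenvalues of $D^q$ are $\mathrm{tr}(d)-\sum_{j\in J}\mu_j=\sum_{j\notin J}\mu_j$, exactly the sums of the eigenvalues of $(d_{jk})$ taken $n-1-q$ at a time, and both definiteness assertions follow at once.
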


If $q=1$, then Lemma \ref{lem:multilin for n-1-q at a time} says that if
$n\geq 3$ and $H=(h_{jk})$ is a Hermitian, positive definite matrix,
$1\leq i,k\leq n-1$, then $(\delta_{jk}\sum_{\ell=1}^{n-1}h_{\ell\ell}-h_{jk})$ is a Hermitian, positive definite matrix.
The requirement that $n\geq 3$ is the seemingly technical reason that Theorem \ref{thm:G_q is compact} is stated for $2n-1\geq 5$, as
well as the results in \cite{Nic06} and the fact that the work by Kohn and Nicoara in \cite{KoNi06} assumes closed range of $\dbarb$.

\bibliographystyle{alpha}
\bibliography{mybib}

\end{document}